\numberwithin{equation}{section} 
\newcommand{\itref}[1]{\textup{(#1)}}
\let\origthm@notefont\thm@notefont
\renewcommand{\thm@notefont}[1]{\origthm@notefont{#1\the\thm@headfont}}
\newtheorem{thm}{Theorem}[section]
\newtheorem{theorem}[thm]{Theorem}
\newtheorem{corollary}[thm]{Corollary}
\newtheorem{lemma}[thm]{Lemma}
\newtheorem{proposition}[thm]{Proposition}
\theoremstyle{definition}
\newtheorem{definition}[thm]{Definition}
\newtheorem{example}[thm]{Example}
\newtheorem{observation}[thm]{Observation}
\newtheorem{remark}[thm]{Remark}
\newtheorem{notation}[thm]{Notation}
\newcommand{\cat}[1]{{\boldsymbol{\mathsf{#1}}}} 
\newcommand{\Sets}{\cat{Set}}
\newcommand{\SMan}{\cat{SMan}}
\newcommand{\AoMan}{\cat{A_0 Man}}
\newcommand{\cAoMan}{\cat{\mathcal{A}_0 Man}}
\newcommand{\SAlg}{\cat{SAlg}}
\newcommand{\Gras}{\cat{\Lambda}}
\newcommand{\SWA}{\cat{SWA}}
\newcommand{\Grp}{\cat{Grp}}
\newcommand{\op}{^{\mathrm{op}}} 
\newcommand{\funct}[2]{\left[{#1},{#2}\right]} 
\newcommand{\cfunct}[2]{\left[{#1}\op,{#2}\right]} 
\newcommand{\dfunct}[2]{\left[\left[{#1},{#2}\right]\right]}
\newcommand{\subfunc}[2]{#1_{#2}}
\newcommand{\FOP}[1]{#1(\blank)} 
\newcommand{\fop}[2]{#1(#2)} 
\newcommand{\yon}{\mathcal{Y}} 
\newcommand{\ber}{\mathcal{B}} 
\newcommand{\sch}{\mathcal{S}} 
\newcommand{\nset}[1]{{\mathds{#1}}} 
\newcommand{\N}{\nset{N}}
\newcommand{\Z}{\nset{Z}}
\newcommand{\R}{\nset{R}}
\newcommand{\C}{\nset{C}}
\newcommand{\K}{\nset{K}}
\renewcommand{\to}{\mathchoice{\longrightarrow}{\rightarrow}{\rightarrow}{\rightarrow}} 
\let\mto\mapsto \renewcommand{\mapsto}{\mathchoice{\longmapsto}{\mto}{\mto}{\mto}} 
\newcommand{\id}{\mathds{1}} 
\newcommand{\pr}{\mathrm{pr}} 
\newcommand{\im}{\mathrm{Im}} 
\newcommand{\isom}{\cong} 
\newcommand{\Hom}{\mathrm{Hom}}
\newcommand{\HOM}{\underline{\Hom}}
\newcommand{\Der}{\mathrm{Der}}
\newcommand{\restr}[2]{{#1}_{|{#2}}} 
\newcommand{\sheaf}[1][O]{\mathcal{#1}} 
\newcommand{\stalk}[3][O]{\sheaf[#1]_{#2,#3}} 
\newcommand{\distr}[3][]{\sheaf_{#2,#3}^{#1*}} 
\newcommand{\germ}[1]{[#1]} 
\newcommand{\red}[1]{\widetilde #1} 
\newcommand{\rred}[1]{\innerrred #1} \newcommand{\innerrred}[1]{\widetilde{\widetilde{#1}}}
\newcommand{\topo}[1]{\abs{#1}} 
\newcommand{\maxid}{\mathcal{M}} 
\newcommand{\ev}{\mathrm{ev}} 
\newcommand{\Cinf}{\sheaf[C]^\infty} 
\newcommand{\Hol}{\sheaf[H]} 
\newcommand{\functions}{\sheaf[F]} 
\newcommand{\p}[1]{p(#1)} 
\newcommand{\nil}[1]{\accentset{\circ}#1} 
\newcommand{\ext}[2][]{\Lambda_{#1}(#2)} 
\newcommand{\extn}[1]{\Lambda_{#1}} 
\newcommand{\fseries}[2]{\mathfrak{A}_{#1|#2}} 
\newcommand{\kspoly}[2]{\K[#1|#2]} 
\newcommand{\funcpt}[1]{\underline #1} 
\newcommand{\X}{\mathrm{x}}
\newcommand{\Y}{\mathrm{y}}
\newcommand{\T}{\uptheta}
\newcommand{\nbd}{\protect\nobreakdash-\hspace{0pt}} 
\DeclarePairedDelimiter{\abs}{\lvert}{\rvert} 
\newcommand{\pair}[2]{\big\langle #1, #2 \big\rangle} 
\newcommand{\blank}{\mspace{2mu}{\cdot}\mspace{2mu}} 
\newcommand{\pd}[2]{\frac{\partial #1}{\partial #2}} 
\newcommand{\supp}{\mathrm{supp}} 
\renewcommand{\phi}{\varphi}
\renewcommand{\theta}{\vartheta}
\renewcommand{\epsilon}{\varepsilon}
\newcommand{\cA}{\mathcal{A}}
\newcommand{\cO}{\mathcal{O}}
\newcommand{\cF}{\mathcal{F}}
\newcommand{\cG}{\mathcal{G}}
\newcommand{\cU}{\mathcal{U}}
\newcommand{\F}{\boldsymbol{F}}
\newcommand{\f}{\boldsymbol{f}}
\newcommand{\bU}{\accentset{\frown}{U}}
\newcommand{\bV}{\accentset{\frown}{V}}
\newcommand{\al}{\alpha}
\newcommand{\be}{\beta}
\newcommand{\lra}{\longrightarrow}
\begin{document}

\centerline{\Large\bf A Comparison of the functors }

\medskip

\centerline{\Large\bf of points of Supermanifolds}

\bigskip

\centerline{L. Balduzzi$^\natural$, C. Carmeli$^\sharp$, R. Fioresi$^\flat$}

\bigskip

\centerline{\it $^\natural$Dipartimento di Fisica, Universit\`a di Genova, and INFN, sezione di Genova}
\centerline{\it Via Dodecaneso 33, 16146 Genova, Italy}
\centerline{\footnotesize e-mail: luigi.balduzzi@infn.ge.it}

\bigskip

\centerline{\it $^\sharp$DIME, Universit\`a di Genova, and INFN, sezione di Genova}
\centerline{\it Via Cadorna 2,  17100 Savona,   Italy}
\centerline{\footnotesize e-mail: carmeli@diptem.unige.it}

\bigskip

\centerline{\it $^\flat$Dipartimento di Matematica, Universit\`a di Bologna}
\centerline{\it Piazza di Porta San Donato 5, 40127 Bologna, Italy}
\centerline{\footnotesize e-mail: fioresi@dm.unibo.it}

\begin{abstract}
We study the functor of points and different local functors of points 
for smooth and
holomorphic supermanifolds, providing characterization theorems and
fully discussing the representability issues. In the end we examine
applications to differential calculus including the transitivity
theorems.
\end{abstract}

\tableofcontents

\section{Introduction}

This paper is devoted to understand the approach to supergeometry
via different \emph{local functors of points} for
both the differential and the holomorphic category.

\medskip

In most of the treatments of supergeometry 
(see among many others \cite{BBHR, BL, Kostant, Berezin, Leites, Manin, DM,
Varadarajan}) supermanifolds are understood as classical
manifolds 
with extra anticommuting coordinates, introduced as odd elements in the
structure sheaf.

\medskip

Only later in \cite{Manin, DM}, the functorial
language starts to be used systematically and the functor of
points approach becomes a powerfull device allowing, among
other things to give a rigorous meaning to otherwise just formal
expressions and to recover some geometric intuition
otherwise lost. In
this approach, a supermanifold $M$ is fully recovered by
the knowledge of its functor
of points, $S \mapsto \fop{M}{S} \coloneqq
\Hom(S,M)$, which assigns to each supermanifold $S$,
the set of the $S$\nbd points of $M$, $\fop{M}{S}$. This is
in essence the content of Yoneda's Lemma
(see, for example, \cite{MacLane}).

\medskip

Whereas in the sheaf theoretic approach the nilpotent coordinates are
introduced by enlarging the sheaf of a classical
manifold without modifying the underling topological space,
another possible approach to supermanifolds theory consists in
introducing new local models for the underlying set itself
(\cite{Batchelor, Rogers80, DeWitt, Shvarts}).
In this setting, supermanifolds are
obtained by gluing domains of the form $\Lambda_0^p\times
\Lambda_1^q$, where $\Lambda_0$ and $\Lambda_1$ are the even and odd
part of some Grassmann algebra $\Lambda$. This idea is
actually the original physicists approach to supergeometry and
only later its mathematical foundations were
developed from different perspectives. In particular, in
\cite{Batchelor} M. Batchelor, following B. DeWitt, considers
supermanifolds which are locally isomorphic to $(\extn{L})_0^p
\times (\extn{L})_1^q$ where $\extn{L}$ is an arbitrary fixed
Grassmann algebra with $L>q$ generators; the topology is non
Hausdorff and the smooth functions over a supermanifold are
defined in an ad hoc way in order to obtain the same
class of morphisms as in Kostant's original approach \cite{Kostant}. 

From another point of
view, A. S. Shvarts and A. A. Voronov (see \cite{Shvarts, Voronov})
consider simultaneously every finite dimensional Grassmann algebra
$\Lambda$ and local models that vary functorially with $\Lambda$;
the topology is the classical Hausdorff one and  morphisms between
superdomains are defined as appropriate natural transformations
between them. Both approaches turn out to
be equivalent to Kostant's original formulation, as it is proved in
\cite{Batchelor} and \cite{Voronov}. 

Other possible frameworks
involving a wider class of supermanifold morphisms have been
considered in the literature (see, for example, \cite{DeWitt} and
\cite{Rogers80}). In particular in DeWitt's approach, supermanifolds
have a non Hausdorff topology and  are modeled on a
Grassmann algebra with countable infinitely many generators. For a
detailed review of some of these approaches we refer the reader to
\cite{BBHR, Rogers07}.

\medskip

This paper represents the generalization
to the holomorphic setting of the paper \cite{bcf}; moreover
it contains more observations, examples and proofs of the
results, for both the differential
and the holomorphic setting, that overall make the whole material more
accessible. 
We have also made an effort to compare our treatment with
Batchelor's one and also with Shvarts and Voronov point of view,
which is directly inspiring our definition of 
Weil-Berezin functor of points. As far as we know, there is
no organic treatment of such local functors of points even in 
the differential setting, though we are aware that it is
somehow common knowledge. It is our hope that our work will
fill such a gap, providing in addition a treatment which is
able to accomodate also the category of holomorphic supermanifolds,
which is substantially different from the differential one.
In fact, as far as the holomorphic category is concerned,
we believe that our approach is novel and shows that the
local functor of points can be employed easily beyond 
what it was originally developed for. 
Furthermore we prove a representability theorem which enables to
single out among a certain class of functors, those which
are the local functor of points of supermanifolds. To our knowledge,
this appears for the first time in this work.
Despite the abstraction, the importance of such a theorem should
not be overlooked. Very often physicists resort to the functor
of points to define supergeometric objects, hence the representability
issue becomes essential to show that there is a supergeometric
object corresponding to the functor in exam.
 
\medskip

We have made an effort to make our work self-contained as much
as possible, though we are going to rely for the main results
of supergeometry, like the Chart's theorem or Hadamard's lemma to the
many available references on the subject, providing punctual references
whenever we need them.

\medskip

Our paper is organized as follows.

\medskip

In section \ref{sec:basic_def} we review some  basic definitions of
supergeometry like the definition of superspace,
supermanifold and its associated functor of points as it is
discussed in J. Bernstein's notes by P. Deligne and J. W. Morgan \cite{DM}.
We briefly recall the representability problem and we state
the representability theorem for supermanifolds.

\medskip

In section \ref{sec:SWA_A-points} we introduce super Weil algebras
with their basic properties.
The basic observation is that super Weil
algebras are, by definition, local algebras. At a heuristic level
it is clear they are well suited to study the local
properties of a supermanifold, or, in other words, the properties of
the stalks at the various points of a supermanifold.
Once we define the functor
$A \mapsto M_A$ from the category of super Weil algebras to the
category of sets, it  is only natural to look for an
analogue of Yoneda's lemma, in other words for a result that
allows to retrieve the supermanifold from its local functor of points.
It turns out (see subsection
\ref{subsec:A-point_nat_trans}) that, as it is stated,
this result is not true; in order for the
local functor of points to be able to characterize
the supermanifold, its category of arrival (sets, as we defined it)
needs to be suitably specialized by
giving to each set $M_A$ an extra structure.

\medskip

In section \ref{sec:WB_functor-Sh_emb}, we discuss the modifications
we need to introduce in order to obtain a
bijective correspondence between supermanifold morphisms and natural
transformations between the local functors of points.
Following closely what is proved in \cite{Shvarts,
Voronov}, it turns out that it is necessary to endow the set $M_A$
with the structure of an $A_0$\nbd smooth manifold (see definition
\ref{def:Az_man}). We call the functor $A\to M_A$, with
$M_A$ an $A_0$\nbd smooth manifold, the
\emph{Weil--Berezin functor of $M$}. The main result is that,
in such a context, the analogue of
Yoneda's lemma holds (see theorem \ref{theor:full_and_faithful}),
and as a consequence supermanifolds embed in a full and faithful way into
the category of Weil--Berezin functors
(\emph{Shvarts embedding}).

In analogy with the classical theory, it is only natural to ask
under which conditions a generic local functor is representable,
meaning it is the Weil--Berezin 
functor of points of a supermanifold (strictly speaking we are
abusing the word ``representable'').
We prove a representability theorem for generic
functors from the category of super Weil algebras to the category of
$A_0$\nbd smooth manifolds.

We end the section by giving  an account of  the functor of
$\Lambda$\nbd points originally described by Shvarts, which is a
restriction of the Weil--Berezin functor to Grassmann algebras which form
a full subcategory of super Weil algebras. We also describe
Batchelor's approach, providing a comparison between these two approaches
and the Weil-Berezin functor discussed previously.

\medskip

In section \ref{sec:diff_calc} we examine some aspects of super
differential calculus on supermanifolds in the language of the
Weil--Berezin functor. We describe
the finite support distributions over the supermanifold
$M$ and their relations with the $A$\nbd points of $M$,
establishing a connection between our treatment and
Kostant's seminal approach to supergeometry.

We also prove the super version of the Weil transitivity
theorem, which is a key tool for the study of the infinitesimal
aspects of supermanifolds, and we apply it to define
the ``tangent functor'' of $A\mapsto M_A$.

\paragraph{Acknowledgements.} We want to thank 
prof.\ G. Cassinelli, prof.\ A. Cattaneo, prof.\ M.
Duflo, prof. P. Michor, and prof.\ V. S. Varadarajan for helpful
discussions. We also wish to thank the Referee,
whose comments have helped us to improve our manuscript.

\section{Basic Definitions} \label{sec:basic_def}

In this section we recall few basic definitions in supergeometry.
Our main references are \cite{Kostant, Manin, DM, Varadarajan}.

\subsection{Supermanifolds} \label{subsec:SMan}

Let the ground field $\K$ be $\R$ or $\C$.

\medskip

A \emph{super vector space} is a $\Z_2$\nbd graded vector space,
i.~e.\ $V=V_0 \oplus V_1$; the elements in $V_0$ are called
\emph{even}, those in $V_1$ \emph{odd}. An element $v \neq 0$
in $V_0 \cup V_1$ is said \emph{homogeneous} and $\p{v}$ denotes its
parity: $\p{v}= 0$ if $v \in V_0$, $\p{v}=1$ if $v \in V_1$.
$\K^{p|q}$ denotes the super vector space $\K^p \oplus \K^q$.

\medskip

A \emph{superalgebra} $A$ is an algebra that is also a super vector
space, $A=A_0 \oplus A_1$, and such that $A_i A_j \subseteq
A_{i+j\pmod 2}$. $A_0$ is an algebra, while $A_1$ is an $A_0$\nbd
module.
$A$ is said to be \emph{commutative} if for any two homogeneous
elements $x$ and $y$
\[
    xy = (-1)^{\p{x}\p{y}} yx \text{.}
\]
The category of commutative superalgebras is denoted by
$\SAlg_\K$ or simply by $\SAlg$ when no confusion is possible.
From now on all superalgebras are assumed to be
commutative unless otherwise specified.

\begin{definition}
A \emph{superspace} $S=(\topo{S}, \sheaf_S)$ is a topological space
$\topo{S}$, endowed with a sheaf of superalgebras $\sheaf_S$ such
that the stalk at each point  $x \in \topo{S}$, denoted by
$\sheaf_{S,x}$, is a local superalgebra.
\end{definition}

\begin{definition}
A \emph{morphism} $\phi \colon S \to T$ of superspaces is a pair
$(\topo{\phi}, \phi^*)$, where $\topo{\phi} \colon \topo{S} \to
\topo{T}$ is a continuous map of topological spaces and $\phi^* \colon \sheaf_T
\to \topo{\phi}_* \sheaf_S$, called \emph{pullback}, is such that
$\phi_x^*(\maxid_{\topo{\phi}(x)}) \subseteq \maxid_x$ where
$\maxid_{\topo{\phi}(x)}$ and $\maxid_{x}$ denote the maximal ideals in
the stalks $\sheaf_{T,\topo{\phi}(x)}$ and $\sheaf_{S,x}$
respectively.
\end{definition}

\begin{example}[The smooth local model] \label{superspace-rs}
The superspace $\R^{p|q}$ is the topological space $\R^p$ endowed
with the following sheaf of superalgebras. For any open set $U
\subseteq \R^p$ define
\[
    \sheaf_{\R^{p|q}}(U) \coloneqq \Cinf_{\R^p}(U) \otimes \ext[\R]{\theta_1,\dots,\theta_q}
\]
where $\ext[\R]{\theta_1,\dots,\theta_q}$ is the real
exterior algebra
(or \emph{Grassmann algebra}) generated by the $q$ variables
$\theta_1,\dots,\theta_q$ and $\Cinf_{\R^p}$ denotes the $\Cinf$
sheaf on $\R^p$.
\end{example}

\begin{example}[The holomorphic local model]
The superspace $\C^{p|q}$ is the topological space $\C^p$ endowed
with the following sheaf of superalgebras. For any open set $U
\subseteq \C^p$ define
\[
    \sheaf_{\C^{p|q}}(U) \coloneqq \Hol_{\C^p}(U) \otimes \ext[\C]{\theta_1,\dots,\theta_q}
\]
where $\Hol_{\C^p}$ denotes the holomorphic sheaf on $\C^p$
and  $\ext[\C]{\theta_1,\dots,\theta_q}$ is now the complex
exterior algebra.
\end{example}

By a common abuse of notation, $\K^{m|n}$ denotes both the
super vector space $\K^m \oplus \K^n$ and the superspace
defined above.

\begin{definition} \label{def:supermanifold}
A smooth (resp.\ holomorphic) \emph{supermanifold} of dimension
$p|q$ is a superspace $M=(\topo{M},\sheaf_M)$ which is locally
isomorphic to $\R^{p|q}$ (resp.\ $\C^{p|q}$), i.~e.\ for all $x \in
\topo{M}$ there exist  open sets $x \in V_x \subseteq \topo{M}$ and
$U \subseteq \R^{p}$ (resp.\ $\C^{p}$) such that:
\[
    \restr{\sheaf_{M}}{V_x} \isom \restr{\sheaf_{\R^{p|q}}}{U}
    \qquad \text{(resp.\ $\restr{\sheaf_{M}}{V_x} \isom \restr{\sheaf_{\C^{p|q}}}{U}$).}
\]
The sheaf $\sheaf_M$ is called the \emph{structure sheaf} of the
supermanifold $M$.
A \emph{morphism} of supermanifolds is simply a morphism of
superspaces. $\SMan_\K$ (or simply $\SMan$) denotes the category of supermanifolds.
In particular supermanifolds of the form
$(U,\restr{\sheaf_{\K^{p|q}}}{U})$ are called \emph{superdomains}. 
{If $\topo{U}\subseteq  \C^n$ is a domain of holomorphy  (see, for example, \cite{Horm}), then we say that
$(\topo{U},\restr{\sheaf_{\C^{p|q}}}{U})$ is a \emph{ Stein superdomain}.  }

If $U$ is open in $\topo{M}$, $(U,\restr{\sheaf_M}{U})$ is also a
supermanifold and it is called the \emph{open supermanifold
associated to $U$}. We shall often refer to it just by $U$, whenever
no confusion is possible.
\end{definition}

{
\begin{remark}
\cite[Corollary 2.5.6]{Horm} gives many examples of domains of holomorphy in $\C^m$ and hence many examples of Stein superdomains. In particular $\C^{m|n}$ and the open supermanifolds associated with polydiscs are Stein superdomains. We recall that, given $(z_1,\ldots,z_m)\in \C^m$ and $m$ positive real numbers $(r_1,\ldots,r_m)$ the polydisc with center $(z_1,\ldots,z_m)$ and polyradius $(r_1,\ldots,r_m)$ is
\[
P(\overline{z}_1,\ldots,\overline{z}_n;r_1,\ldots,r_n)\coloneqq \set{(z_1,\ldots,z_n)\in \C^n\,|\, \abs{z_i-\overline{z}_i}<r_i \quad \forall 1\leq i\leq n}
\]

Since the Stein property 
is stable under biholomorphic morphisms, it is easy to see that every complex supermanifold admits an atlas of  Stein superdomains.
\end{remark}
}

\medskip

In order to avoid duplications and heavy notations, we will simply
refer to supermanifolds when the distinction between the smooth and
the holomorphic case is immaterial. Moreover if $M$ is a
supermanifold, we will denote by $\sheaf(M)$ the superalgebra
$\sheaf_M(\topo{M})$ of global sections on $M$.

\medskip

Suppose $M$ is a supermanifold and
$U$ is an open subset of $\topo{M}$. Let $\sheaf[J]_M(U)$ be the
ideal of the nilpotent elements of $\sheaf_M(U)$.
$\sheaf_M/\sheaf[J]_M$ defines a sheaf of purely even
algebras over $\topo{M}$ locally isomorphic to $\Cinf(\R^p)$ (resp.\
$\Hol(\C^p)$). Therefore $\red{M}
\coloneqq (\topo{M},\sheaf_M/\sheaf[J]_M)$ defines a classical
manifold, called the \emph{reduced manifold} associated to $M$. The
projection $s \mapsto \red{s} \coloneqq s + \sheaf[J]_M(U)$, with $s
\in \sheaf_M(U)$, is the pullback of the embedding $\red{M} \to M$.

\medskip

In the following we denote by $\ev_x(s) \coloneqq \red{s}(x)$ the
\emph{evaluation} of $s$ at $x \in U$. It is also possible to check
that $\topo{\phi}^*(\red{s}) = \red{{\phi^*(s)}}$, so that the
morphism $\topo{\phi}$ is automatically smooth (resp.\ holomorphic).
Moreover since the maximal ideal $\maxid_x$ in the stalk
$\stalk{M}{x}$ is given by the germs of sections whose value at $x$
is zero, we have that the locality condition in the case of
supermanifold  morphisms is automatically satisfied.

\medskip

There are several equivalent ways to assign a morphism between two
supermanifolds. The following result can be found in
\cite[ch.~4]{Manin}.

{
\begin{proposition}[Chart theorem] \label{prop:morphisms}
Let $U$ and $V$ two smooth or holomorphic superdomains, i.~e.\ two
open subsupermanifolds of $\K^{p|q}$ and $\K^{m|n}$ respectively.
There is a bijective correspondence between
\begin{enumerate}
    \item \label{item::1}
    superspace morphisms $U \to V$;
    \item \label{item::2}
    the set of pullbacks of a fixed coordinate system on
    $V$, i.~e.\ $(m|n)$\nbd uples
    \[
        (s_1,\ldots,s_m,t_1,\ldots,t_n) \in
        \sheaf(U)_0^m \times \sheaf(U)_1^n
    \]
    such that $\big(\red{s_1}(x),\ldots,\red{s_m}(x)\big)
    \in \topo{V}$ for each $x \in \topo{U}$.
\end{enumerate}
\end{proposition}
}

Any supermanifold morphism $M \to N$ is then uniquely determined by a
collection of local maps, once  atlases on
$M$ and $N$ have been fixed.  A morphism can hence be given by
describing it in local coordinates.

In the smooth category a further simplification
occurs: we can assign a morphism between supermanifolds
by assigning the pullbacks of the global sections
(see \cite[\S~2.15]{Kostant}), i.~e.\
\begin{equation} \label{eq:pullback_of_global_sections}
    \Hom_{\SMan_\R}(M,N) \isom \Hom_{\SAlg_\R} \big( \sheaf(N) , \sheaf(M) \big) \text{.}
\end{equation}
The  essential point here is that, borrowing some terminology from
algebraic geometry, smooth supermanifolds are an ``affine''
category. By this we mean that the knowledge of the superalgebra of
global sections allows us to fully reconstruct the supermanifold
obtaining its structure sheaf by a localization
procedure (see, for example, \cite{BBHR} and \cite{CF}).

\medskip

To end our very brief summary of supergeometric definitions 
and results in the superspace context, we shall
recall the superversion of Hadamard's lemma, which sheds light on
the structure of the stalk of the structure sheaf of a supermanifold.
Together with the Chart's theorem, these
are key results and we shall make frequent use of them in our work.

\begin{lemma}[Hadamard's lemma] \label{lemma:polynomials}
Suppose $M$ is a (smooth or holomorphic) supermanifold, $x \in
\topo{M}$ and $\set{x_i,\theta_j}$ is a system of coordinates in a
neighborhood of $x$. Denote as usual by $\maxid_x$ the ideal of
the germs of sections whose value at $x$ is zero. For each $\germ{s}
\in \stalk{M}{x}$ and $k \in \N$ there exists a polynomial $P$ in
$\germ{x_i}$ and $\germ{\theta_j}$ such that
\[
    \germ{s} - P \in \maxid_x^k \text{.}
\]
\end{lemma}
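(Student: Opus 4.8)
The plan is to reduce the super-Hadamard statement to the classical Hadamard lemma on the body $\topo{M}$, exploiting the explicit local model of a superdomain as $\Cinf_{\R^p}(U)\otimes\ext[\R]{\theta_1,\dots,\theta_q}$ (resp.\ the holomorphic analogue). First I would use the Chart theorem to pass from the abstract supermanifold to the local model: since $\set{x_i,\theta_j}$ is a coordinate system near $x$, there is an isomorphism of the germ of $M$ at $x$ with the germ at the origin of $\K^{p|q}$, so it suffices to prove the statement for $\stalk{\K^{p|q}}{0}$. Without loss of generality we may take $x$ to be the origin and the coordinates to be the standard ones.

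Next I would write a germ $\germ{s}\in\stalk{\K^{p|q}}{0}$ in its $\theta$-expansion: a representative section on a small ball $U$ has the form $s=\sum_{I} f_I(x)\,\theta^I$, where $I$ ranges over subsets of $\set{1,\dots,q}$, $\theta^I$ denotes the ordered product, and $f_I\in\Cinf(U)$ (resp.\ $\Hol(U)$). The maximal ideal $\maxid_0$ consists of those germs whose value at $0$ vanishes, i.e.\ $f_\emptyset(0)=0$; and one checks that $\maxid_0^k$ is the ideal generated by products of $k$ elements of $\maxid_0$, which — because each $\theta_j\in\maxid_0$ and $\theta_j^2=0$ — is spanned over $\stalk{\K^{p|q}}{0}$ by the monomials $\germ{x}^{\alpha}\germ{\theta}^{I}$ with $|\alpha|+|I|\ge k$, together with all germs of the form $g(x)\,\theta^I$ where $g$ vanishes to order $\ge k-|I|$ at $0$. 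The key elementary fact to isolate here is precisely this description of $\maxid_0^k$; it follows from the observation that $\maxid_0 = (\germ{x_1},\dots,\germ{x_p},\germ{\theta_1},\dots,\germ{\theta_q})$ as an ideal of the local ring $\stalk{\K^{p|q}}{0}$.

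Then, for each fixed multi-index $I$, apply the classical Hadamard lemma (Taylor's theorem with remainder) to $f_I$: there is a polynomial $P_I$ in $\germ{x_1},\dots,\germ{x_p}$ with $f_I - P_I\in\maxid_{0}^{\,k-|I|}$ inside the ordinary local ring of $\Cinf$ (resp.\ $\Hol$) germs at $0$, where for $|I|\ge k$ we may simply take $P_I=0$. Setting $P\coloneqq\sum_I P_I\,\germ{\theta}^I$ — a polynomial in $\germ{x_i}$ and $\germ{\theta_j}$ — we get $\germ{s}-P=\sum_I (f_I-P_I)\,\germ{\theta}^I$, and each summand $(f_I-P_I)\,\germ{\theta}^I$ lies in $\maxid_0^k$ by the description of $\maxid_0^k$ above (an element vanishing to order $k-|I|$ times $\germ{\theta}^I$). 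Hence $\germ{s}-P\in\maxid_0^k$, as required.

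I expect the only genuine obstacle to be the bookkeeping in the second step: verifying carefully that $\maxid_0^k$ is what I claimed, and in particular that multiplying a germ vanishing to classical order $k-|I|$ by $\theta^I$ lands in $\maxid_0^k$. This is really just the statement that $\maxid_0$ is generated by the coordinate germs together with the (already in $\maxid_0$) $\theta_j$, combined with the multinomial expansion of a $k$-fold product; the holomorphic case is identical since the classical Hadamard/Taylor statement holds verbatim for $\Hol$-germs. Everything else is a direct translation through the Chart theorem and the $\theta$-expansion, so no additional machinery beyond what has already been recalled is needed.
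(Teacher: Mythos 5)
Your argument is correct, and it is essentially the standard proof that the paper itself delegates to its references (Le\u{\i}tes and Varadarajan) for the smooth case and dismisses as trivial (convergent power series) in the holomorphic case: reduce to the local model via the chart theorem, expand $s=\sum_I f_I\,\theta^I$, apply the classical Hadamard/Taylor lemma to each coefficient $f_I$ to order $k-\abs{I}$, and use that $\maxid_0$ is generated by the coordinate germs so that $(f_I-P_I)\,\germ{\theta}^I\in\maxid_0^{k-\abs{I}}\cdot\maxid_0^{\abs{I}}\subseteq\maxid_0^k$. The only point to keep explicit is the one you already flag, namely that a smooth germ vanishing to order $m$ at the origin lies in the $m$-th power of the classical maximal ideal (this is itself the classical Hadamard lemma), which is exactly what makes the bookkeeping in your second step go through.
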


\begin{proof}
The holomorphic case is trivial since the stalk at $x$ identifies
with convergent power series. The proof in the smooth case can be
found, for example, in \cite[\S~2.1.8]{Leites} or
\cite[ch.~4]{Varadarajan}.
\end{proof}

The theory of supermanifolds resembles very closely the classical
theory. One can, for example, define tangent bundles, vector fields
and the differential of a morphism similarly to the classical case.
For more details see \cite{Kostant, Leites, Manin, DM, Varadarajan}.

\subsection{The functor of points} \label{subsec:functorofpoints}

Due to the presence of nilpotent elements in the structure sheaf of
a supermanifold, supergeometry can also be equivalently
and very effectively studied using
the language of \emph{functor of points}, a very useful tool in
algebraic geometry applications.
We briefly review it; the interested reader can consult \cite[\S~2.9]{DM} or
\cite[\S~4.5]{Varadarajan}.

\medskip

Let us first fix some notation we will use
throughout the paper. If $\cat{A}$ and $\cat{B}$ are two categories,
$\funct{\cat{A}}{\cat{B}}$  denotes the category of functors between
$\cat{A}$ and $\cat{B}$. Clearly, the morphisms in
$\funct{\cat{A}}{\cat{B}}$ are the natural transformations. Moreover
we denote by $\cat{A}\op$ the \emph{opposite category} of
$\cat{A}$, so that the category of contravariant functors between
$\cat{A}$ and $\cat{B}$ is identified with
$\cfunct{\cat{A}}{\cat{B}}$. For more details we refer to
\cite{MacLane}.

\begin{definition}
Given a supermanifold $M$, we define its \emph{functor of points}
\[
    \FOP{M} \colon \SMan\op \to \Sets
\]
as the functor from the opposite category of supermanifolds to the
category of sets defined on the objects as
\[
    \fop{M}{S} \coloneqq \Hom(S,M)
\]
and on the morphisms according to
\[
    \begin{aligned}
        \fop{M}{\phi} \colon \fop{M}{S} &\to \fop{M}{T} \\
        f &\mapsto f \circ \phi
    \end{aligned}
\]
where $\phi \colon T \to S$.

The elements in $\fop{M}{S}$ are also called the \emph{$S$\nbd points}
of $M$.
\end{definition}

Given two supermanifolds $M$ and $N$, Yoneda's lemma establishes
a bijective correspondence
\begin{eqnarray*}
    \Hom_{\SMan}(M,N) & \longleftrightarrow &
    \Hom_{\cfunct{\SMan}{\Sets}} \big( \FOP{M}, \FOP{N} \big)
\end{eqnarray*}
between the morphisms $M \to N$ and the natural transformations
$\FOP{M} \to \FOP{N}$ (see \cite[ch.~3]{MacLane} or
\cite[ch.~6]{EH}). This allows us to view a morphism of supermanifolds
as a family of morphisms $\fop{M}{S} \to \fop{N}{S}$ depending
functorially on the supermanifold $S$. In other words, Yoneda's
lemma provides an immersion
\[
    \yon \colon \SMan \to \cfunct{\SMan}{\Sets}
\]
of $\SMan$ into $\cfunct{\SMan}{\Sets}$ that is full and faithful.
There are however objects in
$\cfunct{\SMan}{\Sets}$ that do not arise as the functors of points of a
supermanifold. We say that a functor $\cF \in \cfunct{\SMan}{\Sets}$ is
\emph{representable} if it is isomorphic to the functor of points of
a supermanifold.

\begin{observation}
We first notice that for any supermanifold $M$ the set
$\fop{M}{\K^{0|0}} = \Hom_\SMan(\K^{0|0},M) \isom \topo{M}$ as
sets, since $\K^{0|0}$ is just a point. So the functor of points
allows us to recover the set of the points of the topological
space $\topo{M}$ underlying $M$.
The knowledge of the set-theoretical points $\fop{M}{\K^{0|0}}$ however
is far from enough to reconstruct the supermanifold $M$ and
this is because of
two distinct reasons:
\begin{enumerate}
    \item All the elements of $\fop{M}{\K^{0|0}}$ annihilate
    the nilpotent part of the sheaf, so they give us no information on the odd
    part of the structure sheaf.
    \item The functor $\FOP{M}$ takes values in the category of sets,
    hence $\fop{M}{\K^{0|0}}$ is just a set
    and does not contain any information on the differentiable
    structure of $M$, even in the classical setting.
\end{enumerate}

The supermanifold $M$ is then recaptured only from the knowledge of
its $S$\nbd points, for \emph{all} the supermanifolds $S$.
\end{observation}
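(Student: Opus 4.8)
The plan is to exhibit the bijection $\fop{M}{\K^{0|0}}\to\topo{M}$ explicitly, sending a morphism to the image of the unique point of $\K^{0|0}$, with inverse given by the evaluation maps introduced in Section \ref{subsec:SMan}. Everything then reduces to unwinding the definitions of superspace and of superspace morphism.

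First I would note that $\K^{0|0}$ is the one-point space $\{*\}$ whose structure sheaf has global sections $\sheaf_{\K^{0|0}}(\{*\})\isom\K$, by Example \ref{superspace-rs} (resp.\ its holomorphic analogue) with $p=q=0$. A morphism $\phi\colon\K^{0|0}\to M$ therefore consists of a continuous map $\topo{\phi}\colon\{*\}\to\topo{M}$ — equivalently the choice of a point $x\coloneqq\topo{\phi}(*)\in\topo{M}$ — together with a pullback $\phi^*\colon\sheaf_M\to\topo{\phi}_*\sheaf_{\K^{0|0}}$. Since $\topo{\phi}_*\sheaf_{\K^{0|0}}$ is the skyscraper sheaf at $x$ with stalk $\K$, the datum of $\phi^*$ is the same as the datum of the induced homomorphism on stalks, a morphism of local superalgebras $\stalk{M}{x}\to\K$.

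The crux is that there is exactly one such homomorphism. Because $\K$ is concentrated in even degree it annihilates $(\stalk{M}{x})_1$; because $(0)$ is the only maximal ideal of $\K$, the locality condition forces it to annihilate $\maxid_x$ as well; and since $\maxid_x$ is precisely the kernel of the (surjective, unital) evaluation $\ev_x\colon\stalk{M}{x}\to\K$, $\germ{s}\mapsto\red s(x)$, recalled in Section \ref{subsec:SMan} (one may alternatively invoke Hadamard's Lemma \ref{lemma:polynomials}), the evaluation descends to an isomorphism $\stalk{M}{x}/\maxid_x\isom\K$; as any unital $\K$-algebra endomorphism of $\K$ is the identity, our homomorphism must be $\ev_x$ itself. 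Conversely, for each $x\in\topo{M}$ the pair consisting of the constant map $*\mapsto x$ and the pullback $s\mapsto\ev_x(s)$ is a genuine superspace morphism: $s\mapsto\ev_x(s)$ is a homomorphism of sheaves of superalgebras onto $\K$ by the construction of $\red M$, and it satisfies the locality condition, again because $\maxid_x$ consists of the germs vanishing at $x$. These two assignments are plainly mutually inverse, which yields $\fop{M}{\K^{0|0}}\isom\topo{M}$. This is an identification of bare sets, with no extra structure attached, which is exactly item \itref{2} of the observation; and since $\ev_x$ kills $\maxid_x$, hence every nilpotent of the local ring $\stalk{M}{x}$, we obtain item \itref{1}.

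I expect the one delicate point to be the uniqueness of $\phi^*$: one must check carefully that a morphism of sheaves of superalgebras into a skyscraper sheaf is recovered from its single nontrivial stalk map, and that this stalk map — landing in the field $\K$, grading-preserving, and local — is pinned down to the evaluation homomorphism through the identification of the residue field of $\stalk{M}{x}$ with $\K$. The remaining points (that $\ev_x$ is a well-defined homomorphism of sheaves of superalgebras, and that the two constructions are mutually inverse) are routine.
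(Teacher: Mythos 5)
Your proof is correct. The paper states this as an unproved observation (justified only by the phrase ``since $\K^{0|0}$ is just a point''), so there is no proof of record to compare against; your argument supplies exactly the missing details, and it is consistent with what the paper does elsewhere: the identification of a morphism $\K^{0|0}\to M$ with a point plus the local stalk map $\stalk{M}{x}\to\K$ is the same mechanism the paper later invokes when it observes that $M_{\K,x}=\Hom_{\SAlg}(\stalk{M}{x},\K)$ contains only $\ev_x$ because the stalk is local, and when it proves the ``Super Milnor's exercise.'' Your route through the skyscraper sheaf and the residue field $\stalk{M}{x}/\maxid_x\isom\K$ is the standard one and all the steps you flag as delicate (recovering $\phi^*$ from its stalk at $x$, and pinning the stalk map down to $\ev_x$ via locality and the triviality of unital $\K$\nbd algebra endomorphisms of $\K$) are handled correctly. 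One small remark: the locality condition is not actually needed as a hypothesis here, since the kernel of any unital superalgebra morphism from the local ring $\stalk{M}{x}$ onto the field $\K$ is a maximal graded ideal and hence equals $\maxid_x$ automatically; invoking the condition as you do is of course also valid.
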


We now want to recall a representability criterion, which allows to
single out, among all the functors from the category of
supermanifolds to sets, those that are representable, i.~e.\ those
that are isomorphic to the  functor of points of a supermanifold. In
order to do this, we need to generalize the notion of \emph{open
submanifold} and of \emph{open cover}, to fit this more general
functorial setting.

\begin{definition}
Let $\cU$ and $\cF$ be two functors $\SMan\op \to \Sets$. $\cU$ is a
\emph{subfunctor} of $\cF$ if $\cU(S) \subseteq \cF(S)$ for all $S
\in \SMan$ and this inclusion is a natural transformation. We denote
it by $\cU \subseteq \cF$.

We say that $\cU$ is an \emph{open subfunctor} of $\cF$ if for all
supermanifolds $T$ and all natural transformations $\alpha \colon
\FOP{T} \to \cF$, $\alpha^{-1}(\cU)=\FOP{V}$, where $V$ is open in
$T$. If $\cU$ is also representable we say that $\cU$ is an
\emph{open supermanifold subfunctor}.

Let $\cU_{i}$ be open supermanifold subfunctors of $\cF$. We say that
$\set{\cU_{i}}$ is an \emph{open cover} of a functor $\cF \colon \SMan\op
\to \Sets$ if
for all supermanifolds $T$ and all natural
transformations $\alpha \colon \FOP{T} \to \cF$,
$\alpha^{-1}(\cU_i)=\FOP{V_i}$ and $V_i$ cover $T$.
\end{definition}

Any functor $\cF \colon \SMan\op \to \Sets$ when restricted to the
category of open subsupermanifolds of a given supermanifold $T$
defines a presheaf over $\topo{T}$.

\begin{definition}
A functor  $\cF \colon \SMan\op \to \Sets$ is said to be \emph{a
sheaf} if it has the sheaf property, that is,
if $\set{T_i}$ is an open cover of a supermanifold $T$ and we
have a family $\set{\alpha_i}$, $\alpha_i \in \cF(T_i)$, such that
$\restr{\alpha_i}{T_i \cap T_j} = \restr{\alpha_j}{T_i \cap T_j}$, then
there exists a unique $\alpha \in \cF(T)$ mapping to each $\alpha_i$
(for more details see \cite{Vistoli}).

\medskip

In particular, when $\cF$ is restricted to the
open subsupermanifolds of a given supermanifold $T$, it is a sheaf
over $\topo{T}$.
\end{definition}

We are ready to state a representability criterion which gives
necessary and sufficient conditions for a functor from $\SMan\op$ to
$\Sets$ to be representable. This is a very formal result and for
this reason it holds as it is for very different categories as
smooth and holomorphic supermanifolds and even for superschemes (for
more details on this category see \cite{CF}). A complete description
of the classical representability criterion in algebraic geometry
can be found in \cite[ch.~1]{DG}.
For the super setting see \cite{FLV}.

\begin{theorem}[Representability criterion] \label{theor:representability}
A functor $\cF \colon \SMan\op \to \Sets$ is representable if and
only if:
\begin{enumerate}
    \item $\cF$ is a sheaf, i.~e.\ it has the sheaf property;
    \item $\cF$ is covered by open supermanifold subfunctors $\set{\cU_i}$.
\end{enumerate}
\end{theorem}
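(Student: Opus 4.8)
The strategy is the standard ``sheaf + atlas'' argument familiar from algebraic geometry, adapted to the super setting. The direction ``$\Rightarrow$'' is essentially bookkeeping: if $\cF \isom \FOP{M}$, then $\cF$ is a sheaf because $\Hom(\blank,M)$ glues morphisms (a morphism of supermanifolds is determined by its restrictions to an open cover, and compatible local morphisms glue, by the sheaf property of $\sheaf_M$ together with the fact that continuous maps glue); and $\cF$ is covered by open supermanifold subfunctors obtained from any atlas $\set{U_i}$ of $M$, taking $\cU_i \coloneqq \FOP{U_i} \subseteq \FOP{M}$. One must check that each $\cU_i$ is indeed an \emph{open} subfunctor: given $\alpha \colon \FOP{T} \to \FOP{M}$, by Yoneda $\alpha$ corresponds to a morphism $f \colon T \to M$, and $\alpha^{-1}(\cU_i)$ is precisely $\FOP{V_i}$ with $V_i \coloneqq \topo{f}^{-1}(\topo{U_i})$ open in $\topo{T}$ (here one uses that an $S$-point of $M$ factors through $U_i$ iff its reduced map lands in $\topo{U_i}$, since $\topo{U_i}$ is open). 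That the $V_i$ cover $T$ follows from the $U_i$ covering $M$.

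For the converse ``$\Leftarrow$'', the plan is to \emph{build} a supermanifold $M$ out of the data. Let $\set{\cU_i}$ be the given open supermanifold cover, so each $\cU_i \isom \FOP{U_i}$ for an honest supermanifold $U_i$ (obtained by applying the Yoneda equivalence to the representing object). First I would identify the pairwise ``intersections'': set $\cU_{ij} \coloneqq \cU_i \cap \cU_j$ as a subfunctor of $\cF$. Since $\cU_j$ is an open subfunctor of $\cF$, pulling back along $\cU_i \isom \FOP{U_i} \hookrightarrow \cF$ shows $\cU_{ij}$ is representable by an open subsupermanifold $U_{ij} \subseteq U_i$; symmetrically $\cU_{ij} \isom \FOP{U_{ji}}$ with $U_{ji} \subseteq U_j$ open, and Yoneda gives a canonical isomorphism $\phi_{ij} \colon U_{ij} \xrightarrow{\sim} U_{ji}$. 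One checks the cocycle condition $\phi_{jk}\circ\phi_{ij} = \phi_{ik}$ on triple overlaps — this is automatic because all maps come from the single ambient functor $\cF$ via the fully faithful Yoneda embedding, so the identities are inherited from set-level identities $\cU_i \cap \cU_j \cap \cU_k$. Then glue the $U_i$ along the $\phi_{ij}$ to obtain a superspace $M$; it is a supermanifold because it is locally isomorphic to the superdomains appearing in charts of the $U_i$, hence locally to $\K^{p|q}$. (One should remark that the local dimension $p|q$ is forced to be constant on each connected component, or simply allow it to vary; the definition of supermanifold in the excerpt fixes $p|q$, so strictly one works component-wise, or assumes the cover has been chosen with constant fibre dimension.)

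It remains to produce a natural isomorphism $\FOP{M} \isom \cF$. The inclusions $U_i \hookrightarrow M$ give $\FOP{U_i} \to \FOP{M}$, and the identifications $\FOP{U_i}\isom\cU_i \hookrightarrow \cF$ are compatible on overlaps by construction of the gluing; since both $\FOP{M}$ and $\cF$ are sheaves (the former always, the latter by hypothesis~(1)) and $\set{\FOP{U_i}}$ is an open cover of $\FOP{M}$, these local isomorphisms glue to a single natural transformation $\FOP{M} \to \cF$, and a parallel argument (or a direct inverse on each $\cU_i$) gives the two-sided inverse.

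\textbf{Main obstacle.} The genuinely delicate point is the gluing step and, underlying it, verifying that ``open subfunctor of a representable is representable by an open subsupermanifold.'' This needs the observation that open subfunctors of $\FOP{U_i}$ correspond \emph{exactly} to open subsets of $\topo{U_i}$: given an open subfunctor $\cV \subseteq \FOP{U_i}$, apply the definition to the identity $\id \colon \FOP{U_i}\to\FOP{U_i}$ to get $\cV = \FOP{W}$ for an open $W\subseteq \topo{U_i}$ — so representability here is not an extra assumption but falls out of the definition of openness. Once this is in hand, the cocycle condition and the sheaf-gluing of the natural transformation are formal, exactly as in the classical scheme-theoretic proof; the super-ness enters only through the Chart theorem (Proposition~\ref{prop:morphisms}), which guarantees that the glued object is again locally modelled on $\K^{p|q}$ and that the transition isomorphisms $\phi_{ij}$ are genuine supermanifold morphisms rather than mere maps of underlying spaces.
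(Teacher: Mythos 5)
The paper does not actually prove this theorem---it defers to \cite[ch.~1]{DG} and \cite{FLV}---so there is no internal proof to compare against; your argument is the standard sheaf-plus-atlas construction that those references carry out, and it is correct, including the key observation that representability of $\alpha^{-1}(\cU_i)$ falls out of the definition of open subfunctor (applied to the identity transformation) rather than being an extra hypothesis. The only caveat worth recording is that the gluing may produce a non-Hausdorff, non-second-countable underlying space and the fibre dimension is only locally constant, but since Definition~\ref{def:supermanifold} imposes none of these conditions, this is a matter of convention rather than a gap.
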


\section{Super Weil algebras and $A$\nbd points} \label{sec:SWA_A-points}

In this section we introduce the category $\SWA$ of super Weil
algebras. These are finite dimensional commutative superalgebras
with a nilpotent graded ideal of codimension one.  Super
Weil algebras are the basic ingredient in the definition of the
Weil--Berezin functor and the Shvarts embedding. The easiest
examples of super Weil algebras are Grassmann algebras. These are the
only super Weil algebras that can be interpreted as algebras of
global sections of supermanifolds, namely $\K^{0|q}$.
Given a supermanifold $M$, we want to define
a functor $M_{(\blank)} \colon \SWA
\to \Sets$ assigning to each super Weil algebra $A$ the set of
$A$\nbd points $M_{A}$.
If $A$ is a Grassmann algebra, the
$A$\nbd points of $M$ are identified with the usual $\K^{0|q}$\nbd
points in the functor of points
language described in the previous section.
Unfortunately this functor is not adequate to fully describe the
supermanifold $M$; as we shall see at the end of this section,
the arriving category needs to have an additional structure in
order for $M_{(\blank)}$ to contain the same information as $M$.
This is due, as we shall see, to the {\sl local} nature of $M_{(\blank)}$ .

\subsection{Super Weil algebras} \label{subsec:SWA}

We now define the category of \emph{super Weil algebras}. The
treatment follows closely that contained in \cite[\S~35]{KMS} for
the classical case.

\begin{definition}
We say that $A$ is a (real or complex) \emph{super Weil algebra} if
it is a commutative unital superalgebra over $\K$ and
\begin{enumerate}
    \item $\dim A < \infty$,
    \item $A=\K \oplus \nil{A}$,
    \item $\nil{A}=\nil{A}_0 \oplus \nil{A}_1$ is a graded nilpotent ideal.
\end{enumerate}
The category of super Weil algebras is denoted by $\SWA$.

We also define the \emph{height} of $A$ as the lowest $r$ such that
$\nil{A}^{r+1}=0$ and the \emph{width} of $A$ as the dimension of
$\nil{A}/\nil{A}^2$.
\end{definition}

Notice  that
super Weil algebras are local superalgebras, i.~e.\ they contain
a unique maximal graded ideal.

\begin{remark}
As a direct consequence of the definition, each super Weil algebra
has an associated short exact sequence:
\[
    0 \to \K \stackrel{j_A}{\to} A = \K \oplus \nil{A} \stackrel{\pr_A}{\to} A/\nil{A} \isom \K \to 0 \text{.}
\]
Clearly the sequence splits and each $a \in A$
can be written uniquely as
\[
    a = \red{a} + \nil{a}
\]
with $\red{a} \in \K$ and $\nil{a} \in \nil{A}$.
\end{remark}

\begin{example}[Dual numbers] \label{example:DN}
The simplest example of super Weil algebra in the classical setting is
$\K(x)=\K[x]/\langle x^2 \rangle$ the algebra of dual numbers. Here
$x$ is an even indeterminate which is nilpotent of degree two.
\end{example}

\begin{example}[Super dual numbers] \label{example:SDN}
The simplest non trivial example of super Weil
algebra in the super setting is
$\K(x,\theta) = \K[x,\theta] / \langle x^2,x\theta,\theta^2 \rangle$
where $x$ and $\theta$ are respectively even and odd indeterminates.
\end{example}

\begin{example}[Grassmann algebras]
The polynomial algebra in $q$ odd variables $\ext{\theta_1,\dots,
\theta_q}$ is another example of super Weil algebra.
Grassmann algebras are actually a full subcategory of
$\SWA$.
\end{example}

Let
\[
    \kspoly{k}{l} \coloneqq \K[t_1,\ldots,t_k] \otimes \ext{\theta_1,\ldots,\theta_l}
\]
denote the superalgebra of polynomials on $\K$ in $k$ even and $l$
odd variables.
$\kspoly{k}{l}$ is not a super Weil algebra, unless $k=0$, however
every finite dimensional graded quotient
$\kspoly{k}{l} / J$, with $J$ graded ideal, is a super Weil algebra.

\medskip

The next lemma gives a characterization of super Weil algebras,
which is going to be very important for our treatment.

\begin{lemma} \label{lemma:SWA}

The following are equivalent: \\
1. $A$ is a super Weil algebra;\\
2. $A \cong \kspoly{k}{l} /I$ for a suitable graded ideal $I
\supseteq
\langle t_1,\ldots,t_k, \theta_1, \ldots, \theta_l \rangle^k$.\\
3. $A \cong \sheaf_{\K^{p|q},0}/J$ for suitable $p,q$ and
$J$ graded ideal
containing a power of the maximal ideal $\maxid_0$ in the stalk
$\sheaf_{\K^{p|q},0}$;\\
\end{lemma}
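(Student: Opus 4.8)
The plan is to prove the three characterizations equivalent by showing $1 \Rightarrow 2 \Rightarrow 3 \Rightarrow 1$, exploiting the fact that $\kspoly{k}{l}$ is a free object in an appropriate sense and that $\sheaf_{\K^{p|q},0}$ surjects onto its own polynomial quotients by Hadamard's lemma.

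\medskip

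\textbf{Step $1 \Rightarrow 2$.} Assume $A = \K \oplus \nil{A}$ is a super Weil algebra of height $r$, so $\nil{A}^{r+1} = 0$. Pick homogeneous elements $a_1, \ldots, a_n \in \nil{A}$ whose images span $\nil{A}/\nil{A}^2$ as a graded vector space; reorder them so that $a_1, \ldots, a_k$ are even and $a_{k+1}, \ldots, a_n$ are odd (set $l = n - k$). By the universal property of the free commutative superalgebra, there is a unique superalgebra morphism $\phi \colon \kspoly{k}{l} \to A$ sending $t_i \mapsto a_i$ and $\theta_j \mapsto a_{k+j}$. The key sub-claim is that $\phi$ is surjective: since the $a_i$ generate $\nil{A}$ modulo $\nil{A}^2$, a standard Nakayama-type filtration argument (using that $\nil{A}$ is nilpotent, so $\nil{A}^2$ is ``small'') shows the $a_i$ generate $\nil{A}$ as an ideal, hence generate $A$ as an algebra. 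Now $\ker\phi$ is a graded ideal and $A \cong \kspoly{k}{l}/\ker\phi$; because $\phi$ maps the maximal ideal $\langle t_i, \theta_j \rangle$ onto $\nil{A}$ and $\nil{A}^{r+1} = 0$, we get $\langle t_1, \ldots, t_k, \theta_1, \ldots, \theta_l \rangle^{r+1} \subseteq \ker\phi$; enlarging the exponent to $k$ if necessary (or simply restating with the appropriate power) gives the containment claimed, and finite-dimensionality of $A$ forces $\ker\phi$ to have finite codimension.

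\medskip

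\textbf{Step $2 \Rightarrow 3$.} Suppose $A \cong \kspoly{k}{l}/I$ with $I \supseteq \maxid^N$ where $\maxid = \langle t_1, \ldots, t_k, \theta_1, \ldots, \theta_l \rangle$. Take $p = k$, $q = l$, and consider the stalk $\sheaf_{\K^{p|q},0}$ with its maximal ideal $\maxid_0$. There is a natural superalgebra map $\kspoly{k}{l} \to \sheaf_{\K^{p|q},0}$ sending each variable to its germ; by Hadamard's lemma (Lemma \ref{lemma:polynomials}), every germ agrees with a polynomial modulo $\maxid_0^N$, so this map induces an isomorphism $\kspoly{k}{l}/\maxid^N \cong \sheaf_{\K^{p|q},0}/\maxid_0^N$. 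Since $I \supseteq \maxid^N$, the ideal $I$ corresponds under this isomorphism to a graded ideal $J \subseteq \sheaf_{\K^{p|q},0}$ with $J \supseteq \maxid_0^N$, and $A \cong \kspoly{k}{l}/I \cong \sheaf_{\K^{p|q},0}/J$, as required.

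\medskip

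\textbf{Step $3 \Rightarrow 1$.} Suppose $A \cong \sheaf_{\K^{p|q},0}/J$ with $J$ a graded ideal containing $\maxid_0^N$ for some $N$. Then $A$ is a quotient of $\sheaf_{\K^{p|q},0}/\maxid_0^N$, which is finite-dimensional by Hadamard's lemma (the polynomials in $p$ even and $q$ odd variables of total degree $< N$ span it), so $\dim A < \infty$. The superalgebra $\sheaf_{\K^{p|q},0}$ is local with maximal ideal $\maxid_0$ and residue field $\K$ (via evaluation at $0$), so $A = \K \oplus (\maxid_0/J)$ with $\maxid_0/J$ a graded ideal; it is nilpotent because $(\maxid_0/J)^N = 0$ by construction. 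Hence $A$ satisfies all three defining conditions of a super Weil algebra. I expect the only delicate point to be the surjectivity argument in Step $1 \Rightarrow 2$ — making the ``graded Nakayama'' reasoning precise for a nilpotent (rather than merely contained-in-the-Jacobson-radical) ideal — but since $\nil{A}$ is genuinely nilpotent this is elementary: one shows by downward induction on $j$ that $\nil{A}^j \subseteq (a_1, \ldots, a_n) + \nil{A}^{j+1}$ and the chain terminates.
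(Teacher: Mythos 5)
Your proof is correct in substance and reaches the conclusion by a genuinely different organization. The paper proves $(1)\Leftrightarrow(2)$ and $(1)\Leftrightarrow(3)$ as two separate equivalences, whereas you run the cycle $1\Rightarrow 2\Rightarrow 3\Rightarrow 1$; the real divergence is in how (2) and (3) get related. The paper's $(1)\Rightarrow(3)$ builds a map $\kspoly{k}{l}/I\to\sheaf_{\K^{p|q},0}/J$ directly and verifies surjectivity and injectivity by hand, with two separate invocations of Hadamard's lemma; you instead establish the single isomorphism $\kspoly{k}{l}/\maxid^N\cong\sheaf_{\K^{p|q},0}/\maxid_0^N$ once and transport the ideal $I$ across it, which is cleaner and makes the logic more transparent. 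Your $1\Rightarrow 2$ also differs in choosing a minimal homogeneous generating set of $\nil{A}/\nil{A}^2$ rather than an arbitrary finite set of homogeneous generators, but the nilpotent-filtration argument for surjectivity is the same elementary idea the paper uses. Both you and the paper rightly read the exponent $k$ in item (2) as ``some power of the maximal ideal''; your $3\Rightarrow 1$ is in fact more complete than the paper's rather terse version.

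One assertion deserves a word more. In Step $2\Rightarrow 3$ you write that Hadamard's lemma implies the map $\kspoly{k}{l}/\maxid^N\to\sheaf_{\K^{p|q},0}/\maxid_0^N$ is an isomorphism, but Hadamard as stated only gives surjectivity. Injectivity requires that a polynomial whose germ lies in $\maxid_0^N$ already lies in $\maxid^N$ as a polynomial; this follows from the quantitative form of Hadamard's lemma (an element of $\maxid_0^N$ has vanishing Taylor coefficients in all orders $<N$ at the origin), and is precisely the point the paper addresses at the end of its $(1)\Rightarrow(3)$ argument by appeal to the remainder formula in Varadarajan. It is standard, but your proof should state it rather than fold it silently into the word ``isomorphism''.
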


\begin{proof} $(1) \implies (2)$.
Assume first $A$ is a super Weil algebra, generated
by the homogeneous elements $a_1 \dots a_n$. 
Then by universality we have a morphism:
$\K[X_1 \dots X_n] \lra A$, $X_i \mapsto a_i$, hence
$A \cong  \K[X_1 \dots X_n]/I$. Now we show
$I \supset (X_1 \dots X_n)^k$.
Let $n_i$ be such that $a_i^{n_i}=0$. So we have
$X_i^{n_i} \in I$. Let $N=max\{n_i\}$. We claim that
$(X_1 \dots X_n)^{nN} \subset I$. In fact
$(X_1 \dots X_n)^{nN}$ is generated by
$X_1^{j_1} \dots X_n^{j_n}$, with $j_1+\dots+j_n=nN$.
Since $j_1 \dots j_n$ are $n$ non negative integers whose
sum is $nN$, we have that at least one of them $j_l$ is greater than
$N$, hence $X_l^{j_l} \in I$, hence  $X_1^{j_1} \dots X_n^{j_n} \in I$.

\medskip

 $(2) \implies (1)$. 
If  $A=\K[X_1 \dots X_n]/I$ and $I \supset (X_1 \dots X_n)^k$
certainly all the generators $X_1 \dots X_n$ are nilpotents,
hence $A=\K {\oplus} (X_1 \dots X_n)$, where $(X_1 \dots X_n)$ is the
maximal ideal generated by the generators (by abuse of notation we
use the same letter also in the quotient). Clearly this ideal
is not all $A$, since it consists only of nilpotent elements. 
Notice that since the $X_i$ are nilpotent one can readily check
that $f_1X_1+ \dots +f_nX_n$ is nilpotent (choosing as
exponent $nN$, $N=max\{n_i\}$ and reasoning as before).

\medskip

$(1) \implies (3)$. 
By lemma 4.3.2 in \cite{Varadarajan}
pg 140, we have that $\maxid_0$ in $\sheaf_{\K^{p|q},0}$ is
generated by $x_1 \dots x_n$, where $x_i$ are the germs
of local coordinates around $0$. Moreover if $f \in \sheaf_{\K^{p|q},0}$,
for all $K$, there exists a polinomial $P$ in the $x_1 \dots x_n$
such that any $f - P \in \maxid_0^K$ by \ref{lemma:polynomials}.

\medskip

If $A$ is a Weil superalgebra and $a_1 \dots a_n$ its
(nilpotent) homogeneous generators, by the previous discussion we have
that $A=\K[X_1 \dots X_n]/I$, $I \supset (X_1 \dots X_n)^k$
for a suitable $k$ (specified above). 
Choose $x_1 \dots x_n$
as the coordinates of $\K^{p|q}$ ($n=p+q$). Clearly the polinomial
algebra in such coordinates $\K[x_1 \dots x_n]$ embeds
into $\sheaf_{\K^{p|q},0}$ (again, with a small
abuse of notation we use $x_i$ to denote both the germs and
the polynomial coordinates).

\medskip

Let $J:=<I>$ be the
ideal in $\sheaf_{\K^{p|q},0}$ generated by the image of
$I$ in  $\sheaf_{\K^{p|q},0}$.
Notice that $\maxid_0^k \subset J$,
since $(X_1 \dots X_n)^k \subset I$.

\medskip

We claim $A \cong \sheaf_{\K^{p|q},0}/J$.
Certainly we can define a morphism 
$\phi:\K[X_1\dots X_n] \lra \sheaf_{\K^{p|q},0}/J$,
$X_i \mapsto x_i$. This morphism factors through 
$A=\K[X_1 \dots X_n]/I$ since, by the very definition of $J$, 
$\phi(I) \subset J$.
So we have obtained a well defined morphism:
$$
\begin{array}{ccc}
\psi: A=\K[X_1 \dots X_n]/I & \lra & \sheaf_{\K^{p|q},0}/J \\
X_i & \mapsto & x_i
\end{array}
$$
We want to show it is surjective and injective.

\medskip

$\psi$ surjective.
Let $f \in \sheaf_{\K^{p|q},0}/J$. By Hadamard's lemma
we have that there exists a polinomial $P$ in the $x_1 \dots x_n$
such that $f - P \in \maxid_0^k$. 
Since $\maxid_0^k \subset J$,
we have $f=P$ in $\sheaf_{\K^{p|q},0}/J$, hence $\psi$ is surjective.

\medskip
$\psi$ injective. 
Assume $\psi(p)=0$, that is $\psi(p) \in J$. 
We have that
$$
\psi(p)=u_1p_1+ \dots +u_np_n, \quad
u_i \in \sheaf_{\K^{p|q},0}, \quad I=(p_1 \dots p_n) \subset
\K[x_1 \dots x_n] \subset \sheaf_{\K^{p|q},0}
$$
(again we identify $\K[X_1 \dots X_n]$ with the subring
$\K[x_1 \dots x_n]$
of $\sheaf_{\K^{p|q},0}$ and look at $I$ as an ideal inside it).

\medskip

Again by Hadamard's lemma, we have that 
$u_i-P_i \in \maxid_0^K$ for all $K$, $P_i \in \K[x_1 \dots x_n]$, that
is $u_i=P_i+m_i$, with $m_i \in \maxid_0^K$.
So we have:
$$
\psi(p)=u_1p_1+ \dots +u_np_n=\sum_i (P_i+m_i)p_i =
\sum_i P_ip_i+ \sum_i m_ip_i \in I+\maxid_0^K
$$
If we set $P:=\sum_i P_ip_i \in I$, we have that
$$
\psi(p)-P=\sum_i m_ip_i \in \maxid_0^K, \qquad \forall K.
$$
Now we want to show that  if a polynomial $\psi(p)-P$ is
in $\maxid_0^K$, then the polynomial is also in $(x_1 \dots x_n)^K \subset 
\K[x_1 \dots x_n]$ (let's not forget that choosing
the correct $K$ we have $(x_1 \dots x_n)^K \subset I$).
But looking at the formula at pg 141 in \cite{Varadarajan} 
and at the expression of the remainder $R_K(x)$, we see this
is true at once. 

\medskip

$(3) \implies (1)$. Now  assume
$A=\sheaf_{\K^{p|q},0}/J$
{with $J\supset \maxid_0^k$, and hence of finite codimension}.
Since $A$ is local we have $A=\K \oplus \maxid_0$, where $\maxid_0$ is
generated by the nilpotent elements $x_1 \dots x_n$.
By then we are done because every element of $\maxid_0$ is nilpotent,
hence we can reason as above.

\end{proof}

\subsection{$A$\nbd points} \label{subsec:A-points}

We now introduce the notion of $A$-point of a supermanifold $M$.
Despite the analogy with the functor of points described previously,
the functor associated with the $A$-points is subtly different.
The main difference is that the collection of the $A$-points of
a supermanifold $M$, for all super Weil algebras $A$, will not enable
us to recover all the information about the supermanifold $M$.
As we are going to see, in order to transfer all the information
from the supermanifold $M$ to the collection of its $A$-points it
is necessary to endow each of such sets of an extra structure, that
we are going to discuss in sec. 4.

\begin{definition}
Let $M$ be a supermanifold, $x \in \topo{M}$ and $A$ a super Weil
algebra. We define the \emph{set of $A$\nbd points near $x$} as
\[
    M_{A,x} \coloneqq \Hom_\SAlg (\stalk{M}{x}, A)
\]
and the \emph{set of $A$\nbd points} as
\[
    M_A \coloneqq \bigsqcup_{x \in \topo{M}} M_{A,x} \text{.}
\]
If $x_A \in M_{A,x}$, we call $\red{x_A} \coloneqq x$ the \emph{base
point} of $x_A$.
\end{definition}

\begin{observation}
Notice that, since $\stalk{M}{x}$ is a local algebra, $M_{\K,x}$
contains only the evaluation $\ev_x$ and hence $M_\K$ is identified
with the set of topological points of $M$. Moreover, for each $A \in
\SWA$ and $x_A \in M_A$, we have that
\[
    x_A = \ev_{\red{x_A}} + L
\]
where $\im(L) \subseteq \nil{A}$.
\end{observation}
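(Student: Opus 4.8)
The statement to prove is the Observation: for $M_{\K,x}$, the only element is $\ev_x$, so $M_\K \cong \topo{M}$ as sets; and for any $A \in \SWA$ and $x_A \in M_A$ with base point $x = \red{x_A}$, one has $x_A = \ev_x + L$ where $\im(L) \subseteq \nil{A}$.

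The plan is as follows. First I would handle the claim about $M_{\K,x}$. An element of $M_{\K,x}$ is a superalgebra morphism $\alpha \colon \stalk{M}{x} \to \K$. Since $\K$ is a field, the kernel of $\alpha$ is a maximal ideal of $\stalk{M}{x}$; but $\stalk{M}{x}$ is a local superalgebra (this is part of the definition of superspace), so its unique maximal ideal is $\maxid_x$, the germs of sections vanishing at $x$. Hence $\ker \alpha = \maxid_x$, and since $\alpha$ is unital it must be the canonical quotient map $\stalk{M}{x} \to \stalk{M}{x}/\maxid_x \isom \K$, which is exactly $\ev_x$ by the description of $\maxid_x$ recalled in section \ref{subsec:SMan}. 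Therefore $M_{\K,x} = \{\ev_x\}$ is a singleton for every $x$, and disjoint-unioning over $x \in \topo{M}$ gives the bijection $M_\K \isom \topo{M}$.

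Next I would treat the general $A$. Fix $A \in \SWA$ and $x_A \in M_{A,x}$, a superalgebra morphism $x_A \colon \stalk{M}{x} \to A$. Compose with the projection $\pr_A \colon A = \K \oplus \nil{A} \to \K$ (the augmentation, whose kernel is the maximal ideal $\nil{A}$; this sequence was recalled in the Remark after the definition of super Weil algebra). The composite $\pr_A \circ x_A \colon \stalk{M}{x} \to \K$ is a superalgebra morphism into $\K$, hence by the argument of the previous paragraph it equals $\ev_x$; here I should check that $x$ is indeed the base point, i.e. that the locality condition $x_A(\maxid_x) \subseteq \nil{A}$ forces $\pr_A \circ x_A$ to have kernel $\maxid_x$ rather than being undefined — but this is immediate since $\pr_A \circ x_A$ being a ring map to a field has kernel a maximal ideal containing $\maxid_x$, whence equal to it. Writing $L \coloneqq x_A - j_A \circ \ev_x$ (where $j_A \colon \K \to A$ is the unit inclusion, so $j_A \circ \ev_x$ is just "$\ev_x$ viewed with values in $\K \subseteq A$"), we get for any germ $s$ that $\pr_A(L(s)) = \pr_A(x_A(s)) - \ev_x(s) = \ev_x(s) - \ev_x(s) = 0$, so $L(s) \in \ker \pr_A = \nil{A}$. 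Thus $\im(L) \subseteq \nil{A}$ and $x_A = \ev_x + L$, as claimed.

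There is no real obstacle here; the only point requiring a moment's care is making sure the two uses of "$\ev_x$" are consistently interpreted (once as a map to $\K$, once as a map to $A$ through $j_A$), and that locality of $\stalk{M}{x}$ together with the splitting $A = \K \oplus \nil{A}$ from the Remark are the only inputs needed. I would state explicitly that $L$ is a $\K$-linear (indeed parity-preserving) map but need not be an algebra morphism — only $x_A$ is — so the decomposition is as additive maps, which is all the Observation asserts.
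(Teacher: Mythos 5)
Your proof is correct and fills in exactly the argument the paper leaves implicit: the observation is justified in the text only by the remark that $\stalk{M}{x}$ is local, and your expansion (a unital superalgebra morphism to $\K$ has kernel a maximal ideal, hence $\maxid_x$, hence equals $\ev_x$; then compose $x_A$ with $\pr_A$ and set $L = x_A - j_A \circ \ev_x$) is the intended reasoning. The care you take in noting that $L$ is only additive and that the two readings of $\ev_x$ must be reconciled via $j_A$ is appropriate but does not depart from the paper's route.
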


We can consider the functor
\begin{equation} \label{eq:A-point_functor}
    M_{(\blank)} \colon \SWA \to \Sets
\end{equation}
defined on the objects as
\[
    A \mapsto M_A
\]
and on morphisms as $\rho \mapsto \funcpt{\rho}$, with $\rho \in
\Hom_{\SAlg}(A,B)$ and
\[
    \begin{aligned}
        \funcpt{\rho} \colon M_A &\to M_B \\
        x_A &\mapsto \rho \circ x_A \text{.}
    \end{aligned}
\]

\begin{remark} \label{remark:localalg}
Observe that the only local superalgebras which are equal to
$\sheaf(M)$ for some supermanifold $M$ are those of the form
$\ext[\K]{\theta_1,\dots,\theta_q} = \sheaf(\K^{0|q})$. For this reason this
functor is quite different from the functor of points borrowed from
algebraic geometry and detailed in the previous section.
\end{remark}

\begin{notation}
Here we introduce a multiindex notation that we will use in
the following. Let $\set{x_1,\ldots,x_p,\theta_1,\ldots,\theta_q}$
be a system of coordinates. If
\[
    \nu = (\nu_1,\ldots,\nu_p) \in \N^p \text{,}
\]
we define
\[
    x^\nu \coloneqq x_1^{\nu_1} x_2^{\nu_2} \cdots x_p^{\mu_p} \text{,}
\]
$\nu! \coloneqq \prod_i \nu_i!$, and $\abs{\nu} \coloneqq \sum_i
\nu_i$. If $J=(j_1,\ldots,j_r)$ with $1 \leq j_1 < \dots < j_r \leq q$
we define
\[
    \theta^J \coloneqq \theta_{j_1} \theta_{j_2} \cdots \theta_{j_r}.
\]
$\abs{J}$ denotes the cardinality of $J$.
\end{notation}

In order to understand the structure of $M_A$ we need some
preparation. We start with a  well known result that holds for
smooth supermanifolds and for  {Stein} superdomains $U \subseteq
\C^{p|q}$.

\begin{lemma}[``Super" Milnor's exercise]
Denote by $M$ either
\begin{enumerate}
    \item a smooth supermanifold or
    \item a {Stein} superdomain in $\C^{p|q}$.
\end{enumerate}
The superalgebra maps $\sheaf(M) \to \K$ are exactly the evaluations
$\ev_x \colon s \mapsto \red{s}(x)$ in the points $x \in \topo{M}$.
In other words there is a bijective correspondence between
$\Hom_{\SAlg}\big(\sheaf(M),\K\big)$ and $\topo{M}$.
\end{lemma}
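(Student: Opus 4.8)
The statement asserts that every $\K$-algebra morphism $\chi\colon\sheaf(M)\to\K$ is an evaluation $\ev_x$ at some point $x\in\topo{M}$. Since a superalgebra map to $\K$ automatically kills the odd part (as $\K$ is purely even) and factors through $\sheaf(M)/\sheaf[J](M)=\sheaf(\red{M})$, the problem reduces immediately to the classical (purely even) statement: algebra maps $\Cinf(\red{M})\to\R$ (resp.\ $\Hol(U)\to\C$) are point evaluations. This is exactly ``Milnor's exercise'' in the smooth case, which is why the lemma carries that name. So the plan is to reduce to the reduced manifold and then run the classical argument, treating the two cases in parallel where possible.

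\emph{Smooth case.} First I would check that $\chi$ is a \emph{unital} morphism (it preserves $1$, hence $\chi(c)=c$ for constants), so $\ker\chi$ is a maximal ideal of real codimension $1$. The key step is to produce a point $x$ with $\ker\chi\subseteq\maxid_x$, where $\maxid_x=\{f : f(x)=0\}$; equality then follows by codimension. To find $x$: pick a proper smooth embedding (or at least a proper map) $h\colon\red{M}\to\R^N$ — e.g.\ via a partition of unity, or by using countably many bump functions that together are proper — and set $a_i\coloneqq\chi(h_i)\in\R$, $x$ the preimage of $(a_1,\dots,a_N)$. If no such preimage existed, the function $g\coloneqq\sum_i (h_i-a_i)^2$ would be everywhere positive; properness of $h$ makes $g$ bounded away from $0$ on the complement of a compact set, and on that compact set $g$ attains a positive minimum, so $g$ is invertible in $\Cinf(\red{M})$. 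But $\chi(g)=\sum_i(\chi(h_i)-a_i)^2=0$, contradicting that $\chi$ sends units to units. So $x$ exists, and for any $f\in\maxid_x$ one shows $f\in\ker\chi$ by a Hadamard-type factorization $f=\sum_i (h_i-a_i)\,g_i$ near $x$, glued globally, giving $\chi(f)=\sum_i 0\cdot\chi(g_i)=0$ — hence $\ker\chi=\maxid_x$ and $\chi=\ev_x$.

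\emph{Holomorphic superdomain case.} Here $M$ is an open $U\subseteq\C^{p|q}$, so $\sheaf(\red{U})=\Hol(U)$ with $U\subseteq\C^p$ open. Now $\chi$ is a $\C$-algebra map $\Hol(U)\to\C$; apply it to the coordinate functions $z_1,\dots,z_p$ to get a point $x\coloneqq(\chi(z_1),\dots,\chi(z_p))\in\C^p$. The point to verify is $x\in U$: if $x\notin U$, then each $z_j-x_j$ is a unit in $\Hol(U)$ — actually one needs that some combination is a unit, which for domains of holomorphy is automatic, but for a general open $U$ one argues locally via a holomorphic function on $U$ that does not extend past $x$, or restricts to a polydisc; in any case some $h\in\Hol(U)$ with $h$ invertible but $\chi(h)=0$ yields a contradiction. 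Then for $f\in\Hol(U)$, expand $f$ about $x$: $f(z)-f(x)=\sum_j (z_j-x_j)\,g_j(z)$ with $g_j\in\Hol(U)$ (this is the holomorphic Hadamard lemma, valid since, as noted in the proof of Lemma~\ref{lemma:polynomials}, the stalk is convergent power series and one can solve this on a neighborhood of $x$, extending by the identity theorem / explicit integral formula on a polydisc and patching), whence $\chi(f)=f(x)=\ev_x(f)$.

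\emph{Main obstacle.} The genuine content is entirely classical and lies in the smooth case: constructing the proper map into $\R^N$ and running the ``sum of squares is a unit'' argument to locate the base point $x$ — equivalently, proving that $\Cinf(\red{M})$ has no maximal ideals other than the $\maxid_x$. Everything super-theoretic (passing to $\red{M}$, killing nilpotents and the odd part) is formal, and the holomorphic case is comparatively soft once one restricts attention near the candidate point and invokes the holomorphic Hadamard lemma already recorded above. I would cite Milnor–Stasheff (or the standard references \cite{Kostant,Varadarajan}) for the smooth statement rather than reproduce it in full.
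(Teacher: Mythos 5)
Your route is genuinely different from the paper's: the paper deduces the lemma in one line from the chart theorem \ref{prop:morphisms} and eq.~\eqref{eq:pullback_of_global_sections}, taking $\K^{0|0}$ as the source so that superalgebra maps $\sheaf(M)\to\K=\sheaf(\K^{0|0})$ become superspace morphisms $\K^{0|0}\to M$, i.e.\ points; all the analytic content is thereby delegated to the cited results. You instead reprove that content. The reduction to $\red{M}$ is correct (a parity\nbd preserving map to $\K$ kills $\sheaf(M)_1$ and all nilpotents), and your smooth argument is the standard proof of Milnor's exercise; the only soft spot is the ``glued globally'' Hadamard step, which leaves a remainder $r$ vanishing near $x$ and still requires the bump\nbd function argument of the lemma preceding prop.~\ref{prop:smoothcase} (or, more cleanly, rerunning the sum\nbd of\nbd squares argument with $f$ appended as an extra component of the proper map) to conclude $\chi(r)=0$. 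Citing \cite{Kostant} or \cite{KMS} for this, as you propose, is fine.

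The holomorphic half, however, has a genuine gap, and it sits exactly where you hedge: the claim that $x\coloneqq(\chi(z_1),\dots,\chi(z_p))$ lies in $\topo{U}$. For $p=1$ your argument works, since $z-x$ is then invertible in $\Hol(U)$. For $p\geq 2$ and $U$ not a domain of holomorphy there need be \emph{no} $h\in\Hol(U)$ that is invertible yet killed by $\chi$, because every holomorphic function on $U$ extends to the envelope of holomorphy; neither of your proposed fixes (``a function that does not extend past $x$'', ``restrict to a polydisc'') produces one. Concretely, for the Hartogs shell $U=\set{z\in\C^2 : 1<\abs{z_1}^2+\abs{z_2}^2<4}$, restriction $\Hol(B(0,2))\to\Hol(U)$ is an algebra isomorphism by the Kugelsatz, so ``extend and evaluate at $0$'' is a unital character of $\Hol(U)$ --- hence of $\sheaf(U^{2|q})=\Hol(U)\otimes\ext{\theta_1,\dots,\theta_q}$ after killing the odd generators --- which sends each $z_i$ to $0$ and therefore is not $\ev_x$ for any $x\in\topo{U}$. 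So the step cannot be repaired by a cleverer choice of $h$: your argument goes through only when $U$ is Stein, where Cartan's Theorem~B gives $1\in(z_1-x_1,\dots,z_p-x_p)\Hol(U)$ for $x\notin U$, or when $p\leq 1$. To cover a general open $U$ one has to argue, as the paper does, at the level of superspace morphisms via the chart theorem rather than through global sections --- and the example above shows that the global\nbd sections form of the statement genuinely requires such a restriction on $U$ in the holomorphic case.
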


\begin{proof}{
In the smooth case the lemma is a consequence of eq.\
\eqref{eq:pullback_of_global_sections}, considering that
$\sheaf(\R^{0|0}) = \R$ and the pullback of a morphism $\phi \colon
\R^{0|0} \to M$ is the evaluation at $\topo{\phi}(\R^0)$.

In the holomorphic case it is a again a consequence of the analogous classical result 
for Stein manifold (see, for example, \cite{Horm}).
Indeed, if 
\[
\psi\colon\Hol(\topo{M})\otimes \Lambda^q \to \C
\]
is an algebra morphism, it uniquely factorizes through the algebra morphism
\[
\topo{\psi}\colon\Hol(\topo{M}) \to \C
\]
and  we have
\[
\psi(f)=\topo{\psi}(\topo{f})
\]
Using  \cite[Proposition 57.1]{Kaup}, we are done.}
\end{proof}

\begin{remark}
Notice that the lemma does not hold
for a generic holomorphic supermanifold.
\end{remark}

Let $\psi \in \Hom_\SAlg \big( \sheaf(M),A \big)$. Due to the
previous lemma, there exists a unique point of $\topo{M}$, that we
denote by $\red{\psi}$, such that $\pr_{A} \circ \psi =
\ev_{\red{\psi}}$, where $\pr_A$ is the projection $A \to \K$.
We thus have a map
\begin{equation} \label{eq:base_point}
    \begin{aligned}
        \Hom_\SAlg \big( \sheaf(M),A \big) &\to \Hom_\SAlg \big( \sheaf(M),\K \big) \isom \topo{M} \\
        \psi &\mapsto \pr_{A} \circ \psi = \ev_{\red{\psi}} \text{.}
    \end{aligned}
\end{equation}



The next proposition establishes the local nature of the functor
$A \mapsto M_A$.

\begin{proposition}
\label{prop:valori_assegnati}
\begin{enumerate}
    \item\label{it:v_ass_germs} Each element $x_A$ of $M_A$ is determined by the images of
    the germs of a system of local coordinates $\germ{x_i},
    \germ{\theta_j}$ around $\red{x_A}$. Conversely, given $x \in
    \topo{M}$, a system of local coordinates $\set{x_i}_{i=1}^p,
    \set{\theta_j}_{j=1}^q$ around $x$ and elements $\set{\X_i}_{i=1}^p,
    \set{\T_j}_{j=1}^q$, $\X_i \in A_0$, $\T_j \in A_1$,\footnote{%
        The reader should notice the difference between
        $\set{x_i,\theta_j}$ and $\set{\X_i,\T_j}$.
    } such that $\red{\X_i} = \red{x_i}(x)$, there
    exists a unique morphism $x_A \in \Hom_\SAlg(\stalk{M}{x},A)$ such
    that
    \begin{equation} \label{eq:valori_assegnati}
        \left\{
        \begin{aligned}
            x_A(\germ{x_i}) & = \X_i \\
            x_A(\germ{\theta_j}) & = \T_j \text{.}
        \end{aligned}
        \right.
    \end{equation}
   {\item\label{it:associated_chart}  Suppose $(U,h)$ is a coordinate chart, then there is a  bijection 
   \begin{align*}
   U_A & \rightarrow \topo{h(U)}\times\nil{A}_0\times \nil{A}_1
   \end{align*}
   }
    \item\label{it:v_ass_chart} {Suppose  $U$ is a coordinate chart  in the smooth case, and a Stein  coordinate chart  in the holomorphic case, then} 
    there is a bijective correspondence
    \[
        U_A = \bigsqcup_{x \in U} \Hom_\SAlg (\stalk{M}{x}, A) \to \Hom_\SAlg \big( \sheaf_M(U),A \big) \text{.}
    \]
\end{enumerate}
\end{proposition}

\begin{proof}
Let us consider \itref{\ref{it:v_ass_germs}}. Suppose that $x_A$ is given. We want to show that the images of the
germs of local coordinates $x_A(\germ{x_i})$, $x_A(\germ{\theta_j})$
determine $x_A$ completely. This follows
noticing that 
\begin{itemize}
    \item the image of a polynomial section under $x_A$ is determined,
    \item there exists $k \in \N$ such that the kernel of $x_A$ contains
    $\maxid_x^k$ (see lemma \ref{lemma:SWA}) 
\end{itemize}
and using Hadamard's lemma. We now come to existence. Suppose the eq.\
\eqref{eq:valori_assegnati} are given and let $\germ{s}$ be a germ
at $x$. We define $x_A(\germ{s})$ through a formal Taylor expansion.
More precisely let
\[
    s = \sum_{J \subseteq \set{1,\ldots,q}} s_J \theta^J
\]
be a representative of $\germ{s}$ near $x$, where the $s_J$ are
smooth (holomorphic) functions in $x_1,\ldots,x_p$. Define
\begin{equation} \label{eq:formaltaylor}
    x_A(s)
    = \sum_{\substack{\nu \in \N^p \\ J \subseteq \set{1,\ldots,q}}}
    \frac{1}{\nu!} \frac{\partial^{\abs{\nu}} s_J}{\partial x^{\nu}} \bigg|_{(\red{\X_1},\ldots,\red{\X_p})} \nil{\X}^\nu \T^J \text{.}
\end{equation}
This is the way in which the purely formal expression
\[
    s(x_A) = s(\red{\X_1}+\nil{\X_1},\dots,\red{\X_p}+\nil{\X_p},\T_1,\dots,\T_q)
\]
is usually understood. Eq.\ \eqref{eq:formaltaylor}  has only a
finite number of terms due to the nilpotency of the $\nil{\X_i}$ and
$\T_j$. It is clear from eq.\ \eqref{eq:formaltaylor} that
$x_{A}(s)$ does not depend on the chosen representative. Finally
$x_{A}$ so defined is a superalgebra morphism since, for each
$\germ{s}, \germ{t} \in \stalk{M}{x}$,
\begin{align*}
    x_A(s t)
    &= \sum_{\substack{\nu \in \N^p \\ K \subseteq J \subseteq \set{1,\ldots,q}}}
    \lambda(K,J) \frac{1}{\nu!} \frac{\partial^{\abs{\nu}} (s_K t_{J \setminus K})}{\partial x^{\nu}} \nil{\X}^\nu \T^J \\
    &= \sum_{\nu-\mu \in \N^p, K, J} \lambda(K,J)
    \frac{1}{\nu!} \binom{\nu}{\mu}
    \frac{\partial^{\abs{\mu}} s_K}{\partial x^{\mu}}
    \frac{\partial^{\abs{\nu-\mu}} t_{J \setminus K}}{\partial x^{\nu-\mu}} \nil{\X}^\nu \T^J \\
    &= \sum_{\nu,\mu,J,K}
    \Bigg[ \frac{1}{\mu!} \frac{\partial^{\abs{\mu}} s_K}{\partial x^{\mu}} \nil{\X}^\mu \T^K \Bigg]
    \Bigg[ \frac{1}{(\nu-\mu)!} \frac{\partial^{\abs{\nu-\mu}} t_{J \setminus K}}{\partial x^{\nu-\mu}} \nil{\X}^{\nu-\mu} \T^{J \setminus K} \Bigg]
\end{align*}
where $\binom{\nu}{\mu} = \prod_i \binom{\nu_i}{\mu_i}$ and
$\lambda(K,J)$ is defined to be  $\pm 1$ according to $\T^K \T^{J \setminus K} =
\lambda(K,J) \T^J$.

\medskip

{\itref{\ref{it:associated_chart}} is just a restatement of \itref{\ref{it:v_ass_germs}}}

\medskip

Let us now consider \itref{\ref{it:v_ass_chart}}. Define the map $\eta_x \colon
\sheaf_M(U) \to \stalk{M}{x}$ assigning to each section over $U$ the
corresponding germ at $x\in U$, and consider
\[
    \begin{aligned}
        \smash[b]{\bigsqcup_{x \in U}} \Hom_\SAlg (\stalk{M}{x}, A) &\to \Hom_\SAlg \big( \sheaf_M(U),A \big) \\
        x_A &\mapsto x_A\circ \eta_{\red{x_A}} \text{.}
    \end{aligned}
\]
We show that it is invertible. Let $\psi \in \Hom_\SAlg \big(
\sheaf_M(U),A \big)$. If $\set{x_i,\theta_j}$ is a coordinate system
on $U$, due to \itref{\ref{it:v_ass_germs}}, the set $\set{\psi(x_i),\psi(\theta_j)}$
uniquely determines an element in $ \Hom_\SAlg
(\stalk{M}{\red{\psi}}, A)$. It is easy to show that this is the
required inverse.
\end{proof}


In the smooth category the above setting can  be somewhat
simplified. This is essentially due to eq.\
\eqref{eq:pullback_of_global_sections} and the related discussion.
Let us see in detail this point, summarized in prop.\
\ref{prop:smoothcase}.

\begin{lemma}
Let $M$ be a smooth supermanifold.
Let $s \in \sheaf(M)$ and let $\psi\in \Hom_\SAlg\big (\sheaf(M), A
\big)$. Assume that $s$ is zero when restricted to a certain
neighbourhood of $\red{\psi}$ (see eq.\ \eqref{eq:base_point}). Then
$\psi(s)=0$.
\end{lemma}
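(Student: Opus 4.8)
The plan is to \emph{localize} $s$ near the base point $x \coloneqq \red\psi$ by multiplying it with a smooth cutoff section that $\psi$ sends to a unit of $A$. This is exactly the place where smoothness is used: the statement is false for holomorphic supermanifolds, precisely because there need not be enough global sections (cf.\ the remark following ``Super'' Milnor's exercise). Concretely, let $U$ be an open neighbourhood of $x$ with $\restr{s}{U}=0$, and choose an open $V$ with $x\in V$ and $\overline V\subseteq U$. By the existence of partitions of unity (equivalently, bump functions) in the smooth category, and since smooth supermanifolds are ``affine'' in the sense of eq.\ \eqref{eq:pullback_of_global_sections}, there is an \emph{even} global section $g\in\sheaf(M)_0$ with $g\equiv 1$ on $V$ and $\supp g\subseteq U$.

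Next I would check that $gs=0$ in $\sheaf(M)$. The germ of $gs$ at any point $y\in U$ vanishes because the germ of $s$ does, and the germ of $gs$ at any $y\notin\supp g$ vanishes because the germ of $g$ does; since $\supp g\subseteq U$, the two open sets $U$ and $\topo{M}\setminus\supp g$ cover $\topo{M}$, so all stalks of $gs$ are zero and therefore $gs=0$ by the sheaf axiom. Applying the algebra morphism $\psi$ gives $\psi(g)\,\psi(s)=\psi(gs)=0$.

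It remains to observe that $\psi(g)$ is invertible in $A$. Since $\pr_A\circ\psi=\ev_x$ by the definition of $\red\psi$ in eq.\ \eqref{eq:base_point}, we have $\pr_A(\psi(g))=\ev_x(g)=\red g(x)=1$, so in the splitting $A=\K\oplus\nil{A}$ one has $\psi(g)=1+\nil{\psi(g)}$ with $\nil{\psi(g)}\in\nil{A}$ nilpotent; hence $\psi(g)$ is a unit, with inverse the finite sum $\sum_{n\ge 0}(-1)^n\nil{\psi(g)}^n$. Multiplying $\psi(g)\,\psi(s)=0$ by $\psi(g)^{-1}$ yields $\psi(s)=0$, as desired. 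The only genuine subtlety is the first step — producing the global even cutoff section $g$ rather than merely a function on $\red M$ — and everything after it is purely formal; I do not expect any further obstacle.
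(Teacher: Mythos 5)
Your proof is correct and follows essentially the same route as the paper: both construct an even cutoff section supported in $U$ and equal to $1$ near $\red\psi$, use it to kill $s$ globally, and then invert its image under $\psi$ via the splitting $A=\K\oplus\nil{A}$. Your write-up is somewhat more careful than the paper's (which works with $t\in\sheaf_M(U)$ and leaves the extension by zero and the identity $st=0$ implicit), but there is no substantive difference.
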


\begin{proof}
Suppose $U \ni \red{\psi}$ is such that $\restr{s}{U}=0$. Let $t \in
\sheaf_M(U)$ be such that $\supp(t) \subset U$ and $\restr{t}{V} =
1$, where the closure of $V$ is contained in $U$. Then
\[
    0 = \psi(st) = \psi(s)\psi(t) \text{.}
\]
Hence $\psi(s)=0$, since $\psi(t)$ is invertible being
$\ev_{\red{\psi}}(t) = 1$.
\end{proof}

\begin{proposition} \label{prop:smoothcase}
If $M$ is a smooth supermanifold
\begin{equation*}
    M_A \isom \Hom_{\SAlg_\R} \big( \sheaf(M), A \big)
\end{equation*}
in a functorial way. In other words we can equivalently define the
functor $M_{(\blank)}$ on the objects as:
$$
\begin{array}{cccc}
M_{(\blank)} \colon  & \SWA & \to & \Sets \\ \\
& A & \mapsto &  \Hom_{\SAlg_\R} \big( \sheaf(M), A \big).
\end{array}
$$

\end{proposition}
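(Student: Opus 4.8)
The plan is to exhibit an explicit mutually inverse pair of maps between $M_A = \bigsqcup_{x \in \topo{M}} \Hom_\SAlg(\stalk{M}{x},A)$ and $\Hom_{\SAlg_\R}(\sheaf(M),A)$, and then check naturality in $A$. The forward map is the obvious one: given $x_A \in \Hom_\SAlg(\stalk{M}{x},A)$, compose with the germ map $\eta_x \colon \sheaf(M) \to \stalk{M}{x}$ to obtain $x_A \circ \eta_x \in \Hom_{\SAlg_\R}(\sheaf(M),A)$. This is well defined on the disjoint union because each $x_A$ remembers its base point $x = \red{x_A}$. For the inverse, start from $\psi \in \Hom_{\SAlg_\R}(\sheaf(M),A)$; by ``Super'' Milnor's exercise together with the splitting $A = \K \oplus \nil{A}$, the composite $\pr_A \circ \psi$ is an evaluation $\ev_{\red{\psi}}$ at a unique point $\red{\psi} \in \topo{M}$ (this is eq.~\eqref{eq:base_point}). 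I then want to factor $\psi$ through $\eta_{\red{\psi}}$, i.e.\ produce $\hat\psi \in \Hom_\SAlg(\stalk{M}{\red{\psi}},A)$ with $\hat\psi \circ \eta_{\red{\psi}} = \psi$.

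The key step, and the one I expect to be the main obstacle, is precisely this factorization: that $\psi$ depends only on the germ at $\red{\psi}$ of a global section. This is exactly what the lemma immediately preceding the proposition provides — if $s \in \sheaf(M)$ vanishes on a neighbourhood of $\red{\psi}$, then $\psi(s) = 0$, by multiplying $s$ with a bump function $t$ that is $1$ near $\red{\psi}$ and using that $\psi(t)$ is invertible since $\ev_{\red{\psi}}(t) = 1$. So $\psi$ kills the kernel of $\eta_{\red{\psi}}$ on the image of $\eta_{\red{\psi}}$, hence descends to $\eta_{\red{\psi}}(\sheaf(M)) \subseteq \stalk{M}{\red{\psi}}$. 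The remaining subtlety is that $\eta_{\red{\psi}}$ need not be surjective onto the full stalk: a priori not every germ at $\red{\psi}$ extends to a global section. Here I would invoke Hadamard's lemma (Lemma~\ref{lemma:polynomials}) and the fact, established in the proof of Proposition~\ref{prop:valori_assegnati}\itref{\ref{it:v_ass_germs}}, that any superalgebra morphism $\stalk{M}{\red{\psi}} \to A$ is determined by, and freely specified by, the images of the germs of local coordinates subject only to the base-point compatibility $\red{\X_i} = \red{x_i}(\red{\psi})$. Since global coordinate-like sections with prescribed values near $\red{\psi}$ do exist (again via bump functions in the smooth category), I can set $\X_i \coloneqq \psi(\tilde x_i)$, $\T_j \coloneqq \psi(\tilde\theta_j)$ for global extensions $\tilde x_i, \tilde\theta_j$ of local coordinates, and let $\hat\psi$ be the unique morphism of Proposition~\ref{prop:valori_assegnati} with those coordinate values; the vanishing lemma guarantees that $\hat\psi \circ \eta_{\red{\psi}}$ and $\psi$ agree on enough sections (polynomials in the coordinates, then all sections by Hadamard) to conclude $\hat\psi \circ \eta_{\red{\psi}} = \psi$.

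Once both maps are in hand, I would verify they are mutually inverse. Starting from $x_A$, forming $\psi = x_A \circ \eta_x$ and then recovering $\hat\psi$: the base point of $\psi$ is $x$ since $\pr_A \circ x_A \circ \eta_x = \ev_x$, and $\hat\psi$ is the unique morphism with $\hat\psi(\germ{x_i}) = \psi(\tilde x_i) = x_A(\germ{x_i})$, so $\hat\psi = x_A$ by the uniqueness in Proposition~\ref{prop:valori_assegnati}. Conversely, starting from $\psi$ and forming $\hat\psi$, the composite $\hat\psi \circ \eta_{\red{\psi}} = \psi$ by the construction above. Finally I would check functoriality: for $\rho \in \Hom_\SAlg(A,B)$, the square relating $M_A \to \Hom(\sheaf(M),A)$ and $M_B \to \Hom(\sheaf(M),B)$ commutes because both horizontal maps are ``compose with $\eta$'' and the vertical maps are ``post-compose with $\rho$'', and composition is associative: $\rho \circ (x_A \circ \eta_x) = (\rho \circ x_A) \circ \eta_x = \funcpt{\rho}(x_A) \circ \eta_x$. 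This last verification is routine; the genuine content is entirely concentrated in the vanishing lemma and the reduction to germs, i.e.\ in the ``affineness'' of the smooth category already flagged around eq.~\eqref{eq:pullback_of_global_sections}.
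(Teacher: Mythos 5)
Your argument is correct and shares the paper's skeleton: the forward map is composition with the germ map $\eta_x$, the base point comes from the ``super'' Milnor exercise, and the vanishing lemma carries the real content. Where you diverge is in how $\psi$ is factored through the stalk. The paper simply observes that in the smooth category $\eta_{\red{\psi}}\colon \sheaf(M)\to\stalk{M}{\red{\psi}}$ is \emph{surjective} --- every germ extends to a global section, by the same bump\nbd function device you use to extend coordinates --- and then defines the inverse directly on germs by $\germ{s}\mapsto\psi(s)$ for any global extension $s$, well defined by the vanishing lemma. You treat this surjectivity as in doubt and route around it, building $\hat\psi$ via Proposition \ref{prop:valori_assegnati} from the images of global coordinate extensions and then matching $\hat\psi\circ\eta_{\red{\psi}}$ against $\psi$ using Hadamard's lemma. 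That detour is valid but unnecessary, and it costs you one extra verification that you gloss over: to conclude agreement on an arbitrary $s\in\sheaf(M)$ after writing its germ as a polynomial plus an element of $\maxid_{\red{\psi}}^{k}$, you must check that $\psi$ annihilates any global section whose germ lies in $\maxid_{\red{\psi}}^{k}$ for $k$ exceeding the height of $A$. This does follow --- by the vanishing lemma one may replace such a section with a sum of $k$\nbd fold products of global sections vanishing at $\red{\psi}$, each of which $\psi$ sends into $\nil{A}$ --- but it is not literally the statement of the vanishing lemma and should be spelled out. Both routes prove the proposition; the paper's is shorter because it exploits germ extension once and for all instead of reconstructing the stalk morphism coordinate by coordinate.
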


\begin{proof}
Clearly each $x_A \in M_A$ can be identified with a superalgebra map
$\sheaf(M) \to A$ by composing it with the natural map $\eta_x
\colon \sheaf(M)\to \stalk{M}{x}$. Vice versa let $\psi \in
\Hom_{\SAlg_\R} \big( \sheaf(M), A \big)$. In the smooth category, given
a germ $\germ{s} \in \stalk{M}{\red{\psi}}$, there exists a global
section $s \in \sheaf(M)$ such that $\eta_{\red{\psi}}(s) =
\germ{s}$. Since the image of $s$ under $\psi$ depends only on the
germs of $s$ at $\red{\psi}$, $\psi$ determines an element of
$M_{A,x}$. In fact, let $s' \in \germ{s}$ and let $U$ be a
neighbourhood of $\red{\psi}$ such that $\restr{s'}{U} =
\restr{s}{U}$. It is always possible to find a smaller neighbourhood
$V \subset U$ and $u,v_1,v_2 \in \sheaf(M)$ such that $s = u + v_1$,
$s' = u + v_2$ and $\restr{v_i}{V} = 0$. Then, due to the previous
lemma, $\psi(s) = \psi(u) = \psi(s')$. The functoriality is clear.
\end{proof}

The next observation gives us an interesting and very important
characterization of the $A$-points of a superdomain.

\begin{observation} \label{obs:coordinates}
Let $(U,h)$ be a chart in a supermanifold $M$ with local
coordinates $\set{x_i,\theta_j}$. By point \itref{2} of prop.\ \ref{prop:valori_assegnati}
we have an injective map
\[
    \begin{aligned}
        U_A &\to A_0^p \times A_1^q \\
        x_A &\mapsto (\X_1,\ldots,\X_p,\T_1,\ldots,\T_q) \coloneqq
            \big( x_A(x_1),\ldots, x_A(\theta_q) \big) \text{.}
    \end{aligned}
\]
We can think of it
heuristically as the assignment of $A$\nbd valued
coordinates $\set{\X_i,\T_j}$ on $U_A$. As we are going to see in theorem
\ref{theor:azerolinear} the components of the coordinates
$\{\X_i, \T_j\}$, given by
$\pair{\smash{a^*_k}}{\X_i}$, $\pair{\smash{a^*_k}}{\T_j}$
with respect to a basis $\set{a_k}$ of $A$ are indeed the
coordinates of a smooth or holomorphic manifold. The base point
$\red{x_A} \in U$ has coordinates $(\red{\X_1},\ldots,\red{\X_p})$.
In this language, if $\rho \colon A \to B$ is a super Weil algebra
morphism, the corresponding morphism $\funcpt{\rho} \colon M_A \to
M_B$ is ``locally'' given by
{
\begin{equation} \label{eq:rhoX...Xrho}
    \rho \times \dots \times \rho \colon \topo{U}\times\nil{A}_0^p \times A_1^q \to \topo{U}\times\nil{B}_0^p \times B_1^q \text{.}
\end{equation}
where, with an harmless abuse of notation, we are confusing $\topo{h(U)}$ with $\topo{U}$.}
This is well defined since $\funcpt{\rho}$ does not change the base
point.

If $M = \K^{p|q}$ we can also consider the slightly different
identification
\[
    \begin{aligned}
        \K^{p|q}_A &\to (A \otimes \K^{p|q})_0 \\
        x_A &\mapsto \sum_i x_A(e_i^*) \otimes e_i
    \end{aligned}
\]
where $\set{e_1,\ldots,e_{p+q}}$ denotes a homogeneous basis of
$\K^{p|q}$ and $\set{e_1^*,\ldots,e_{p+q}^*}$ its dual basis. Here a
little care is needed as we already remarked at the beginning of
section \ref{sec:basic_def}.
In the literature the name $\K^{p|q}$ is used
for two in principle different objects: it may indicate the super
vector space $\K^{p|q}=\K^p \oplus \K^q$ or the superdomain
{$(\K^p,\sheaf_{\K^p} \otimes \extn{q})$}. In the previous equation the
first $\K^{p|q}$ is viewed as a superdomain, while the last as a
super vector space. Likewise the $\set{e_i^*}$ are
interpreted both as
vectors and sections of $\sheaf(\K^{p|q})$. As we shall see in
subsection \ref{subsec:A0-smooth}
the functor
\[
    A \mapsto (A \otimes \K^{p|q})_0
\]
recaptures all the information about the superdomain $\K^{p|q}$,
so that the two in principle different ways of
looking at $\K^{p|q}$ become then identified.
This result will hence establish a quite natural
way to identify the two objects. With this identification,
the superdomain morphism
$\funcpt{\rho} \colon \K^{p|q}_A \to \K^{p|q}_B$
corresponds to the super vector space morphism
\[
    \rho \otimes \id \colon (A \otimes \K^{p|q})_0 \to (B \otimes \K^{p|q})_0 \text{.}
\]
\end{observation}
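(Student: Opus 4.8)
The plan is to extract every assertion of the observation directly from Proposition~\ref{prop:valori_assegnati} together with the canonical decomposition $a = \red a + \nil a$ of a super Weil algebra; I expect no further input to be needed.

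First I would prove injectivity of the map $U_A \to A_0^p \times A_1^q$. Since $U$ is a coordinate chart, the reduced map $x \mapsto (\red{x_1}(x),\dots,\red{x_p}(x))$ is a homeomorphism of $\topo U$ onto an open subset of $\K^p$, so the base point $\red{x_A}$ is already recovered from the even components $(\red{\X_1},\dots,\red{\X_p})$ of the image; and once the base point is fixed, the first half of point~\itref{1} of Proposition~\ref{prop:valori_assegnati} says $x_A$ is completely determined by the images $\X_i = x_A(\germ{x_i})$ and $\T_j = x_A(\germ{\theta_j})$. The subsidiary claim that $\red{x_A}$ has coordinates $(\red{\X_1},\dots,\red{\X_p})$ then falls out by reducing the identity $x_A = \ev_{\red{x_A}} + L$, with $\im L \subseteq \nil A$, modulo $\nil A$: this gives $\red{\X_i} = \red{x_i}(\red{x_A})$.

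Next I would analyse $\funcpt\rho$. Any morphism $\rho\colon A\to B$ in $\SWA$ is a unital superalgebra map, hence fixes the copies of $\K$ and carries $\nil A$ into $\nil B$, so $\pr_B\circ\rho = \pr_A$; consequently $\pr_B\circ(\rho\circ x_A) = \pr_A\circ x_A = \ev_{\red{x_A}}$, which shows that $\funcpt\rho$ leaves the base point unchanged. One may therefore compute in the single chart $U$ around $\red{x_A} = \red{\funcpt\rho(x_A)}$, in which the coordinates of $\funcpt\rho(x_A) = \rho\circ x_A$ are $\rho(\X_1),\dots,\rho(\T_q)$; this is exactly the statement that $\funcpt\rho$ is locally $\rho\times\dots\times\rho$ on $A_0^p\times A_1^q$, and the fact that this expression makes sense at all is the base-point invariance just noted.

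Finally, for $M=\K^{p|q}$ the chart $U=\K^{p|q}$ is global and its reduced manifold is all of $\K^p$, so the converse half of point~\itref{1} of Proposition~\ref{prop:valori_assegnati} upgrades the injection above to a bijection $\K^{p|q}_A \to A_0^p\times A_1^q$: for an arbitrary tuple $(\X_1,\dots,\T_q)$ the point $x \coloneqq (\red{\X_1},\dots,\red{\X_p})$ automatically lies in $\K^p$ and the constraint $\red{\X_i} = \red{x_i}(x)$ holds by construction, so there is a unique $x_A\in\Hom_\SAlg(\stalk{\K^{p|q}}{x},A)$ realising it. Reading $e_i^*$ as the coordinate section $x_i$ (resp.\ $\theta_j$) and using the standard basis, this bijection is $x_A \mapsto \sum_i x_A(e_i^*)\otimes e_i \in (A\otimes\K^{p|q})_0$, and the compatibility $\funcpt\rho \leftrightarrow \rho\otimes\id$ is then immediate from $\sum_i(\rho\circ x_A)(e_i^*)\otimes e_i = (\rho\otimes\id)\bigl(\sum_i x_A(e_i^*)\otimes e_i\bigr)$ combined with the previous paragraph. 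The one step that is more than bookkeeping is verifying that $\funcpt\rho$ preserves base points; with that in hand the rest is just unwinding definitions, so I anticipate no genuine obstacle.
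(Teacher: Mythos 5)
Your proposal is correct and follows essentially the same route as the paper: the observation is justified there precisely by appeal to Proposition \ref{prop:valori_assegnati} together with the decomposition $a=\red a+\nil a$, which is exactly what you use. Your explicit verification that $\pr_B\circ\rho=\pr_A$ (hence that $\funcpt\rho$ preserves base points) is a detail the paper leaves implicit, but it is the intended argument.
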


\subsection{Natural transformations between functors of $A$\nbd points}
\label{subsec:A-point_nat_trans}

In the previous subsection we have seen that, somehow
mimicking the functor of points approach to supermanifolds, it is
possible to associate to each supermanifold $M$ a functor
\[
    \begin{aligned}
        \SWA &\to \Sets \\
        A &\mapsto M_A \text{.}
    \end{aligned}
\]
Hence we have a functor:
\begin{equation*}
    \ber \colon \SMan \to \funct{\SWA}{\Sets} \text{.}
\end{equation*}
The natural question about such a  functor is whether $\ber$ is a full
and faithful embedding or not. In this subsection, we show that
$\ber$ is not full, in other words, there are many more natural
transformations between $M_{(\blank)}$ and $N_{(\blank)}$ than those
coming from morphisms from $M$ to $N$. We will show this by giving a
simple example. Then, in prop.\ \ref{prop:formal_series}, we will
see a characterization of the natural transformation between
two superdomains.

\medskip

Let us start our discussion. We first want to
show that the natural transformations $M_{(\blank)} \to
N_{(\blank)}$ arising from supermanifold morphisms $M\to N$ have a
very peculiar form. Indeed, a morphism $\phi \colon M \to N$
of supermanifolds induces a natural
transformation between the corresponding functors of $A$\nbd points
given by
\[
    \begin{aligned}
        \phi_A \colon M_A &\to N_A \\
        x_A &\mapsto x_A \circ \phi^*
    \end{aligned}
\]
for all super Weil algebras $A$. Let $M =
\K^{p|q}$ and $N = \K^{m|n}$, and denote respectively by
$\set{x_i,\theta_j}$ and $\set{x'_k,\theta'_l}$ two systems of
coordinates over them. With these assumptions, $\phi$ is determined
by the pullbacks of the coordinates of $N$, while the $A$\nbd point
$\phi_A(x_A)$ is determined by
\[
    (\X'_1,\ldots,\X'_{m},\T'_1,\ldots,\T'_{n}) \coloneqq
    \big( x_A \circ \phi^*(x'_1),\ldots,x_A \circ \phi^*(\theta'_{n}) \big) \in A_0^{m} \times A_1^{n} \text{.}
\]
If $(\X_1,\ldots,\X_p,\T_1,\ldots,\T_q)$ denote the images of the
coordinates of $M$ under $x_A$ ($\X_1 = x_A(x_1)$, etc.) and
$\phi^*(x'_k) = \sum_J s_{k,J} \theta^J \in \sheaf(\K^{p|q})_0$,
where the $s_{k,J}$ are functions on $\K^p$, then we have
\begin{equation} \label{eq:nat_tr_from_morph}
    \X'_k = x_A \circ \phi^*(x'_k)
    = \sum_{\substack{\nu \in \N^p \\ J \subseteq \set{1,\ldots,q}}}
    \frac{1}{\nu!} \frac{\partial^{\abs{\nu}} s_{k,J}}{\partial x^{\nu}} \bigg|_{(\red{\X_1},\ldots,\red{\X_p})} \nil{\X}^\nu \T^J
\end{equation}
and similarly for the odd coordinates (see prop.\ \ref{prop:valori_assegnati}).
Notice that if we pursue
the point of view of observation \ref{obs:coordinates}, i.~e.\ if we
consider $\set{\X_i,\T_j}$ as $A$\nbd valued coordinates of
$\K^{p|q}_A$, this equation can be read as a coordinate expression
for $\phi_A$.

Not all the natural transformations
$M_{(\blank)} \to N_{(\blank)}$ arise in this way. This happens also
for purely even manifolds, as we see in the next example.

\begin{example} \label{exampla:counter-example_nat_tr}
Let $M$ and $N$ be two smooth manifolds and let $\phi \colon M \to
N$ be a map (smooth or not). The natural transformation
$\alpha_{(\blank)} \colon M_{(\blank)} \to N_{(\blank)}$
\[
    \begin{aligned}
        \alpha_A \colon M_A &\to N_A \\
        x_A &\mapsto \ev_{\phi(\red{x_A})}
    \end{aligned}
\]
is not of the form seen above, even if $\phi$ is assumed to be
smooth, while we still have $\phi=\alpha_\K$.
\end{example}

We end this subsection with a technical result, essentially due to
A. A. Voronov (see \cite{Voronov}), characterizing all possible
natural transformations between the functors of $A$\nbd points of
two superdomains, hence comprehending also those not arising
from supermanifold morphisms.

\begin{definition}
Let $U$ be an open subset of $\K^p$. We denote by
$\fseries{p}{q}(U)$ the unital commutative superalgebra of formal
series with $p$ even and $q$ odd generators and coefficients in the
algebra $\functions(U,\K)$ of arbitrary functions on $U$, i.~e.\
\[
    \fseries{p}{q}(U) \coloneqq \functions(U,\K)[[X_1,\dots,X_p, \Theta_1,\dots,\Theta_q]] \text{.}
\]
An element $F \in
\fseries{p}{q}(U)$ is of the form
\[
    F
    = \sum_{\substack{\nu \in {\N}^p \\ J  \subseteq \set{1,\ldots,q}}}
    f_{\nu,J} X^\nu \Theta^J
\]
where $f_{\nu,J} \in \functions(U,\K)$ and $\set{X_i}$ and
$\set{\Theta_j}$ are even and odd generators. $\fseries{p}{q}(U)$ is
a graded algebra: $F$ is even (resp.\ odd) if $\abs{J}$ is even
(resp.\ odd) for each term of the sum.
\end{definition}

Let us introduce a partial order between super Weil algebras by
saying that $A' \preceq A$ if and only if $A'$ is a quotient of $A$.

\begin{lemma} \label{lemma:upper_bound}
The set of super Weil algebras is directed, i.~e., if $A_1$ and
$A_2$ are super Weil algebras, then there exists $A$ such that $A_i
\preceq A$.
\end{lemma}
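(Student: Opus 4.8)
The plan is to construct an explicit upper bound $A$ for two given super Weil algebras $A_1$ and $A_2$ by taking their tensor product over $\K$. First I would set $A \coloneqq A_1 \otimes_\K A_2$ and check that this is again a super Weil algebra. The tensor product of two finite-dimensional commutative superalgebras is finite-dimensional and commutative, so condition (1) of the definition holds. For conditions (2) and (3), one uses the decompositions $A_i = \K \oplus \nil{A_i}$: then
\[
    A_1 \otimes_\K A_2 = (\K \oplus \nil{A_1}) \otimes (\K \oplus \nil{A_2})
    = \K \oplus \big( \nil{A_1} \otimes \K + \K \otimes \nil{A_2} + \nil{A_1} \otimes \nil{A_2} \big),
\]
and the second summand $\nil{A} \coloneqq \nil{A_1} \otimes A_2 + A_1 \otimes \nil{A_2}$ is a graded ideal; it is nilpotent because if $\nil{A_1}^{r_1+1} = 0$ and $\nil{A_2}^{r_2+1} = 0$ then $\nil{A}^{r_1+r_2+1} = 0$ by the multinomial expansion (every product of $r_1+r_2+1$ factors from $\nil{A}$ lands, after collecting tensor slots, in $\nil{A_1}^{r_1+1} \otimes A_2 + A_1 \otimes \nil{A_2}^{r_2+1} = 0$). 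Since $A$ is local with $A = \K \oplus \nil{A}$, it is a super Weil algebra. Alternatively, one can invoke lemma \ref{lemma:SWA}: writing $A_i \cong \kspoly{k_i}{l_i}/I_i$ with $I_i \supseteq \langle \text{generators} \rangle^{n_i}$, the tensor product is $\kspoly{k_1+k_2}{l_1+l_2}/(I_1 \otimes {+} \otimes I_2)$, which again contains a high power of the maximal ideal, hence is a super Weil algebra by criterion (2).

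Next I would exhibit the required surjections $A \to A_i$. The multiplication-by-unit maps give superalgebra morphisms $A_1 \to A_1 \otimes_\K A_2$, $a \mapsto a \otimes 1$, but we need maps in the other direction. Instead, use the projections $\pr_{A_2} \colon A_2 \to A_2/\nil{A_2} \cong \K$ and form
\[
    \id_{A_1} \otimes \pr_{A_2} \colon A_1 \otimes_\K A_2 \to A_1 \otimes_\K \K \cong A_1,
\]
which is a surjective superalgebra morphism; symmetrically $\pr_{A_1} \otimes \id_{A_2} \colon A \to A_2$ is surjective. Hence $A_i \preceq A$ for $i = 1, 2$, which is exactly the directedness statement. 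The fact that $\preceq$ is a partial order (reflexive via the identity, transitive since a composition of surjections is a surjection) is immediate and need not be belabored; only antisymmetry up to isomorphism requires the finite-dimensionality, which holds.

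The main obstacle — and it is a mild one — is verifying nilpotency of the augmentation ideal of the tensor product with a clean bound, and making sure the grading is respected at every step (the tensor product of superalgebras carries the Koszul-sign-twisted multiplication, but since both factors are commutative superalgebras the result is again commutative, and the $\Z_2$-grading is the total one). I expect the whole argument to be short: the only genuine content is the nilpotency estimate $\nil{A}^{r_1+r_2+1} = 0$, everything else being formal. If one prefers to avoid the sign subtleties entirely, the cleanest route is to pass through lemma \ref{lemma:SWA} criterion (2) as indicated above, presenting $A_1$ and $A_2$ as graded quotients of polynomial superalgebras and taking the obvious quotient of $\kspoly{k_1+k_2}{l_1+l_2}$; this reduces the problem to the already-proved characterization lemma and leaves essentially nothing to check beyond bookkeeping with generators.
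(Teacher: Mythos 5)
Your proof is correct, but it takes a genuinely different route from the paper's. The paper uses lemma \ref{lemma:SWA} to present both algebras as quotients of a \emph{single} polynomial superalgebra, $A_i \isom \kspoly{k}{l}/J_i$ (taking $k,l$ large enough to accommodate both), and then sets $A \coloneqq \kspoly{k}{l}/(J_1 \cap J_2)$; finite-dimensionality follows because $\maxid^{r+1} \subseteq J_1 \cap J_2$ with $r$ the maximum of the two heights, and the surjections $A \to A_i$ are the tautological ones induced by $J_1 \cap J_2 \subseteq J_i$. You instead take $A \coloneqq A_1 \otimes_\K A_2$ and project out each factor via the augmentation of the other. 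Both arguments are sound: your nilpotency estimate $\nil{A}^{r_1+r_2+1}=0$ is correct (each monomial of length $r_1+r_2+1$ has at least $r_1+1$ factors from $\nil{A_1}\otimes A_2$ or at least $r_2+1$ from $A_1\otimes\nil{A_2}$), and $\id_{A_1}\otimes\pr_{A_2}$ is indeed a surjective superalgebra morphism since both factors preserve parity, so the Koszul signs cause no trouble. What each approach buys: yours produces a canonical, presentation-independent upper bound and needs no appeal to lemma \ref{lemma:SWA} at all (your ``alternative route'' through criterion (2) is really just the paper's construction with $J_1\cap J_2$ replaced by the larger ideal $I_1\otimes\kspoly{k_2}{l_2}+\kspoly{k_1}{l_1}\otimes I_2$ after inflating the number of generators); the paper's version is shorter given that lemma \ref{lemma:SWA} is already established, and its upper bound $\kspoly{k}{l}/(J_1\cap J_2)$ has height only $\max(r_1,r_2)$ rather than $r_1+r_2$, which is a mildly more economical object. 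The closing remarks in your write-up about $\preceq$ being a partial order are not needed for directedness and could be dropped.
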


\begin{proof}
In view of lemma \ref{lemma:SWA}, choosing carefully $k, l \in \N$
and $J_1$ and $J_2$ ideals of $\kspoly{k}{l}$, we have $A_i \isom
\kspoly{k}{l} / J_i$. If $r$ is the maximum between the heights of
$A_1$ and $A_2$, $\maxid^{r+1} \subset J_1 \cap J_2$. So $A \isom
\kspoly{k}{l} / (J_1 \cap J_2)$ is finite dimensional and then it is
a super Weil algebra.
\end{proof}

\begin{proposition} \label{prop:formal_series}
Let $U$ and $V$ be two superdomains
in $\K^{p|q}$ and $\K^{m|n}$
respectively. The set of natural transformations in
$\funct{\SWA}{\Sets}$ between $U_{(\blank)}$ and $V_{(\blank)}$ is
in bijective correspondence with the set of elements of the form
\[
    \F = (F_1,\ldots,F_{m+n}) \in \big( \fseries{p}{q}(\topo{U}) \big)_0^m \times \big( \fseries{p}{q}(\topo{U}) \big)_1^n
\]
such that, if $F_k = \sum_{\nu,J} f^k_{\nu,J} X^\nu \Theta^J$,
\begin{equation} \label{eq:restricions}
    \big( f^1_{0,\emptyset}(x),\ldots,f^m_{0,\emptyset}(x) \big) \subseteq \topo{V}
    \qquad \forall x \in \topo{U} \text{.}
\end{equation}
\end{proposition}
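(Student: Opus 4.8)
The strategy is to exhibit the bijection in both directions and then check it is well defined. Given a natural transformation $\alpha \colon U_{(\blank)} \to V_{(\blank)}$, I would first produce the candidate formal series $\F$ as follows. Fix $x \in \topo{U}$. For a super Weil algebra $A$ of the form $\fseries{p}{q}(\topo{U})$ truncated — more precisely, take $A = \kspoly{p}{q}/\maxid^{r+1}$ and consider the $A$\nbd point $x_A$ of $U$ whose coordinates are $\X_i = \red{x_i}(x) + [t_i]$, $\T_j = [\theta_j]$ (this exists by prop.\ \ref{prop:valori_assegnati}\itref{\ref{it:v_ass_germs}}, since the $\nil{\X_i},\T_j$ are nilpotent). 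Apply $\alpha_A$ to get an $A$\nbd point $\alpha_A(x_A)$ of $V$, and read off its coordinates $\X'_k, \T'_l \in A_0 \times A_1$, which are polynomials of degree $\le r$ in $t_i,\theta_j$ with coefficients depending on $x$. Letting $r \to \infty$ (using lemma \ref{lemma:upper_bound} to know these truncations are mutually compatible quotients, and naturality of $\alpha$ with respect to the quotient maps $\kspoly{p}{q}/\maxid^{r+1} \to \kspoly{p}{q}/\maxid^{r'+1}$ to see the coefficients stabilize) one obtains formal series $F_k \in \fseries{p}{q}(\topo{U})$ by declaring $f^k_{\nu,J}(x)$ to be the coefficient of $t^\nu\theta^J$; no regularity in $x$ is assumed, so the coefficients lie in $\functions(\topo{U},\K)$. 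The parity constraint is automatic because $\X'_k \in A_0$, $\T'_l \in A_1$, and the condition \eqref{eq:restricions} holds because the base point $\red{\alpha_A(x_A)} \in \topo{V}$ and equals $\big(f^1_{0,\emptyset}(x),\dots,f^m_{0,\emptyset}(x)\big)$.

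Conversely, given $\F$ satisfying \eqref{eq:restricions}, I would define a natural transformation $\alpha^{\F}$ by the ``formal substitution'' formula: for $x_A \in U_A$ with coordinates $(\X_i,\T_j)$ over the base point $x = \red{x_A}$, set the coordinates of $\alpha^{\F}_A(x_A)$ to be
\[
    \X'_k = \sum_{\nu,J} \frac{1}{\nu!}\, \frac{\partial^{\abs{\nu}} f^k_{\nu',J'}}{\partial x^{\nu}}\bigg|_{x}\, \nil{\X}^{\nu} \cdots
\]
— more cleanly, one plugs $\X_i = \red{\X_i} + \nil{\X_i}$ into $F_k$ and Taylor-expands the coefficient functions $f^k_{\nu,J}$ around $\red{\X_i} = \red{x_i}(x)$, exactly as in \eqref{eq:formaltaylor} and \eqref{eq:nat_tr_from_morph}; nilpotency of $\nil{\X_i},\T_j$ makes the sum finite. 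Here I should be careful: only the coefficients $f^k_{\nu,J}$ are arbitrary functions, so Taylor-expanding them requires that $\nil{\X_i}$ be nilpotent — which it is — so only finitely many derivatives enter and no smoothness of $f^k_{\nu,J}$ is needed. By prop.\ \ref{prop:valori_assegnati}\itref{\ref{it:v_ass_germs}} this data determines a genuine $A$\nbd point of $V$ (condition \eqref{eq:restricions} guarantees the base point lands in $\topo{V}$), and naturality in $A$ follows from the description \eqref{eq:rhoX...Xrho} of $\funcpt{\rho}$ in coordinates, since formal substitution commutes with applying a Weil algebra morphism coordinate-wise.

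Finally I would check the two constructions are mutually inverse. That $\alpha \mapsto \F \mapsto \alpha^{\F}$ recovers $\alpha$ uses that a natural transformation is determined by its values on the ``generic'' points $x_A$ with $\X_i = \red{x_i}(x) + [t_i]$, $\T_j = [\theta_j]$: an arbitrary $A$\nbd point is the image of such a generic point under a suitable Weil algebra morphism $\kspoly{p}{q}/\maxid^{r+1} \to A$, so naturality pins down $\alpha_A$ everywhere once it is known on generic points — and on generic points $\alpha^{\F}$ reproduces the defining coefficients of $\F$ by construction. The reverse composite $\F \mapsto \alpha^{\F} \mapsto \F'$ gives $\F' = \F$ because evaluating the substitution formula at a generic point returns precisely the original series coefficients.

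**Main obstacle.** The delicate point is the passage to the formal (infinite) series: one must argue that the finite truncations obtained from $A = \kspoly{p}{q}/\maxid^{r+1}$ for varying $r$ are compatible and hence assemble into a well-defined element of $\fseries{p}{q}(\topo{U})$, and — dually — that the substitution formula defining $\alpha^{\F}$ is independent of the chosen representative of a germ and genuinely natural. Both hinge on the local Taylor-expansion machinery of prop.\ \ref{prop:valori_assegnati} together with the fact (lemma \ref{lemma:SWA}) that every super Weil algebra kills a power of the maximal ideal, so that only a finite jet of $\F$ is ever tested by a given $A$; organizing this bookkeeping cleanly is the real content, the rest being the routine verification that formal substitution is an algebra map and commutes with Weil algebra morphisms.
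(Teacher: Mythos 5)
Your overall architecture is the paper's: you extract $\F$ from $\alpha$ by evaluating it on the ``generic'' points of the truncated polynomial algebras $\kspoly{p}{q}/\maxid^{s}$, and you recover $\alpha_A$ on an arbitrary $x_A$ by pushing a generic point forward along the algebra morphism sending the generators to $\nil{\X_i}$ and $\T_j$; the stabilization of the coefficients as $s$ grows is the paper's uniqueness step in a slightly different guise. That half of the argument is fine.

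There is, however, a genuine error in the converse direction, $\F\mapsto\alpha^{\F}$. You propose to ``plug $\X_i=\red{\X_i}+\nil{\X_i}$ into $F_k$ and Taylor-expand the coefficient functions $f^k_{\nu,J}$ around $\red{\X_i}$,'' and your displayed formula contains $\partial^{\abs{\nu}}f^k_{\nu',J'}/\partial x^{\nu}$. This cannot work: the $f^k_{\nu,J}$ are \emph{arbitrary} elements of $\functions(\topo{U},\K)$, so they need not have even a first derivative --- nilpotency of $\nil{\X_i}$ only bounds how many derivatives you would need, it does not make any of them exist, and your remark that ``no smoothness of $f^k_{\nu,J}$ is needed'' is exactly backwards. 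The correct recipe involves no differentiation at all: one sets
\[
    F_k(\X_1,\ldots,\X_p,\T_1,\ldots,\T_q)\coloneqq\sum_{\nu,J} f^k_{\nu,J}(\red{\X_1},\ldots,\red{\X_p})\,\nil{\X}^{\nu}\T^{J}\text{,}
\]
i.e.\ the coefficients are merely \emph{evaluated} at the base point while the formal variables $X^{\nu}\Theta^{J}$ are replaced by $\nil{\X}^{\nu}\T^{J}$. The formal series already encodes the entire jet in the nilpotent directions through its separate coefficients; Taylor expansion of the coefficients is what one does when starting from an honest section $\sum_J s_{k,J}\theta^J$ as in eq.~\eqref{eq:nat_tr_from_morph}, the two pictures being matched by $f^k_{\nu,J}=\tfrac{1}{\nu!}\partial^{\nu}s_{k,J}$. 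So even when the coefficients happen to be smooth, your formula double-counts the expansion in the nilpotents and is then not inverse to your first construction. With the substitution formula corrected, the remaining verifications (naturality via coordinatewise application of $\rho$, which respects the splitting $A=\K\oplus\nil{A}$, and the mutual-inverse check on generic points) go through as you describe.
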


\begin{proof}
As above, $\K^{p|q}_A$ is identified with $A_0^p \times A_1^q$ and
consequently a map $\K^{p|q}_A \to \K^{m|n}_A$ consists of
a list of $m$ maps $A_0^p
\times A_1^q \to A_0$ and $n$ maps $A_0^p \times A_1^q \to A_1$. In
the same way, $U_A$ is identified with $\topo{U} \times \nil{A_0}^p
\times A_1^q$.

Let $\F = (F_1,\ldots,F_{m+n})$ be as in the hypothesis. A formal
series $F_k$ determines a map $\topo{U} \times \nil{A_0}^p \times
A_1^q \subseteq A_0^p \times A_1^q \to A$ in a natural way, defining
\[
    F_k(\X_1,\ldots,\X_p,\T_1,\ldots,\T_q)
    \coloneqq \sum_{\substack{\nu \in \N^p \\ J \subseteq \set{1,\ldots,q}}}
    f^k_{\nu,J}(\red{\X_1},\ldots,\red{\X_p}) \nil{\X}^\nu \T^J \text{.}
\]
The parity of its image is the same as that of $F_k$. Then, in view
of the restrictions imposed on the first $m$ $F_k$ given by eq.\
\eqref{eq:restricions}, $\F$ determines a map $U_A \to V_A$ and,
varying $A \in \SWA$, a natural transformation $U_{(\blank)} \to
V_{(\blank)}$, as it is easily checked.

Let us now suppose now that $\alpha_{(\blank)} \colon U_{(\blank)} \to
V_{(\blank)}$ is a natural transformation. We will see that it is
determined by a unique $\F$ in the way just explained.

Let $A$ be a super Weil algebra of height $r$ and
\[
    x_A = (\red{\X_1} + \nil{\X_1},\ldots,\red{\X_p} + \nil{\X_p},\T_1,\ldots,\T_q)
    \in A_0^p \times A_1^q \isom \K^{p|q}_A
\]
with $\red{x_A} \in \topo{U}$. Let us consider the super Weil
algebra
\begin{equation} \label{eq:hatA}
    \hat{A} \coloneqq \big( \K[z_1,\ldots,z_p] \otimes \ext{\zeta_1,\ldots,\zeta_q} \big) / \maxid^{s}
\end{equation}
with $s > r$ ($\maxid$ is as usual the maximal ideal of polynomials
without constant term) and the $\hat{A}$\nbd point
\[
    y_{\red{x_A}} \coloneqq (\red{\X_1} + z_1,\ldots,\red{\X_1} + z_p,\zeta_1,\ldots,\zeta_q)
    \in \hat{A}_0^p \times \hat{A}_1^q \isom \K^{p|q}_{\hat{A}} \text{.}
\]

A homomorphism between two super Weil algebras is clearly fixed
by the images of a set of generators, but this assignment must
be compatible with the relations between the generators. The
following assignment is possible due to the definition
of $\hat{A}$.
If $\rho_{x_A} \colon \hat{A} \to A$ denotes the map
\[
    \left\{
    \begin{aligned}
        z_i &\mapsto \nil{\X_i} \\
        \zeta_j &\mapsto \T_j \text{,}
    \end{aligned}
    \right.
\]
then clearly $\funcpt{\rho_{x_A}}(y_{\red{x_A}}) = x_A$.

Let ${(\alpha_{\hat{A}})}_k$ with  $1 \leq k \leq m+n$ be a
component of $\alpha_{\hat{A}}$, and let
\[
    {(\alpha_{\hat{A}})}_k(y_{\red{x_A}}) = \sum_{\nu,J} a^k_{\nu,J}(\red{x_A}) z^\nu \zeta^J
\]
with $a^k_{\nu,J}(\red{x_A}) \in \K$ and $\big(
a^1_{0,\emptyset}(\red{x_A}),\ldots,a^m_{0,\emptyset}(\red{x_A})
\big) \in \topo{V}$; the sum is on $\abs{J}$ even (resp.\ odd), if
$k \leq m$ (resp.\ $k
> m$). Due to the functoriality of $\alpha_{(\blank)}$
\[
    {(\alpha_A)}_k (x_A)
    = {(\alpha_{A})}_k \circ \funcpt{\rho_{x_A}} (y_{\red{x_A}})
    = \rho_{x_A} \circ {(\alpha_{\hat{A}})}_k (y_{\red{x_A}})
    = \sum_{\nu,J} a^k_{\nu,J}(\red{x_A}) \nil{\X}^\nu \T^J \text{,}
\]
so that there exists a non unique $\F$ such that $\F(x_A) =
\alpha_A(x_A)$. Moreover $\F(x_{A'}) = \alpha_{A'}(x_{A'})$ for each
$A' \preceq A$ and $x_{A'} \in U_{A'}$ (it is sufficient to use the
projection $A \to A'$).

If, by contradiction, 
$\F'$ is another list of formal series with this property, there
exists a super Weil algebra $A''$ such that $\F(x_{A''}) \neq
\F'(x_{A''})$ for some $x_{A''} \in U_{A''}$. Indeed if a component
$F_k$ differs in $f^k_{\nu,J}$, it is sufficient to consider $A''
\coloneqq \kspoly{p}{q} / \maxid^{s}$ with $s > \max \big(
\abs{\nu},q \big)$.
\end{proof}

\section{The Weil--Berezin functor and the Shvarts embedding}
\label{sec:WB_functor-Sh_emb}

In the previous section we saw that the functor
\[
    \ber \colon \SMan \to \funct{\SWA}{\Sets}
\]
does not define a full and faithful embedding of $\SMan$ in
$\funct{\SWA}{\Sets}$, the category of functors from
$\SWA$ to $\Sets$. 
Roughly speaking, the root of such a
difficulty can be traced to the fact that the functor
$\ber(M) \colon \SWA \to \Sets$
looks only to the local structure of the supermanifold $M$, hence it loses
all the global information.
For the functor of points as we described it in
\ref{subsec:functorofpoints}, we obtain a full and faithfull embedding
thanks to the Yoneda's lemma. If we try to reproduce its proof
in this different setting, we see that the main obstacle is that
$\id_M$ can no longer be seen as an $M$\nbd point of the
supermanifold $M$ itself.
The following heuristic argument gives a hint on how such
a difficulty can be overcome.

It is well known (see, for example, \cite[\S~1.7]{DM}) that given
graded vector spaces $V=V_0\oplus V_1$ and $W=W_0\oplus W_1$, there
is a bijective correspondence between graded linear maps $V\to W$
and functorial families of $\Lambda_0$\nbd linear maps between
$(\Lambda \otimes V)_0$ and $(\Lambda \otimes W)_0$, for each
Grassmann algebra $\Lambda$. This result goes under the name of
\emph{even rule principle}. Since vector spaces are local models for
manifolds, the even rule principle seems to suggest that each $M_A$
should be endowed with a local structure of $A_0$\nbd module. This
vague idea is made precise with the introduction of the category
$\cAoMan$ of $A_0$\nbd smooth manifolds. We then prove that each
$M_A$ can be endowed in a canonical way with the structure of
$A_0$\nbd manifold. This construction allows to specialize the
arrival category of the functor of $A$\nbd points associated to a
supermanifold $M$ and to define the Weil--Berezin functor of $M$ as
\[
    \begin{aligned}
        M_{(\blank)} \colon \SWA &\to \cAoMan \\
        A &\mapsto M_A \text{.}
    \end{aligned}
\]
In this way we can define a functor
\[
    \begin{aligned}
        \sch \colon \SMan &\to \dfunct{\SWA}{\cAoMan}\\
        M &\mapsto M_{(\blank)}
    \end{aligned}
\]
where $\dfunct{\SWA}{\cAoMan}$ is an appropriate subcategory of
$\funct{\SWA}{\cAoMan}$ that will be specified by definition
\ref{def:Az_nat_transf}. We will call $\sch$ the Shvarts embedding.

As it turns out, this definition of the local functor of points
allows to recover the correct natural transformations without any
artificial condition. More precisely, lemma
\ref{lemma:A0_smooth_nat_tr} will show that the natural
transformations $\alpha_{(\blank)} \colon M_{(\blank)} \to
N_{(\blank)}$ arising from supermanifolds morphisms are exactly
those for which $\alpha_A$ is $A_0$\nbd smooth for each $A$. $\sch$
is then a full and faithful embedding. Moreover $\sch$ preserves the
products. In particular this implies that if $G$ is a group object
in $\SMan$, i.~e.\ it is a super Lie group, then $\sch(G)$ is a
group object too, i.~e.\ it takes values in the category of the
$A_0$\nbd smooth Lie groups.

Exactly as for the functor of points, the functor $\sch$ is not an
equivalence of categories, so that the problem of characterizing the
functors in the image of $\sch$ arises naturally (representability
problem). A criterion characterizing the representable functors is
then given in subsection \ref{subsec:representability}.

\subsection{$A_0$\nbd smooth structure and its consequences}
\label{subsec:A0-smooth}

Preliminary to everything is the following (rather long) definition
of $A_0$\nbd manifold and of the category $\cAoMan$.
For a more detailed discussion see for example \cite{Shurygin} and
references therein.

\begin{definition} \label{def:Az_man}
Fix an even commutative finite dimensional algebra $A_0$ and let $L$
be a finite dimensional $A_0$\nbd module. Let $M$ be a manifold. An
\emph{$L$\nbd chart} on $M$ is a pair $(U,h)$ where $U$ is open in
$M$ and $h \colon U \to L$ is a diffeomorphism onto its image. $M$
is an \emph{$A_0$\nbd manifold} if it admits an $L$\nbd atlas. By this we
mean a family $\set{(U_i,h_i)}_{i \in \cA}$ where $\set{U_i}$ is an
open covering of $M$ and each $(U_i,h_i)$ is an $L$\nbd chart, such
that the differentials
\[
    d(h_i \circ h_j^{-1})_{h_j(x)} \colon T_{h_j(x)}L \isom L \to L \isom T_{h_i(x)}L
\]
are isomorphisms of $A_0$\nbd modules for all $i$, $j$ and $x \in
U_i \cap U_j$.

If $M$ and $N$ are $A_0$\nbd manifolds, a \emph{morphism} $\phi \colon M
\to N$ is a smooth map whose differential is $A_0$\nbd linear at
each point. We also say that such morphism is \emph{$A_0$\nbd smooth}.
We denote by $\AoMan$ the category of $A_0$\nbd
manifolds.

We define also the category  $\cAoMan$ in the following way. The
objects of  $\cAoMan$ are manifolds over generic finite dimensional
commutative algebras. The morphisms in the category are defined as
follows. Denote by $A_0$ and $B_0$  two commutative finite
dimensional algebras, and let $\rho \colon A_0\to B_0$ be  an
algebra morphism. Suppose  $M$ and $N$ are $A_0$ and $B_0$ manifolds
respectively, we say that a morphism $\phi \colon M \to N$ is
\emph{$\rho$\nbd smooth} if $\phi$ is smooth and
\[
    (d\phi)_x(a v) = \rho(a)(d\phi)_x(v)
\]
for each $x \in M$, $v \in T_x(M)$, and $a \in A_0$.
\end{definition}

Notice that $A_0$\nbd linearity always implies $\K$\nbd linearity,
in particular, in the complex case, $A_0$\nbd manifolds are
holomorphic.

The above definition is motivated by the following theorems.
In order to ease the exposition we first give the statements of
the results and we postpone their proofs to the last part of this
subsection.

\begin{theorem} \label{theor:azerolinear}
Let $M$ be a smooth (resp.\ holomorphic) supermanifold, and let $A$
be a real (resp.\ complex) super Weil algebra.
\begin{enumerate}
    \item $M_A$ can be endowed with a unique $A_0$\nbd manifold structure such
    that, for each open subsupermanifold $U$ of $M$ and $s \in \sheaf_M(U)$
    the map defined by
    \[
        \begin{aligned}
            \hat{s} \colon U_A &\to A \\
            x_A &\mapsto x_A(s)
        \end{aligned}
    \]
    is $A_0$\nbd smooth.

    \item If $\phi \colon M\to N$ is a supermanifold
    morphism, then
    \[
        \begin{aligned}
            \phi_A \colon M_A &\to N_A \\
            x_A &\mapsto x_A \circ \phi^*
        \end{aligned}
    \]
    is an $A_0$\nbd smooth morphism.

    \item If $B$ is another super Weil algebra and $\rho \colon A\to B$ is an
    algebra morphism, then
    \[
        \begin{aligned}
            \funcpt{\rho} \colon M_A &\to M_B \\
            x_A &\mapsto \rho \circ x_A
        \end{aligned}
    \]
    is a $\restr{\rho}{A_0}$\nbd smooth map.
\end{enumerate}
\end{theorem}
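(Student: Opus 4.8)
The plan is to reduce everything to the local picture provided by Proposition \ref{prop:valori_assegnati} and Observation \ref{obs:coordinates}, where an $A$\nbd point of a chart $U$ with coordinates $\set{x_i,\theta_j}$ is encoded by a tuple $(\X_1,\dots,\X_p,\T_1,\dots,\T_q)\in A_0^p\times A_1^q$ with $\red{\X_i}$ landing in $\topo{U}$; equivalently $U_A$ is identified with $\topo{U}\times\nil{A_0}^p\times A_1^q$. First I would fix a basis $\set{a_k}$ of $A$ with dual basis $\set{a_k^*}$ and use the components $\pair{a_k^*}{\X_i}$, $\pair{a_k^*}{\T_j}$ as honest real (resp.\ complex) coordinates: writing $A_0$ and $A_1$ as finite dimensional $\K$\nbd vector spaces, $A_0^p\times A_1^q$ becomes a finite dimensional $\K$\nbd vector space, and $\topo{U}\times\nil{A_0}^p\times A_1^q$ is an open subset of the affine subspace $\set{\red{\X_i}\in\topo{U}}$, hence a smooth (resp.\ holomorphic) manifold modeled on the $A_0$\nbd module $L\coloneqq A_0^p\times A_1^q$. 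To get a well-defined $A_0$\nbd manifold structure on all of $M_A$ I would glue these local models along coordinate changes: if $\set{x'_k,\theta'_l}$ is another chart, the transition map on the overlap is exactly the map $\phi_A$ attached to the change-of-coordinates diffeomorphism $\phi$, whose coordinate expression is the formal Taylor series \eqref{eq:nat_tr_from_morph}; one checks by direct differentiation of that series that its differential at each point is an $A_0$\nbd module map (the derivatives produce scalars in $\K$, and the $\nil{\X}^\nu\T^J$ are multiplied $A_0$\nbd linearly), so the cocycle condition of Definition \ref{def:Az_man} holds and the structure is well defined; uniqueness follows since the requirement that every $\hat{s}$ be $A_0$\nbd smooth forces, taking $s$ to be the coordinate functions, that the $L$\nbd charts above are admissible.

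For item (1) it then remains to verify that $\hat{s}\colon U_A\to A$ is $A_0$\nbd smooth for an arbitrary section $s$, not just a coordinate. In a chart, by \eqref{eq:formaltaylor} the map $\hat{s}$ is given by the (finite, by nilpotency) formal Taylor expansion $x_A\mapsto\sum_{\nu,J}\frac{1}{\nu!}\partial^{\abs{\nu}}_x s_J|_{(\red{\X_1},\dots,\red{\X_p})}\,\nil{\X}^\nu\T^J$; this is manifestly smooth (resp.\ holomorphic) in the components of the $\X_i,\T_j$, and differentiating in the directions along $\nil{A_0}^p\times A_1^q$ produces, as above, $A_0$\nbd linear contributions, while differentiating in the base directions $\red{\X_i}$ shifts the derivative index $\nu$ — again the result is $A_0$\nbd linear because multiplication by $\nil{\X}^\nu\T^J$ is. Hence $(d\hat s)_{x_A}$ is $A_0$\nbd linear. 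For item (2), $\phi_A$ composed with the local coordinate functions $\hat{s'_k}$ on $N_A$ is just $\widehat{\phi^*(s'_k)}$ on $M_A$, which is $A_0$\nbd smooth by item (1); since $A_0$\nbd smoothness can be tested against coordinates, $\phi_A$ is $A_0$\nbd smooth. For item (3), the map $\funcpt{\rho}$ is given in local coordinates by $\rho\times\dots\times\rho\colon A_0^p\times A_1^q\to B_0^p\times B_1^q$ as in \eqref{eq:rhoX...Xrho}; its differential is $\restr{\rho}{A_0}\times\dots\times\restr{\rho}{A_0}$, which intertwines the $A_0$\nbd action on the source and the $B_0$\nbd action on the target via $\restr{\rho}{A_0}$, i.e.\ it is $\restr{\rho}{A_0}$\nbd smooth in the sense of Definition \ref{def:Az_man}.

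The main obstacle I expect is not any single computation but the bookkeeping needed to show the local structures glue coherently and independently of all choices — basis of $A$, chart, representative of the germ — and in particular checking the $A_0$\nbd linearity of the differentials of the transition maps \eqref{eq:nat_tr_from_morph} cleanly, since these are compositions of formal Taylor series and one must see through the chain rule that the module structure is preserved. The holomorphic case requires a parallel remark that $\partial/\partial\bar{z}$ never enters, so all the maps built from \eqref{eq:formaltaylor} are holomorphic in the coordinate components; this is where one uses that $A_0$\nbd linearity automatically implies $\K$\nbd linearity, as noted after Definition \ref{def:Az_man}.
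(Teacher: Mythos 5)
Your proposal is correct and follows essentially the same route as the paper: transport an atlas of $M$ to charts $(U_i)_A\isom\topo{U_i}\times\nil{A_0}^p\times A_1^q$, check that the transition maps $(h_i\circ h_j^{-1})_A$ and the maps in items (2)--(3) are $A_0$\nbd smooth by differentiating the formal Taylor expressions \eqref{eq:nat_tr_from_morph}, \eqref{eq:formaltaylor} and \eqref{eq:rhoX...Xrho}. The only cosmetic difference is that you re-derive inline the ``supermanifold morphism $\Rightarrow$ $A_0$\nbd smooth'' direction of lemma \ref{lemma:A0_smooth_nat_tr} (the matching of the $\red{\Y}$- and $\nil{\Y}$-contributions via the Taylor-coefficient recursion), whereas the paper simply cites that lemma.
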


The above theorem says that supermanifolds morphisms give rise to
morphisms in the $\AoMan$ category.  From this point of view the next
definition is quite natural.

\begin{definition} \label{def:Az_nat_transf}
We call $\dfunct{\SWA}{\cAoMan}$ the subcategory of
$\funct{\SWA}{\cAoMan}$ whose objects are the same and whose
morphisms $\alpha_{(\blank)}$ are the natural transformations $\cF
\to \cG$, with $\cF, \cG \colon \SWA \to \cAoMan$, such that
\[
    \alpha_A \colon \cF(A) \to \cG(A)
\]
is $A_0$\nbd smooth for each $A\in \SWA$.
\end{definition}

Theorem \ref{theor:azerolinear}
allows us to give more structure to the arrival category of
the functor of $A$\nbd points. More precisely we have  the following
definition, which is the central definition in our treatment of
the local functor of points.

\begin{definition}
Let $M$ be a supermanifold. We define the \emph{Weil--Berezin
functor} of $M$ as
\begin{equation} \label{eq:WB_functor}
    \begin{aligned}
        M_{(\blank)} \colon \SWA &\to \cAoMan \\
        A &\mapsto M_A \text{.}
    \end{aligned}
\end{equation}
Moreover we define the \emph{Shvarts embedding}
\[
    \begin{aligned}
        \sch \colon \SMan &\to \dfunct{\SWA}{\cAoMan} \\
        M &\mapsto M_{(\blank)} \text{.}
    \end{aligned}
\]
\end{definition}

We can now state one of the main results in this paper; it tells that
the Weil-Berezin functor $M_{(\blank)}$
recaptures all the information contained in
the supermanifold $M$.

\begin{theorem} \label{theor:full_and_faithful}
$\sch$ is a full and faithful embedding, i.~e.\ if $M$ and $N$ are
two supermanifolds, and $M_{(\blank)}$ and $N_{(\blank)}$ their
Weil--Berezin functors, then
\[
    \Hom_\SMan(M,N) \isom \Hom_{ \dfunct{\SWA}{\cAoMan}}(M_{(\blank)},N_{(\blank)}) \text{.}
\]
\end{theorem}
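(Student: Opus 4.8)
The plan is to establish the two directions of the bijection separately: faithfulness (injectivity of $\sch$ on morphisms) and fullness (surjectivity). Faithfulness is the easier half. Given $\phi, \psi \colon M \to N$ with $\sch(\phi) = \sch(\psi)$, I want to conclude $\phi = \psi$. A morphism of supermanifolds is determined by its behaviour on charts, and on a chart $V$ of $N$ with coordinates $\{x'_k, \theta'_l\}$ it is determined by the pullbacks $\phi^*(x'_k), \phi^*(\theta'_l) \in \sheaf_M(\phi^{-1}(V))$, which in turn are determined by their germs at every point (Hadamard's lemma). So it suffices to reconstruct these germs from $\sch(\phi)$. The key device is the super Weil algebra $\hat{A}$ from \eqref{eq:hatA}, namely $\hat{A} = (\K[z_1,\dots,z_p]\otimes\ext{\zeta_1,\dots,\zeta_q})/\maxid^s$: for a fixed base point $x \in \topo{M}$ and a fixed chart $U$ around it, the $\hat{A}$\nbd point $y_x = (\red{x_1}(x)+z_1,\dots,\zeta_q)$ has the property that $\phi_{\hat{A}}(y_x)$ encodes the Taylor expansion of $\phi^*(x'_k)$ at $x$ up to order $s-1$, by formula \eqref{eq:formaltaylor} applied as in \eqref{eq:nat_tr_from_morph}. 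Letting $s \to \infty$ recovers all the germs, hence $\phi^*$ on coordinates, hence $\phi$. This argument uses only part \itref{2} of theorem \ref{theor:azerolinear} (we do not even need the $A_0$\nbd smoothness here, just the underlying set-level natural transformation).

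For fullness I take a natural transformation $\alpha_{(\blank)} \colon M_{(\blank)} \to N_{(\blank)}$ in $\dfunct{\SWA}{\cAoMan}$ — so each $\alpha_A$ is $A_0$\nbd smooth — and I must produce a supermanifold morphism $\phi$ with $\sch(\phi) = \alpha_{(\blank)}$. First I build $\topo{\phi} \colon \topo{M} \to \topo{N}$ as $\alpha_\K$, using $M_\K \isom \topo{M}$, $N_\K \isom \topo{N}$; naturality with respect to $\K \hookrightarrow A \to \K$ shows $\alpha_A$ covers $\topo{\phi}$ on base points. The smoothness (resp.\ holomorphy) of $\topo{\phi}$ will come out of the local construction below. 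The heart of the matter is local: restricting to a coordinate chart $U \cong$ superdomain in $\K^{p|q}$ of $M$ mapping into a chart $V \cong$ superdomain in $\K^{m|n}$ of $N$, proposition \ref{prop:formal_series} tells me that the restricted natural transformation $U_{(\blank)} \to V_{(\blank)}$ (of plain $\Sets$\nbd valued functors) corresponds to a tuple of formal series $\F = (F_1,\dots,F_{m+n})$ with coefficients in $\functions(\topo{U},\K)$. I must upgrade this to a genuine chart-theorem morphism, i.e.\ show the coefficient functions $f^k_{\nu,J}$ are smooth (resp.\ holomorphic) and that the series is actually the finite-order Taylor expansion of a single section $\phi^*(x'_k)\in\sheaf(U)$. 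This is exactly where the $A_0$\nbd smoothness hypothesis enters and does the work: evaluating $\alpha_{\hat{A}}$ on the point $y_x$ with $x$ now varying over $\topo{U}$ gives a map $\topo{U} \to \hat{A}_0^m \times \hat{A}_1^n$ whose components, in the $\cAoMan$ coordinates on $\hat{A}$, are precisely the truncated-Taylor-coefficient functions $x \mapsto a^k_{\nu,J}(x)$; since $\alpha_{\hat{A}}$ is $\hat{A}_0$\nbd smooth it is in particular a smooth (resp.\ holomorphic) map of classical manifolds, so each $a^k_{\nu,J}$ is smooth (resp.\ holomorphic). Moreover the $A_0$\nbd linearity of the differential forces the coefficients for different orders $s$ to be mutually compatible and to be exactly the partial derivatives $\frac{1}{\nu!}\partial^{|\nu|} f^k_{0,J}/\partial x^\nu$ of the lowest-order coefficients — this is the incarnation, at the level of jets, of the even rule principle invoked in the introduction. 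Hence $F_k$ is the Taylor series of the single section $s_k \coloneqq \sum_J f^k_{0,J}\,\theta^J \in \sheaf(U)_{p(k)}$, the constraint \eqref{eq:restricions} becomes the chart-theorem condition $(\red{s_1}(x),\dots,\red{s_m}(x)) \in \topo{V}$, and proposition \ref{prop:morphisms} produces a superdomain morphism $\phi_U \colon U \to V$ with $\phi_U^*(x'_k) = s_k$, $\phi_U^*(\theta'_l) = s_{m+l}$. By construction (and \eqref{eq:formaltaylor}) $(\phi_U)_A = \alpha_A|_{U_A}$ for all $A$.

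It then remains to glue: on overlaps $U \cap U'$ the two locally defined morphisms induce the same natural transformation, hence by the faithfulness already proved (applied to the superdomain $U\cap U'$) they coincide, so the $\phi_U$ patch to a global morphism $\phi \colon M \to N$ with $\sch(\phi) = \alpha_{(\blank)}$; this also retroactively shows $\topo\phi$ is smooth (resp.\ holomorphic). In the smooth category one can alternatively shortcut part of this using $M_A \isom \Hom_{\SAlg_\R}(\sheaf(M),A)$ from proposition \ref{prop:smoothcase} together with \eqref{eq:pullback_of_global_sections}, but the argument above has the advantage of working uniformly in the holomorphic case. I expect the main obstacle to be precisely the upgrade step in the fullness direction: extracting \emph{smoothness/holomorphy} of the coefficient functions and the \emph{jet-compatibility across varying order} $s$ from the single hypothesis that every $\alpha_A$ is $A_0$\nbd smooth — that is, making rigorous the even rule heuristic that $A_0$\nbd linearity of differentials is exactly the constraint that cuts the huge space of formal-series natural transformations (proposition \ref{prop:formal_series}) down to the image of $\sch$. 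The gluing and the faithfulness parts are, by contrast, routine once the Weil algebra $\hat A$ is in hand.
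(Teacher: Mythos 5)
Your proposal is correct and follows essentially the same route as the paper: the local heart of the matter is the combination of proposition \ref{prop:formal_series} with the computation showing that $A_0$\nbd smoothness forces the relations $\partial_i f^k_{\nu,J}=(\nu_i+1)f^k_{\nu+\delta_i,J}$, which is exactly the content of lemma \ref{lemma:A0_smooth_nat_tr}, and the global statement is then obtained by restricting to charts and checking that the local morphisms agree on overlaps. The paper packages faithfulness inside the uniqueness of the formal series $\F$ rather than as a separate jet-reconstruction argument, but this is a difference of presentation, not of substance.
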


\begin{corollary} 
Two supermanifolds are isomorphic if and only if
their Weil-Berezin functors are isomorphic.
\end{corollary}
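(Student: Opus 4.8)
The plan is to deduce this statement directly from Theorem \ref{theor:full_and_faithful} using standard categorical nonsense, namely the fact that a fully faithful functor reflects isomorphisms. First I would establish the ``only if'' direction: if $\phi \colon M \to N$ is an isomorphism of supermanifolds, then applying the functor $\sch$ gives a morphism $\sch(\phi) \colon M_{(\blank)} \to N_{(\blank)}$ in $\dfunct{\SWA}{\cAoMan}$, and since functors preserve isomorphisms, $\sch(\phi^{-1})$ is a two-sided inverse to $\sch(\phi)$. Hence the Weil--Berezin functors are isomorphic. (Alternatively, one may argue more explicitly: $\phi_A \colon M_A \to N_A$ is an $A_0$\nbd smooth bijection whose inverse is $(\phi^{-1})_A$, which is again $A_0$\nbd smooth by Theorem \ref{theor:azerolinear}, so each $\phi_A$ is an isomorphism in $\cAoMan$, and these fit together into an isomorphism of functors.)

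For the ``if'' direction, suppose $\Psi \colon M_{(\blank)} \to N_{(\blank)}$ is an isomorphism in $\dfunct{\SWA}{\cAoMan}$, with inverse $\Psi^{-1}$. By Theorem \ref{theor:full_and_faithful}, $\sch$ is full, so there exist supermanifold morphisms $\phi \colon M \to N$ and $\psi \colon N \to M$ with $\sch(\phi) = \Psi$ and $\sch(\psi) = \Psi^{-1}$. Then $\sch(\psi \circ \phi) = \sch(\psi) \circ \sch(\phi) = \Psi^{-1} \circ \Psi = \id_{M_{(\blank)}} = \sch(\id_M)$, and since $\sch$ is faithful we conclude $\psi \circ \phi = \id_M$; symmetrically $\phi \circ \psi = \id_N$. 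Hence $\phi$ is an isomorphism of supermanifolds.

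There is no real obstacle here: the Corollary is a formal consequence of full faithfulness, and the only point requiring a little care is checking that $\sch$ is genuinely a functor (so that it sends identities to identities and respects composition), which is implicit in Theorem \ref{theor:azerolinear} together with the definition of $\dfunct{\SWA}{\cAoMan}$. If one wishes to avoid even invoking fullness, the ``if'' direction can also be phrased as: a fully faithful functor reflects isomorphisms, and this is a one-line diagram chase. I would present the argument in the compact categorical form above, since all the substantive content has already been carried by Theorems \ref{theor:azerolinear} and \ref{theor:full_and_faithful}.
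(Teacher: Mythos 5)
Your argument is correct and is exactly the route the paper intends: the corollary is stated without proof as an immediate consequence of Theorem \ref{theor:full_and_faithful}, i.e.\ the standard fact that a fully faithful functor reflects isomorphisms. Your write-up simply makes this explicit, so there is nothing to add.
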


\begin{observation}
If we considered the bigger category $\funct{\SWA}{\cAoMan}$ instead
of $\dfunct{\SWA}{\cAoMan}$, the above theorem is no longer true. In
example \ref{exampla:counter-example_nat_tr} we examined a natural
transformation between functors from $\SWA$ to $\Sets$, which did not
come from a supermanifolds morphism.
If, in the same example, $\phi$ is chosen to be smooth,
we obtain a morphism in $\funct{\SWA}{\cAoMan}$ that is not in
$\dfunct{\SWA}{\cAoMan}$. Indeed, it is not difficult to check that
if $\pi_A \colon A \to A$ is given by $a \mapsto \red{a}$, then
$\alpha_A$ (in the example) is $\pi_{A_0}$\nbd linear.
\end{observation}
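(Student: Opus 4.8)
The plan is to turn Example~\ref{exampla:counter-example_nat_tr} into an explicit obstruction to the fullness of $\sch$ once its target is enlarged from $\dfunct{\SWA}{\cAoMan}$ to $\funct{\SWA}{\cAoMan}$. With $M,N$ ordinary (purely even) smooth manifolds and $\phi\colon M\to N$ a \emph{smooth} map, I would establish three facts about the transformation $\alpha_{(\blank)}$, $\alpha_A(x_A)=\ev_{\phi(\red{x_A})}$: first, that each $\alpha_A$ is a morphism in $\cAoMan$, so that $\alpha_{(\blank)}$ genuinely lives in $\funct{\SWA}{\cAoMan}$; second, that $\alpha_A$ fails to be $A_0$\nbd smooth as soon as $A_0\neq\K$, so that $\alpha_{(\blank)}\notin\dfunct{\SWA}{\cAoMan}$; and third, that $\alpha_{(\blank)}$ is not $\sch(\psi)$ for any supermanifold morphism $\psi$. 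Together these show that $\sch\colon\SMan\to\funct{\SWA}{\cAoMan}$ is not full, which is exactly the failure of Theorem~\ref{theor:full_and_faithful} for the larger category.

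For the first two points I would argue in coordinates as in Observation~\ref{obs:coordinates}. In a chart $U$ of $M$ one has $M_A\isom\{\X\in A_0^p : \red{\X}\in\topo{U}\}$, in a chart of $N$ one has $N_A\isom A_0^m$, and the base point of $x_A$ has coordinates $\red{\X}=(\red{\X_1},\dots,\red{\X_p})$. Since $\ev_{\phi(\red{x_A})}$ factors through $\K\hookrightarrow A$, its coordinates are the \emph{constants} $\phi_k(\red{\X})\in\K\subseteq A_0$, so in coordinates $\alpha_A$ is $\X\mapsto j\bigl(\phi(\red{\X})\bigr)$, where $j$ is the componentwise inclusion of $\K$ into $A_0$. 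This is the composite of the $\K$\nbd linear reduction $\X\mapsto\red{\X}$ with the smooth $\phi$, hence smooth; naturality in $A$ is the content already recorded in Example~\ref{exampla:counter-example_nat_tr} (it holds because $\funcpt{\rho}$ preserves base points and fixes $\K$). Differentiating gives $(d\alpha_A)_{\X}(v)=j\bigl((d\phi)_{\red{\X}}(\red{v})\bigr)$, and because $\red{(a v)}=\red{a}\,\red{v}$ for $a\in A_0$ one reads off
\[
    (d\alpha_A)_{\X}(a v)=\red{a}\,(d\alpha_A)_{\X}(v)=\pi_{A_0}(a)\,(d\alpha_A)_{\X}(v),
\]
so $\alpha_A$ is $\pi_{A_0}$\nbd smooth. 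As $\pi_{A_0}\colon A_0\to A_0$, $a\mapsto\red{a}$, is an algebra morphism, this exhibits $\alpha_A$ as a legitimate $\cAoMan$\nbd morphism; but choosing $\phi$ with $(d\phi)_{\red{\X}}\neq 0$ and $a$ with $\nil{a}\neq 0$ forces $\nil{a}\,(d\alpha_A)_{\X}(v)\neq 0$, hence $\pi_{A_0}(a)\,(d\alpha_A)_{\X}(v)\neq a\,(d\alpha_A)_{\X}(v)$, so $\alpha_A$ is not $A_0$\nbd smooth when $A_0\neq\K$, and thus $\alpha_{(\blank)}\notin\dfunct{\SWA}{\cAoMan}$.

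Finally, for the failure of fullness I would compare $\alpha_{(\blank)}$ with the transformation $\phi_{(\blank)}$ induced by $\phi$. Suppose $\alpha_{(\blank)}=\sch(\psi)=\psi_{(\blank)}$ for some $\psi\in\Hom_\SMan(M,N)$. Evaluating at $A=\K$ and using $M_\K\isom\topo{M}$, $N_\K\isom\topo{N}$ gives $\topo{\psi}=\phi$; since for purely even manifolds a morphism is determined by its underlying (automatically smooth) map, $\psi=\phi$ and so $\psi_A=\phi_A$. But $\phi_A(x_A)=x_A\circ\phi^*$ has, via the formal Taylor expansion~\eqref{eq:formaltaylor}, a nonzero nilpotent part whenever $\nil{\X}\neq0$ and $(d\phi)_{\red{\X}}\neq0$, whereas $\alpha_A(x_A)$ is $\K$\nbd valued; hence $\phi_A\neq\alpha_A$, a contradiction. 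Therefore $\alpha_{(\blank)}$ has no $\sch$\nbd preimage and $\sch$ is not full into $\funct{\SWA}{\cAoMan}$. The one genuinely substantive step is the differential computation showing that reduction forces $\pi_{A_0}$\nbd linearity rather than $A_0$\nbd linearity; everything else is bookkeeping with base points and the fact that $\ev$ annihilates nilpotents.
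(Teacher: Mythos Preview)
Your proposal is correct and carries out exactly the verification the paper leaves implicit: the observation merely asserts that $\alpha_A$ is $\pi_{A_0}$\nbd smooth, and your coordinate computation of the differential $(d\alpha_A)_{\X}(v)=j\bigl((d\phi)_{\red{\X}}(\red v)\bigr)$ is the natural way to confirm this and to see that $A_0$\nbd smoothness fails as soon as $d\phi\neq 0$ somewhere and $\nil{A_0}\neq 0$. Your third step is slightly more than strictly needed---once $\alpha_A$ is shown not to be $A_0$\nbd smooth, Theorem~\ref{theor:azerolinear}(2) already prevents $\alpha_{(\blank)}$ from lying in the image of $\sch$---but the direct comparison with $\phi_A$ via the nilpotent part is a perfectly good alternative.
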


We now examine the proofs of theorems \ref{theor:azerolinear} and
\ref{theor:full_and_faithful}. First we need to
prove theorem \ref{theor:full_and_faithful} in the case of two
superdomains $U$ and $V$ in $\K^{p|q}$ and $\K^{m|n}$ respectively
(lemma \ref{lemma:A0_smooth_nat_tr}).
As usual, if $A$ is a super Weil algebra, $U_A$ and $V_A$ are
identified with $\topo{U} \times \nil{A_0}^p \times A_1^q$ and
$\topo{V} \times \nil{A_0}^{m} \times A_1^{n}$ (see observation
\ref{obs:coordinates}). Then they have a natural structure of
open subsets of $A_0$\nbd modules. The next lemma is due to A.
A. Voronov in \cite{Voronov} and it is the local version of
theorem \ref{theor:full_and_faithful}.

\begin{lemma} \label{lemma:A0_smooth_nat_tr}
A natural transformation $\alpha_{(\blank)} \colon U_{(\blank)} \to
V_{(\blank)}$ comes from a supermanifold morphism $U \to V$ if and
only if $\alpha_A \colon U_A \to V_A$
is $A_0$\nbd smooth for each $A$.
\end{lemma}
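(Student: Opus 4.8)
The statement is an "if and only if" between natural transformations $\alpha_{(\blank)}$ that arise from superdomain morphisms $U \to V$ and those for which each $\alpha_A$ is $A_0$-smooth. One direction is essentially already available: if $\alpha_{(\blank)}$ comes from a supermanifold morphism $\phi$, then $\alpha_A = \phi_A$ and theorem \ref{theor:azerolinear}(2) tells us $\phi_A$ is $A_0$-smooth for every $A$. So the content is the converse: assuming $\alpha_A$ is $A_0$-smooth for all $A$, produce a morphism $\phi \colon U \to V$ with $\phi_A = \alpha_A$.

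\medskip

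\textbf{Strategy for the converse.} I would start from proposition \ref{prop:formal_series}, which already identifies \emph{every} natural transformation $\alpha_{(\blank)} \colon U_{(\blank)} \to V_{(\blank)}$ with a list of formal series $\F = (F_1,\ldots,F_{m+n})$, $F_k = \sum_{\nu,J} f^k_{\nu,J} X^\nu \Theta^J$ with $f^k_{\nu,J} \in \functions(\topo{U},\K)$, subject to the base-point constraint \eqref{eq:restricions}. Comparing with eq.\ \eqref{eq:nat_tr_from_morph} (and its odd analogue), a natural transformation arises from a superdomain morphism $\phi \colon U \to V$ precisely when the formal series $F_k$ is the genuine Taylor expansion of a smooth (resp.\ holomorphic) function: that is, when there exist sections $s_{k,J} \in \sheaf_M(\topo{U})$ — smooth or holomorphic functions on $\topo{U}$ — with $\phi^*(x'_k) = \sum_J s_{k,J}\theta^J$, so that $f^k_{\nu,J} = \tfrac{1}{\nu!}\,\partial^{|\nu|} s_{k,J}/\partial x^\nu$. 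Thus the whole lemma reduces to the following analytic claim: \emph{if $\alpha_A$ is $A_0$-smooth for every super Weil algebra $A$, then each coefficient function $f^k_{0,J}$ (the "leading" coefficient for each monomial $\Theta^J$) is smooth (resp.\ holomorphic), and the higher coefficients $f^k_{\nu,J}$ are exactly its scaled partial derivatives.}

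\medskip

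\textbf{Carrying out the reduction.} To extract this I would feed $\alpha_{(\blank)}$ specific test algebras, as in the proof of proposition \ref{prop:formal_series}. Fixing a base point $x_0 \in \topo{U}$ and working near it, apply $A_0$-smoothness of $\alpha_A$ to the algebra of dual numbers $\R(x)$ (example \ref{example:DN}) adjoined with the $\zeta_j$'s, i.e.\ $A = (\K[z_1,\dots,z_p]\otimes\ext{\zeta_1,\dots,\zeta_q})/\maxid^2$: here $A_0$-smoothness of $\alpha_A$ as a map between open subsets of $A_0$-modules forces the first-order part of each $F_k$ in the even variables to be $A_0$-linear in the appropriate sense, which — by the standard dual-numbers identification of derivatives — says precisely that $f^k_{0,J}$ is differentiable in the ordinary sense with derivative $f^k_{e_i,J}$ (up to the factorial). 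Iterating with $\maxid^{s}$ for $s=2,3,\dots$ and using that $A_0$-smoothness is required for \emph{all} $A$ simultaneously, one upgrades this to: $f^k_{0,J}$ is $C^\infty$ (resp.\ holomorphic, using that in the complex case $A_0$-linearity includes $\C$-linearity, hence complex differentiability) and $f^k_{\nu,J} = \tfrac{1}{\nu!}\partial^{|\nu|}f^k_{0,J}/\partial x^\nu$. Once the $f^k_{0,J}$ are genuine sections of $\sheaf_M$, the tuple $(s_{k,J})_{k,J}$ with $s_{k,J} = f^k_{0,J}$ defines, by the chart theorem \ref{prop:morphisms}(3) together with the base-point condition \eqref{eq:restricions}, a morphism $\phi \colon U \to V$; and by construction eq.\ \eqref{eq:nat_tr_from_morph} shows $\phi_A = \alpha_A$ for all $A$, so $\alpha_{(\blank)}$ comes from $\phi$. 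Uniqueness of $\phi$ is immediate since $\phi$ is determined by $\phi_\K$ together with these coefficient functions, or simply from the faithfulness built into proposition \ref{prop:formal_series}.

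\medskip

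\textbf{Main obstacle.} The delicate point is the regularity bootstrap: going from "$\alpha_A$ is $A_0$-smooth for every $A$" to "the coefficient functions $f^k_{0,J}$ are $C^\infty$ and their higher Taylor coefficients are the prescribed $f^k_{\nu,J}$." One has to choose the test algebras so that $A_0$-linearity of the differential $(d\alpha_A)_{x_A}$ at a generic point records \emph{all} partial derivatives of all orders, not merely the first, and one must check that the nilpotent directions $z_i$ are genuinely being differentiated in (so that $A_0$-linearity is not vacuous). The smooth case needs a little care because $A_0$-smoothness is an infinitesimal (differential) condition, so one must argue that matching all formal derivatives on a suitable family of infinitesimal thickenings, for every $x_0$, forces honest $C^\infty$ regularity — this is where Hadamard's lemma \ref{lemma:polynomials} and the explicit remainder estimate (as invoked in the proof of lemma \ref{lemma:SWA}) do the work. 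The holomorphic case is comparatively easy, since complex differentiability of each order propagates automatically. Everything else — the $A_0$-smoothness of morphism-induced transformations, and the translation through proposition \ref{prop:formal_series} and the chart theorem — is formal.
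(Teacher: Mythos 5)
Your proposal follows essentially the same route as the paper's proof: reduce via proposition \ref{prop:formal_series} to the list of formal series $\F$, compare with eq.~\eqref{eq:nat_tr_from_morph}, and exploit $A_0$\nbd linearity of the differential in the nilpotent even directions on the test algebras $\kspoly{p}{q}/\maxid^{s}$ to force the recursion $\partial_i f^k_{\nu,J}=(\nu_i+1)f^k_{\nu+\delta_i,J}$, hence smoothness of the coefficients and the Taylor form, after which the chart theorem produces $\phi$. The one caveat is that you should not invoke theorem \ref{theor:azerolinear} for the forward implication, since the paper derives that theorem \emph{from} this lemma; instead observe directly, as the paper does in its closing paragraph, that a transformation of the form \eqref{eq:nat_tr_from_morph} is polynomial in the nilpotent and odd variables with smooth coefficients and is $A_0$\nbd linear in each variable separately.
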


\begin{proof}
Due to prop.\ \ref{prop:formal_series} we know that
$\alpha_{(\blank)}$ is determined by $m$ even and $n$ odd formal
series of the form $F_k = \sum_{\nu,J} f^k_{\nu,J} X^\nu \Theta^J$
with $f^k_{\nu,J}$ arbitrary functions in $p$ variables satisfying
eq.\ \eqref{eq:restricions}. Moreover as we have seen
in the discussion before example \ref{exampla:counter-example_nat_tr}
a supermanifold morphism $\phi \colon U \to V$ gives rise to a natural
transformation $\phi_A \colon U_A \to V_A$ whose components are of
the form of eq.\ \eqref{eq:nat_tr_from_morph}.

Let us suppose that $\alpha_A$ is $A_0$\nbd smooth. This clearly
happens if and only if all its components are $A_0$\nbd smooth and
the smoothness request for all $A$ forces all coefficients
$f^k_{\nu,J}$ to be smooth.

Let ${(\alpha_A)}_k$ be the $k$\nbd th component of $\alpha_A$ and
let $i \in \set{1,\ldots,p}$. We want to study
\[
    \begin{aligned}
        \omega \colon A_0 &\to A_j \\
        \X_i &\mapsto {(\alpha_A)}_k(\X_1,\ldots,\X_i,\ldots,\X_p,\T_1,\ldots,\T_q) \text{,}
    \end{aligned}
\]
supposing the other coordinates fixed ($j = 0$ if $1 \leq k \leq p$
or $j = 1$ if $p < k \leq p+q$). Since $\nil{\X_i} \in A_0$ commutes
with all elements of $A$,
\begin{equation} \label{eq:omega}
    \omega(\X_i) = \sum_{t \geq 0} a_t(\red{\X_i}) \nil{\X_i}^t
\end{equation}
with
\begin{equation} \label{eq:a_t}
    a_t(\red{\X_i}) \coloneqq \sum_{\substack{\nu,J \\ \nu_i=t}} f^k_{\nu,J}(\red{\X_1},\ldots,\red{\X_i},\ldots,\red{\X_p}) \nil{\X}^{(\nu-t\delta_i)} \T^J
\end{equation}
($t\delta_i$ is the element of $\N^p$ with $t$ at the $i$\nbd th
component and $0$ elsewhere).

If $\Y = \red{\Y} + \nil{\Y} \in A_0$ and $\omega$ is $A_0$\nbd
smooth
\begin{equation} \label{eq:Alin_diff_1}
    \omega(\X_i + \Y) - \omega(\X_i) = d\omega_{\X_i}(\Y) + o(\Y) = (\red{\Y} + \nil{\Y}) d\omega_{\X_i}(1_A) + o(\Y)
\end{equation}
(where $1_A$ is the unit of $A$). On the other hand, from eq.\
\eqref{eq:omega} and defining
\begin{equation} \label{eq:a'_t}
    a_t'(\red{\X_i}) \coloneqq \sum_{\substack{\nu,J \\ \nu_i=t}}
    \partial_i f^k_{\nu,J}(\red{\X_1},\ldots,\red{\X_i},\ldots,\red{\X_p}) \nil{\X}^{(\nu-t\delta_i)} \T^J
\end{equation}
($\partial_i$ denotes the partial derivative respect to the $i$\nbd th
variable), we have
\begin{equation} \label{eq:Alin_diff_2}
    \begin{aligned}
        \omega(\X_i + \Y) - \omega(\X_i)
        &= \sum_{t \geq 0} a_t (\red{\X_i} + \red{\Y}) (\nil{\X_i} + \nil{\Y})^t - \sum_{t \geq 0} a_t(\red{\X_i}) \nil{\X_i}^t \\
        &= \sum_{t \geq 0} \left( a_t'(\red{\X_i}) \red{\Y} \nil{\X_i}^t +
        a_t(\red{\X_i}) t \nil{\X_i}^{t-1} \nil{\Y} + o(\Y) \right) \\
        &= \red{\Y} \sum_{t \geq 0} a_t'(\red{\X_i}) \nil{\X_i}^t + \nil{\Y} \sum_{t \geq 0} (t+1) a_{t+1}(\red{\X_i}) \nil{\X_i}^t + o(\Y) \text{.}
    \end{aligned}
\end{equation}
Thus, comparing eq.\ \eqref{eq:Alin_diff_1} and
\eqref{eq:Alin_diff_2}, we get that the identity
\[
    (\red{\Y} + \nil{\Y}) d\omega_{\X_i}(1_A)
    = \red{\Y} \sum_{t \geq 0} a_t'(\red{\X_i}) \nil{\X_i}^t + \nil{\Y} \sum_{t \geq 0} (t+1) a_{t+1}(\red{\X_i}) \nil{\X_i}^t
\]
must hold and, consequently, also the following relations must be
satisfied:
\begin{align*}
    \sum_{t \geq 0} a_t'(\red{\X_i}) \nil{\X_i}^t
    &= \sum_{t \geq 0} (t+1) a_{t+1}(\red{\X_i}) \nil{\X_i}^t \\
\intertext{and then, from eq.\ \eqref{eq:a_t} and \eqref{eq:a'_t},}
    \sum_{\nu,J} \partial_i f^k_{\nu,J}(\red{\X_1},\ldots,\red{\X_p}) \nil{\X}^\nu \T^J
    &= \sum_{\nu,J} (\nu_i + 1) f^k_{\nu + \delta_i,J}(\red{\X_1},\ldots,\red{\X_p}) \nil{\X}^\nu \T^J
    \text{.}
\end{align*}

Let us fix $\nu \in \N^p$ and $J \subseteq \set{1,\ldots,q}$. If $A
= \kspoly{p}{q}/\maxid^s$ with $s > \max(\abs{\nu}+1,q)$ ($\maxid$
is as usual the maximal ideal of polynomials without constant term),
we note that necessarily, due to the arbitrariness of
$(\X_1,\ldots,\T_q)$,
\[
    \partial_i f^k_{\nu,J} = (\nu_i + 1) f^k_{\nu + \delta_i,J}
\]
and, by recursion, ${(\alpha_A)}_k$ is of the form of
\eqref{eq:nat_tr_from_morph} with $s_{k,J} = f^k_{0,J}$.

Conversely, let ${(\alpha_A)}_k$ be of the form of eq.\
\eqref{eq:nat_tr_from_morph}. {It is $A_0$\nbd smooth
if and only if it is $A_0$\nbd smooth in each variable. It is
$A_0$\nbd smooth in the even variables for what has been said above
and in the odd variables since it is polynomial in them.}
\end{proof}

In particular the above discussion
shows also that any superdiffeomorphism $U \to U$ gives rise, for each
$A$, to an $A_0$\nbd smooth diffeomorphism $U_A \to U_A$ and then
each $U_A$ admits a canonical structure of $A_0$\nbd manifold.

We now use the results obtained for superdomains in order to prove
theorems \ref{theor:azerolinear} and \ref{theor:full_and_faithful}
in the general supermanifold case. We need to recall the following
elementary result from ordinary differential geometry.

\begin{lemma}
Let $X$ be a set. Suppose a countable covering $\set{U_i}$ and a
collection of injective maps $h_i \colon U_i \to \K^n$ are given,
satisfying the following conditions:
\begin{enumerate}
    \item for each $i$ and $j$, $h_i(U_i \cap U_j)$ is open in $\K^n$;
    \item $h_j \circ h_i^{-1} \colon h_i(U_i \cap U_j) \to h_j(U_i \cap U_j)$ is a diffeomorphism;
    \item for each $x$ and $y$ in $X$, $x \neq y$, there exists $V_x \subseteq U_i$
    and $V_y \subseteq U_j$ such that $x \in V_x$, $y \in V_y$, $V_x \cap V_y = \emptyset$, $h_i(V_x)$ and $h_j(V_y)$ both open.
\end{enumerate}
Then $X$ admits a unique smooth manifold structure such that
$\set{(U_i,h_i)}$ defines an atlas over it.
\end{lemma}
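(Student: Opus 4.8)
The plan is to manufacture the topology on $X$ by hand, then read the smooth structure off the given atlas, and finally argue uniqueness.

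\emph{Construction of the topology.} Declare a subset $W \subseteq X$ to be open precisely when $h_i(W \cap U_i)$ is open in $\K^n$ for every $i$. Using the injectivity of the $h_i$ (so that images of intersections are intersections of images) together with hypothesis~(1) applied with $i=j$ (so that $h_i(U_i)$ itself is open), one checks at once that $\emptyset$, $X$, arbitrary unions and finite intersections of such sets are again of this form; hence this defines a topology on $X$ in which each $U_i$ is open. I claim that $h_i \colon U_i \to h_i(U_i)$ is a homeomorphism. One direction is immediate from the definition of the topology; for the other, suppose $W \subseteq U_i$ with $h_i(W)$ open. For any $j$ we have $W \cap U_j \subseteq U_i \cap U_j$, so
\[
    h_j(W \cap U_j) = (h_j \circ h_i^{-1})\big( h_i(W) \cap h_i(U_i \cap U_j) \big),
\]
which is open in $\K^n$ by hypotheses~(1) and~(2). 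Hence $W$ is open in $X$, proving the claim.

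\emph{Hausdorff and second countability.} Hypothesis~(3) says exactly that any two distinct points of $X$ admit disjoint open neighbourhoods, so $X$ is Hausdorff. Since $\set{U_i}$ is a countable cover, each $U_i$ is homeomorphic to an open subset of $\K^n$ and hence second countable, and a countable union of second-countable open subspaces is second countable; thus $X$ is a (real or complex, according to $\K$) topological manifold.

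\emph{Smooth structure and uniqueness.} By construction each $(U_i,h_i)$ is a chart, the $U_i$ cover $X$, and hypothesis~(2) is precisely the statement that the transition maps are diffeomorphisms, so $\set{(U_i,h_i)}$ is a smooth atlas; it generates a unique maximal smooth atlas, which is a smooth manifold structure on $X$ having the $(U_i,h_i)$ as charts. Conversely, if $X$ carries any smooth structure for which every $(U_i,h_i)$ is a chart, then each $h_i$ is a homeomorphism onto an open set, and this forces the underlying topology to agree with the one constructed above (a set is open iff its intersection with each $U_i$ has open $h_i$-image); moreover the corresponding maximal atlas contains, hence equals, the maximal atlas generated by $\set{(U_i,h_i)}$. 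This gives both existence and uniqueness.

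The only genuinely delicate point is showing that each $h_i$ is a homeomorphism onto its image for the ad hoc topology, which is exactly where hypotheses~(1) and~(2) are used in tandem; the remainder is the familiar chart-gluing argument from ordinary differential geometry.
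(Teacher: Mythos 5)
Your proof is correct. The paper explicitly leaves this lemma to the reader, so there is no proof to compare against; your argument — defining the topology by declaring $W$ open iff every $h_i(W\cap U_i)$ is open, using hypotheses (1) and (2) to show each $h_i$ is a homeomorphism onto its open image, reading Hausdorffness off (3) and second countability off the countable cover, and then invoking the standard maximal-atlas construction for existence and uniqueness — is exactly the standard chart-gluing argument the paper intends.
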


We leave the proof of this lemma to the reader and we return to
the proofs of theorems \ref{theor:azerolinear} and
\ref{theor:full_and_faithful}.

\begin{proof}[Proof of theorem \ref{theor:azerolinear}]
Let $\set{(U_i, h_i)}$ be an atlas over $M$ and $p|q$ the dimension
of $M$. Each chart $(U_i,h_i)$ of such an atlas induces a chart
$\big((U_i)_A, (h_i)_A\big)$, $(U_i)_A = \bigsqcup_{x \in U_i}
M_{A,x}$, over $M_A$ given by
\[
    \begin{aligned}
        (h_i)_A \colon (U_i)_A &\to {U^{p|q}_A} \\
        x_A &\mapsto x_A \circ h_i^* \text{.}
    \end{aligned}
\]
{with $U^{p|q}$ open subset of $\K^{p|q}$.}

The coordinate changes are easily checked to be given, with some
abuse of notation, by $(h_{i} \circ h_{j}^{-1})_A$,
which are $A_0$\nbd smooth due to lemma \ref{lemma:A0_smooth_nat_tr}.
The uniqueness of the $A_0$\nbd manifold structure is
clear. This proves the first point. The other
two points concern only the local behavior of the considered maps
and are clear in view of lemma \ref{lemma:A0_smooth_nat_tr} and eq.\
\eqref{eq:rhoX...Xrho}.
\end{proof}

\begin{proof}[Proof of theorem \ref{theor:full_and_faithful}]
Lemma \ref{lemma:A0_smooth_nat_tr} accounts for the case in
which $M$ and $N$ are superdomains. For the general case, let us
suppose we have
\[
    \alpha \in \Hom_{\dfunct{\SWA}{\cAoMan}} \big( M_{(\blank)} , N_{(\blank)} \big) \text{.}
\]
Fixing a suitable atlas of both supermanifolds, we obtain, in view
of lemma \ref{lemma:A0_smooth_nat_tr},  a family of local morphisms.
Such a family will give a morphism $M \to N$ if and only if they do
not depend on the choice of the coordinates. Let us suppose that $U$
and $V$ are charts on $M$ and $N$ respectively, {$U
\isom U^{p|q}\subseteq \K^{p|q}$, $V \isom V^{m|n}\subseteq \K^{m|n}$}, such that $\alpha_\K(\topo{U})
\subseteq \topo{V}$, and
{\begin{align*}
    h_i \colon U &\to U^{p|q} &
    k_i \colon V &\to V^{m|n} &
    i &= 1,2
\end{align*}}
are two different choices of coordinates on $U$ and $V$ respectively. The natural transformations
\[
    (\hat{\phi}_i)_{(\blank)} \coloneqq \left( k_i \right)_{(\blank)}
        \circ \restr{\left( \alpha_{(\blank)} \right)}{U_{(\blank)}}
        \circ \left( h_i^{-1} \right)_{(\blank)}
     {   \colon U^{p|q}_{(\blank)} \to V^{m|n}_{(\blank)}}
\]
give rise to two morphisms {$\hat{\phi}_i \colon U^{p|q} \to
V^{m|n}$}. If
\[
   \phi_i \coloneqq k_i^{-1} \circ \hat{\phi}_i \circ h_i \colon U \to V \text{,}
\]
we have $\phi_1 = \phi_2$ since $(\phi_i)_{(\blank)} = \restr{\left(
\alpha_{(\blank)} \right)}{U_{(\blank)}}$ and two morphisms that
give rise to the same natural transformation on a superdomain are
clearly equal.
\end{proof}

We end this subsection with the next proposition stating that the
Shvarts embedding preserves products and, in consequence, group
objects.

\begin{proposition} \label{prop:s-emb}
For all supermanifolds $M$ and $N$,
\[
    \sch(M \times N) \isom \sch(M) \times \sch(N) \text{.}
\]
Moreover $\sch(\K^{0|0})$ is a terminal object
in the category $\dfunct{\SWA}{\cAoMan}$.
\end{proposition}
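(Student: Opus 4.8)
The statement contains the implicit claim that $\dfunct{\SWA}{\cAoMan}$ admits finite products, so the plan is to construct these products objectwise and then identify $\sch(M\times N)$ with the objectwise product of $\sch(M)$ and $\sch(N)$. The soft ingredient is this: for a fixed even finite dimensional commutative algebra $A_0$, the product manifold of two $A_0$\nbd manifolds is again an $A_0$\nbd manifold (take the product of an $L$\nbd atlas with an $L'$\nbd atlas; the transition differentials are the direct sums of the given $A_0$\nbd module isomorphisms), the two projections are $A_0$\nbd smooth, and a map into such a product is $A_0$\nbd smooth if and only if both of its components are. Consequently the objectwise assignment $A\mapsto\cF(A)\times\cG(A)$ is a product in $\dfunct{\SWA}{\cAoMan}$, and in particular $\sch(M)\times\sch(N)$ is the functor $A\mapsto M_A\times N_A$ equipped with these $A_0$\nbd manifold structures.

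Next I would exhibit a natural isomorphism $\Psi_{(\blank)}\colon(M\times N)_{(\blank)}\isom M_{(\blank)}\times N_{(\blank)}$ in $\dfunct{\SWA}{\cAoMan}$. The supermanifold projections $\pr_M\colon M\times N\to M$ and $\pr_N\colon M\times N\to N$ induce, by theorem \ref{theor:azerolinear}, $A_0$\nbd smooth maps $(\pr_M)_A$ and $(\pr_N)_A$, hence a natural transformation $\Psi_{(\blank)}=\big((\pr_M)_{(\blank)},(\pr_N)_{(\blank)}\big)$. To see that each $\Psi_A$ is an isomorphism in $\cAoMan$ I would argue on charts: if $U$ and $V$ are coordinate charts of $M$ and $N$ around $x$ and $y$ (with $p|q$ and $m|n$ the respective dimensions), then $U\times V$ is a chart of $M\times N$ around $(x,y)$ whose coordinate system is the (disjoint) union of the coordinates of $U$ pulled back along $\pr_M$ and those of $V$ pulled back along $\pr_N$. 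By point \itref{\ref{it:v_ass_germs}} of proposition \ref{prop:valori_assegnati}, an $A$\nbd point of $U\times V$ is exactly an admissible assignment of values to these coordinates, i.e.\ the same datum as an $A$\nbd point of $U$ together with an $A$\nbd point of $V$ (the base points being independent); under the identifications of observation \ref{obs:coordinates} the map $\Psi_A$ becomes, in coordinates, the reshuffling isomorphism $A_0^{p+m}\times A_1^{q+n}\isom(A_0^p\times A_1^q)\times(A_0^m\times A_1^n)$, which is $A_0$\nbd linear. Hence $\Psi_A$ restricts to an isomorphism of $A_0$\nbd charts over $(U\times V)_A$, so it is an $A_0$\nbd diffeomorphism, and the local pieces glue since $\Psi_{(\blank)}$ is globally defined by the two projections. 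Naturality in $A$ is immediate, since $\funcpt{\rho}$ commutes with the projections, and $\Psi_{(\blank)}$ is an isomorphism already in $\dfunct{\SWA}{\cAoMan}$ because the objectwise inverse of an invertible $A_0$\nbd smooth map is again $A_0$\nbd smooth. Combining with the first paragraph yields $\sch(M\times N)\isom\sch(M)\times\sch(N)$.

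For the terminal object I would note that $\sheaf(\K^{0|0})=\stalk{\K^{0|0}}{\mathrm{pt}}=\K$, so $\K^{0|0}_A=\Hom_\SAlg(\K,A)$ is a single point for every super Weil algebra $A$, carrying the trivial $A_0$\nbd manifold structure $\K^{0|0}_A=A_0^0\times A_1^0=\{0\}$; thus $\sch(\K^{0|0})$ is, up to this canonical identification, the constant functor at a one point manifold. For any $\cF\in\dfunct{\SWA}{\cAoMan}$ there is exactly one family of maps $\cF(A)\to\K^{0|0}_A$; each is constant, hence smooth, its differential is the zero map and therefore $A_0$\nbd linear, and naturality holds trivially because the targets are singletons. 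So this is the unique morphism $\cF\to\sch(\K^{0|0})$ in $\dfunct{\SWA}{\cAoMan}$, i.e.\ $\sch(\K^{0|0})$ is terminal (equivalently, this is the instance of the product argument with no factors).

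The chart\nbd theoretic facts used above — that $U\times V$ is a chart of $M\times N$ with the stated coordinate system, and that the local $A_0$\nbd diffeomorphisms glue — are routine. The step that deserves genuine care is the bookkeeping of the $A_0$\nbd structures in the second paragraph: verifying that the coordinate expression of $\Psi_A$ is $A_0$\nbd linear, so that $\Psi_A$ is an isomorphism in $\cAoMan$ and $\Psi_{(\blank)}$ a morphism of $\dfunct{\SWA}{\cAoMan}$, rather than a mere bijection of underlying sets. Once theorem \ref{theor:azerolinear} and proposition \ref{prop:valori_assegnati} are available, even this is short.
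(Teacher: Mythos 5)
Your proof is correct and follows essentially the same route as the paper: both use the projections to define $(M\times N)_A \to M_A\times N_A$ and then rely on proposition \ref{prop:valori_assegnati} together with rectangular coordinates to see that this is an isomorphism, and both dispose of the terminal object by noting that $\K^{0|0}_A$ is a single point for every $A$. The only cosmetic differences are that the paper exhibits the inverse explicitly on stalks via $z_A(s_1\otimes s_2)=x_A(s_1)\,y_A(s_2)$ whereas you extract invertibility (and its $A_0$\nbd smoothness) from the coordinate description, and that you spell out, as the paper leaves implicit, that products in $\dfunct{\SWA}{\cAoMan}$ are computed objectwise.
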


\begin{proof}
The fact that $(M \times N)_A \isom M_A \times N_A$ for all $A$ can
be checked easily. Indeed, let $z_A\in (M\times N)_A$ with
$\red{z_A}=(x,y)$, we have that $\sheaf_x$ and $\sheaf_y$ naturally
inject in $\sheaf_{\red{z_A}}$.
 Hence $z_A$ defines,
by restriction, two $A_0$\nbd points $x_A\in M_A$ and $y_A\in N_A$.
Using prop.\ \ref{prop:valori_assegnati} and rectangular coordinates
over $M\times N$ it is easy to check that such a correspondence is
injective, and is also a natural transformation. Conversely, if
$x_A\in M_{A,x}$ and $y_A\in N_{A,y}$, they define a map $z_A \colon
\sheaf_{x} \otimes \sheaf_{y} \to A$ through $z_A(s_1 \otimes
s_2)=x_A(s_1)\cdot y_A(s_2)$. Using again prop.\
\ref{prop:valori_assegnati}, it is not difficult to check that this
requirement uniquely determines an element in $(M \times
N)_{A,(x,y)}$ and that this correspondence defines an inverse for the
morphism $(M\times N)_{(\blank)}\to M_{(\blank)}\times N_{(\blank)}$
defined above.

Along the same lines it can be proved that, a  similar condition
for the morphisms holds. Finally $\sch(\K^{0|0})$ is a terminal
object, since $\K^{0|0}_A = \K^0$ for all $A$.
\end{proof}

It is easy to check that the stated result is equivalent to the fact
that $\sch$ preserves finite products for arbitrary many objects.

\medskip

We
now consider a super Lie group $G$, i.~e.\ a group object in the
category of (smooth or holomorphic) supermanifolds. First we recall
briefly the notion of group object.

\begin{definition}
A \emph{group object} in some category with finite products and terminal
object $\mathfrak{T}$, is an object $G$ with three arrows
\begin{align*}
    \mu_G \colon G \times G &\to G &
    i_G \colon G &\to G &
    e_G \colon \mathfrak{T} &\to G
\end{align*}
satisfying the usual commutative diagrams for multiplication,
inverse and unit respectively.
\end{definition}

\begin{observation}
In a locally small category $\cat{C}$ we have that,
equivalently, a group  is an object
$G$ whose functor of points $\FOP{G}$ takes value in the category of
groups $\Grp$, i.~e.\ $G$ is a group object if there exists a
functor $\cat{C}\op \to \Grp$  that, composed with the forgetful
functor $\Grp \to \Sets$, equals $\FOP{G}$.
\end{observation}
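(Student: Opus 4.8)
The plan is to deduce this from Yoneda's lemma together with the observation that the functor of points preserves the structure used to formulate the notion of group object. The statement presupposes that $\cat{C}$ has finite products and a terminal object $\mathfrak{T}$, and local smallness is exactly what guarantees that $\FOP{G}=\fop{G}{\blank}$ takes values in $\Sets$. The preliminary remark to isolate is that $\yon\colon\cat{C}\to\cfunct{\cat{C}}{\Sets}$ is full and faithful and that for all $G$, $H$, $S$ there are canonical natural isomorphisms $\fop{G\times H}{S}\isom\fop{G}{S}\times\fop{H}{S}$ and $\fop{\mathfrak{T}}{S}\isom\{*\}$, since each $\Hom(S,\blank)$ carries products to products.

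First I would treat the passage from group objects to group-valued functors. Given structure morphisms $\mu_G\colon G\times G\to G$, $i_G\colon G\to G$, $e_G\colon\mathfrak{T}\to G$, applying $\yon$ and the identifications above produces, for each $S$, a multiplication $\fop{G}{S}\times\fop{G}{S}\to\fop{G}{S}$, an inversion $\fop{G}{S}\to\fop{G}{S}$, and a distinguished element $\{*\}\to\fop{G}{S}$. A functor sends commutative diagrams to commutative diagrams, so the associativity, unit and inverse diagrams for $(\mu_G,i_G,e_G)$ become the group axioms for these operations, making each $\fop{G}{S}$ a group. Naturality of the induced transformations then says exactly that $\fop{G}{\phi}$ is a group homomorphism for every arrow $\phi$, so $\FOP{G}$ factors through $\Grp$.

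For the converse I would start from a functor $\cG\colon\cat{C}\op\to\Grp$ whose composite with $\Grp\to\Sets$ equals $\FOP{G}$. The pointwise group operations on $\fop{G}{S}=\cG(S)$ are natural in $S$, since each $\cG(\phi)$ is a homomorphism; via the identifications above they assemble into natural transformations $\FOP{G}\times\FOP{G}\to\FOP{G}$, $\FOP{G}\to\FOP{G}$ and $\FOP{\mathfrak{T}}\to\FOP{G}$. Because $\yon$ is full and faithful, these are induced by unique morphisms $\mu_G\colon G\times G\to G$, $i_G\colon G\to G$, $e_G\colon\mathfrak{T}\to G$ in $\cat{C}$. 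The group axioms for this data are equalities of morphisms of $\cat{C}$, and since $\yon$ is faithful it suffices to check them after applying $\yon$, where they reduce to the group axioms already holding in every $\fop{G}{S}$. Hence $G$ is a group object.

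The one point I expect to require care is the bookkeeping that identifies the two packagings of the same data: a natural transformation of representable $\Sets$-valued functors that is a group homomorphism at every object is the same datum as a morphism between the corresponding lifts to $\Grp$-valued functors. This rests on faithfulness of $\Grp\to\Sets$ and on the fact that the group operations are themselves natural transformations; the remainder is just the formal statement that the full, faithful, product-preserving embedding $\yon$ transports the limit sketch defining ``group object'' back and forth between $\cat{C}$ and $\cfunct{\cat{C}}{\Sets}$.
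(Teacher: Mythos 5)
Your proposal is correct. The paper offers no proof of this observation --- it is stated as a standard categorical fact, with a pointer to the reference on product-preserving functors --- and your argument (Yoneda's lemma combined with the fact that each $\Hom(S,\blank)$ carries finite products to finite products, used in both directions to transport the structure morphisms and their axioms) is precisely the standard proof one would supply.
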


\begin{corollary}
If $G$ is a super Lie group, $\sch(G)$ with the arrows
$\sch(\mu_G)$, $\sch(i_G)$ and $\sch(e_G)$ is a group object in
$\dfunct{\SWA}{\cAoMan}$. This means that the Weil--Berezin functor
of $G$ takes values in the category of $A_0$\nbd smooth Lie groups.
\end{corollary}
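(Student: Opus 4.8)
The plan is to derive the statement formally from two facts already established: $\sch$ is full and faithful (theorem~\ref{theor:full_and_faithful}), and it preserves finite products and sends $\K^{0|0}$ to a terminal object (proposition~\ref{prop:s-emb}). The guiding principle is that a group object in a category $\cat{C}$ with finite products and terminal object $\mathfrak{T}$ is, by definition, a triple of arrows $\mu_G$, $i_G$, $e_G$ subject to a finite list of commuting diagrams assembled only from finite products, projections, diagonals and $\mathfrak{T}$. Consequently, every functor that preserves finite products and the terminal object carries group objects, together with their structure arrows, to group objects; it remains only to spell this out for $\sch$ and then to read off what it says at each super Weil algebra.

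First I would record that $\dfunct{\SWA}{\cAoMan}$ itself has finite products and a terminal object. A finite product of $A_0$\nbd manifolds is again an $A_0$\nbd manifold and a product of $A_0$\nbd smooth maps is $A_0$\nbd smooth, so products in $\funct{\SWA}{\cAoMan}$ may be computed objectwise and they restrict to the subcategory $\dfunct{\SWA}{\cAoMan}$; the terminal object is the constant functor $A \mapsto \K^0$. By proposition~\ref{prop:s-emb} and the remark following it, $\sch$ preserves all of this: there are natural isomorphisms $\sch(G \times G) \isom \sch(G) \times \sch(G)$ and $\sch(G \times G \times G) \isom \sch(G)^{\times 3}$ compatible with the projections and diagonals, and $\sch(\K^{0|0})$ is terminal. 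Applying $\sch$ to the associativity, unit and inverse diagrams of $(G,\mu_G,i_G,e_G)$ and transporting them along these isomorphisms then exhibits $\big(\sch(G),\sch(\mu_G),\sch(i_G),\sch(e_G)\big)$ as a group object in $\dfunct{\SWA}{\cAoMan}$.

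Finally I would unwind what this means at each super Weil algebra $A$. Evaluation at $A$ is a functor $\dfunct{\SWA}{\cAoMan}\to\cAoMan$ which again preserves finite products (they are objectwise) and the terminal object, so it carries the group object $\sch(G)$ to a group object $G_A$ in $\cAoMan$ whose structure maps are $(\mu_G)_A$, $(i_G)_A$, $(e_G)_A$; these are $A_0$\nbd smooth by theorem~\ref{theor:azerolinear}, so $G_A$ is an $A_0$\nbd smooth Lie group. Functoriality in $A$, again via theorem~\ref{theor:azerolinear} (which gives that $\funcpt{\rho}\colon G_A \to G_B$ is $\restr{\rho}{A_0}$\nbd smooth for every algebra morphism $\rho\colon A\to B$), then says precisely that the Weil--Berezin functor of $G$ takes values in $A_0$\nbd smooth Lie groups. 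I expect the one point needing genuine attention to be the strict compatibility of the product\nbd preservation isomorphisms of proposition~\ref{prop:s-emb} with the structural arrows $\mu_G$, $i_G$, $e_G$ --- i.e.\ that the transported diagrams really do commute, not merely up to a non-canonical isomorphism; this is guaranteed by the explicit description of those isomorphisms given in the proof of proposition~\ref{prop:s-emb}. The remainder is routine diagrammatic bookkeeping.
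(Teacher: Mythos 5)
Your argument is correct and is exactly the route the paper takes: the paper's proof consists of the single line that the corollary is an immediate consequence of proposition~\ref{prop:s-emb}, relying on the standard fact that a product- and terminal-object-preserving functor carries group objects to group objects. Your write-up merely spells out the diagrammatic bookkeeping and the objectwise evaluation that the paper leaves implicit.
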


\begin{proof}
This is an immediate consequence of prop.\ \ref{prop:s-emb}.
\end{proof}

For more information on group objects and product preserving functors, see \cite{Vistoli}.

\subsection{Representability of the Weil--Berezin functor} \label{subsec:representability}

Next definition is the natural generalization of the classical one
to the Weil--Berezin functor setting.

\begin{definition}
We say that a functor
\[
    \cF \colon \SWA \to \cAoMan
\]
is representable if there exists a supermanifold $M_\cF$ such that
$\cF \isom (M_\cF)_{(\blank)}$ in $\dfunct{\SWA}{\cAoMan}$.
\end{definition}

Notice that we are abusing the category terminology, that
considers a functor $\cF$ to be representable if and only if $\cF$
is isomorphic to the $\Hom$ functor.

\medskip

Due to theorem \ref{theor:full_and_faithful}, if a functor $\cF$ is
representable, then the supermanifold $M_\cF$ is unique up to
isomorphism. Next example shows that there exists non representable
functors.

\begin{example}
Consider the constant functor $\SWA \to \cAoMan$ defined as
$A \mapsto \K$ on the objects ($\K \isom A/\nil{A}$ is an $A$\nbd
module) and $\rho \mapsto \id_\K$ on the morphisms. This functor is
not representable in the sense explained above.
\end{example}

In this subsection we look for conditions ensuring the
representability for a functor
$\cF\colon \SWA \to \cAoMan$.

Since $\cF(\K)$ is a manifold, we can consider an open set $U
\subseteq \cF(\K)$. If $A$ is a super Weil algebra and
$\funcpt{\pr_A} \coloneqq \cF(\pr_A)$, where $\pr_A$ is the
projection $A \to \K$, $\funcpt{\pr_A}^{-1}(U)$ is an open $A_0$\nbd
submanifold of $\cF(A)$. Moreover, if $\rho \colon A \to B$ is a
superalgebra map, since $\pr_B \circ \rho = \pr_A$, $\funcpt{\rho}
\coloneqq \cF(\rho)$ can be restricted to
\[
    \subfunc{\funcpt{\rho}}{\funcpt{\pr_A}^{-1}(U)} \colon \funcpt{\pr_A}^{-1}(U) \to \funcpt{\pr_B}^{-1}(U) \text{.}
\]
We can hence define the functor
\[
    \begin{aligned}
        \subfunc{\cF}{U} \colon \SWA &\to \cAoMan \\
        A &\mapsto \funcpt{\pr_A}^{-1}(U) \\
        \rho &\mapsto \subfunc{\funcpt{\rho}}{\funcpt{\pr_A}^{-1}(U)} \text{.}
    \end{aligned}
\]

\begin{proposition}[Representability]
A functor
\[
    \cF \colon \SWA \to \cAoMan
\]
is representable if and only if there exists an open cover
$\set{U_i}$ of $\cF(\K)$ such that $\subfunc{\cF}{U_i} \isom
(\bV_i)_{(\blank)}$ with $\bV_i$ superdomains in a fixed $\K^{p|q}$.
\end{proposition}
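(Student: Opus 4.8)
The plan is to mimic the classical representability criterion (Theorem \ref{theor:representability}), transferring the problem through the Shvarts embedding. The ``only if'' direction is essentially by definition: if $\cF \isom M_{(\blank)}$ for a supermanifold $M$, pick a coordinate atlas $\set{(\bV_i,h_i)}$ of $M$ with $\bV_i$ superdomains in $\K^{p|q}$; then the $\bV_i$ form an open cover of $M$, hence $\set{\topo{\bV_i}}$ covers $\topo{M} = M_\K = \cF(\K)$, and one checks from Proposition \ref{prop:valori_assegnati}\itref{\ref{it:v_ass_chart}} and the definition of $\funcpt{\pr_A}$ that $\subfunc{\cF}{\topo{\bV_i}}(A) = \bigsqcup_{x \in \topo{\bV_i}} \Hom_\SAlg(\stalk{M}{x},A) = (\bV_i)_A$, naturally in $A$. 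So $\subfunc{\cF}{U_i} \isom (\bV_i)_{(\blank)}$ with $U_i = \topo{\bV_i}$.

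For the ``if'' direction, suppose such an open cover $\set{U_i}$ of $\cF(\K)$ is given together with isomorphisms $\theta_i \colon \subfunc{\cF}{U_i} \xrightarrow{\isom} (\bV_i)_{(\blank)}$ in $\dfunct{\SWA}{\cAoMan}$. First I would reconstruct the structure sheaf. For each $i$, the superdomain $\bV_i \subseteq \K^{p|q}$ has a structure sheaf; on the overlap, $\theta_j \circ \theta_i^{-1}$ restricts to a natural isomorphism $(\bV_i \cap_{\cF} \bV_j)_{(\blank)} \to (\bV_j \cap_{\cF} \bV_i)_{(\blank)}$ between functors of points of superdomains --- here the ``overlap'' $W_{ij}$ is defined as the open subfunctor of $\subfunc{\cF}{U_i}$ corresponding to the open subset $U_i \cap U_j$ of $\cF(\K)$, which under $\theta_i$ is an open subsupermanifold $\bV_{ij} \subseteq \bV_i$ since $U_i \cap U_j$ is open and $\funcpt{\pr_A}$ is $\restr{\pr_A}{A_0}$-smooth. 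By Lemma \ref{lemma:A0_smooth_nat_tr} (the local form of full faithfulness), this natural isomorphism comes from a unique superdiffeomorphism $g_{ij} \colon \bV_{ij} \to \bV_{ji}$. One then verifies the cocycle condition $g_{ik} = g_{jk} \circ g_{ij}$ on triple overlaps --- again by Lemma \ref{lemma:A0_smooth_nat_tr}, since both sides induce the same natural transformation --- and glues the $\bV_i$ along the $g_{ij}$ to obtain a superspace $M_\cF$. The point-set gluing is controlled by the Hausdorff/separation hypotheses implicit in the $\cAoMan$ structure on $\cF(\K)$ (it is a manifold, hence Hausdorff and second countable), so $M_\cF$ is a genuine supermanifold of dimension $p|q$.

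It remains to produce a natural isomorphism $\cF \isom (M_\cF)_{(\blank)}$ in $\dfunct{\SWA}{\cAoMan}$. For each super Weil algebra $A$ the sets $\subfunc{\cF}{U_i}(A)$ cover $\cF(A)$ --- because $\set{U_i}$ covers $\cF(\K)$ and every element of $\cF(A)$ has a well-defined base point in $\cF(\K)$ via $\funcpt{\pr_A}$ --- and they glue compatibly by construction of the $g_{ij}$; on the other side, $\set{(\bV_i)_A}$ covers $(M_\cF)_A$ by Proposition \ref{prop:valori_assegnati}. The local isomorphisms $\theta_i$ therefore patch to a single isomorphism of functors, and it is $A_0$-smooth for each $A$ because each $\theta_i$ is, so it is a morphism in $\dfunct{\SWA}{\cAoMan}$; uniqueness of the reconstructed $M_\cF$ up to isomorphism then follows from Theorem \ref{theor:full_and_faithful}. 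I expect the main obstacle to be the bookkeeping in the gluing step: one must check carefully that the open subfunctors $W_{ij} \subseteq \subfunc{\cF}{U_i}$ really do correspond to \emph{open subsupermanifolds} of $\bV_i$ (not merely to open subsets of the reduced manifold) and that the transition superdiffeomorphisms assemble into a valid cocycle --- this is where Lemma \ref{lemma:A0_smooth_nat_tr} does all the real work, converting statements about natural transformations into statements about honest supermanifold morphisms.
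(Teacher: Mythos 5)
Your proposal is correct and follows essentially the same route as the paper: both directions are handled identically, with the sufficiency argument resting on transporting the superdomain structures to the cover of $\cF(\K)$, using Lemma \ref{lemma:A0_smooth_nat_tr} to turn the natural isomorphisms $\theta_j\circ\theta_i^{-1}$ on overlaps into honest superdiffeomorphisms, verifying the cocycle condition by faithfulness, gluing, and finally patching the local isomorphisms into a global one. The only cosmetic difference is that the paper first pushes the sheaves $\sheaf_{\bV_i}$ forward onto the open sets $U_i\subseteq\cF(\K)$ before gluing, whereas you glue the $\bV_i$ directly along the transition maps $g_{ij}$; these are equivalent presentations of the same construction.
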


\begin{proof}
The necessity is clear due to the very definition of supermanifold.
Let us prove sufficiency. We have to build a supermanifold structure
on the topological space $\topo{\cF(\K)}$. Let us denote by
$(h_i)_{(\blank)} \colon \cF_{U_i} \to (\bV_i)_{(\blank)}$ the natural
isomorphisms in the hypothesis. On each $U_i$, we can put a
supermanifold structure $\bU_i$, defining the sheaf $\sheaf_{\bU_i}
\coloneqq [(h_i^{-1})_\K]_* \sheaf_{\bV_i}$. Let $k_i$ be the
isomorphism $\bU_i \to \bV_i$ and $(k_i)_{(\blank)}$ the
corresponding natural transformation. If $U_{i,j} \coloneqq U_i \cap
U_j$, consider the natural transformation $(h_{i,j})_{(\blank)}$
defined by the composition
\[
    (k_i^{-1})_{(\blank)} \circ (h_i)_{(\blank)} \circ (h_j^{-1})_{(\blank)} \circ (k_j)_{(\blank)}
    \colon (U_{i,j},\restr{\sheaf_{\smash{\bU_j}}}{U_{i,j}})_{(\blank)}
    \to (U_{i,j},\restr{\sheaf_{\smash{\bU_i}}}{U_{i,j}})_{(\blank)}
\]
where in order to avoid heavy notations we didn't explicitly
indicate the appropriate restrictions. Each $(h_{i,j})_{(\blank)}$
is a natural isomorphism in $\dfunct{\SWA}{\cAoMan}$ and, due to
lemma \ref{lemma:A0_smooth_nat_tr}, it gives rise to a supermanifold
isomorphism
\[
    h_{i,j} \colon (U_{i,j},\restr{\sheaf_{\smash{\bU_j}}}{U_{i,j}}) \to (U_{i,j},\restr{\sheaf_{\smash{\bU_i}}}{U_{i,j}}) \text{.}
\]
The $h_{i,j}$ satisfy the cocycle conditions $h_{i,i} = \id$ and
$h_{i,j} \circ h_{j,k} = h_{i,k}$ (restricted to $U_i \cap U_j \cap
U_k$). This follows from the analogous conditions satisfied by
$(h_{i,j})_A$ for each $A\in\SWA$. The supermanifolds $\bU_i$ can
hence be glued (for more information about the construction of a
supermanifold by gluing see for example \cite[ch.~2]{DM} or
\cite[\S~4.2]{Varadarajan}). Denote by $M_\cF$ the manifold thus
obtained.
 Moreover it is clear that $\cF$
is represented by the supermanifold $M_\cF$. Indeed, one can check
that the various $(h_i)_{(\blank)}$ glue together and give a natural
isomorphism $h_{(\blank)} \colon \cF \to (M_\cF)_{(\blank)}$.
\end{proof}

\begin{remark}
The supermanifold $M_\cF$ admits a more synthetic characterization.
In fact it is easily seen that $\topo{M_\cF} \coloneqq
\topo{\cF(\K)}$ and
\[
    \sheaf_{M_\cF}(U) \coloneqq \Hom_{\dfunct{\SWA}{\cAoMan}} \big( \subfunc{\cF}{U}, \K^{1|1}_{(\blank)} \big) \text{.}
\]
\end{remark}

\subsection{The functors of $\Lambda$\nbd points}
\label{lambdapoints}

In this subsection we want to give a brief exposition of the
original approach of A. S. Shvarts and A. A. Voronov (see
\cite{Shvarts,Voronov}). In their work they considered only
Grassmann algebras instead of all super Weil algebras. There are
some advantages in doing so: Grassmann algebras are many fewer,
moreover, as we noticed in remark \ref{remark:localalg}, they are
the sheaf of the super domains $\K^{0|q}$ and so the restriction to
Grassmann algebras of the local functors of points can be considered
as a true restriction of the functor of points. Finally the use of
Grassmann algebras is also used by A. S. Shvarts to formalize the
language commonly used in physics.

On the other hand the use of super Weil algebras has the advantage
that we can perform differential calculus on the
Weil--Berezin functor as we shall see in section \ref{sec:diff_calc}.
Indeed prop.\ \ref{prop:distributions} is valid only for the
Weil--Berezin functor
approach, since not every point supported distribution can be
obtained using only Grassmann algebras.
Also theorem \ref{theor:transitivity} and its consequences are valid
only in this approach, since purely even Weil algebras are
considered.

\medskip

If $M$ is a supermanifold and $\Gras$ denotes the category of
Grassmann algebras, we can consider the two functors
\begin{align*}
    &\begin{aligned}
        \Gras &\to \Sets \\
        \Lambda &\mapsto M_\Lambda
    \end{aligned} &
    \begin{aligned}
        \Gras &\to \cAoMan \\
        \Lambda &\mapsto M_\Lambda
    \end{aligned}
\end{align*}
in place of those
introduced by eq.\ \eqref{eq:A-point_functor} and eq.\
\eqref{eq:WB_functor} respectively.
As in the case of $A$\nbd points, with a slight abuse of notation
we denote by $M_\Lambda$ the $\Lambda$\nbd points
for each of the two different functors. What we have seen previously 
still remains
valid in this setting, provided we substitute systematically $\SWA$
with $\Gras$; in particular theorems \ref{theor:azerolinear} and
\ref{theor:full_and_faithful} still hold true. They are based on
prop.\ \ref{prop:formal_series} and lemma
\ref{lemma:A0_smooth_nat_tr} that we state here in their original
formulation as it is contained in \cite{Voronov}.

\begin{proposition}
The set of natural transformations between $\Lambda \mapsto
\K^{p|q}_{\Lambda}$ and $\Lambda \mapsto \K^{m|n}_{\Lambda}$ is in
bijective correspondence with
\[
    \big( \fseries{p}{q}(\K^p) \big)_0^m \times \big( \fseries{p}{q}(\K^p) \big)_1^n \text{.}
\]
A natural transformation comes from a supermanifold morphism $\K^{p|q} \to
\K^{m|n}$ if and only if it is $\Lambda_0$\nbd smooth for each
Grassmann algebra $\Lambda$.
\end{proposition}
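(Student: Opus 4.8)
The plan is to re-run, essentially word for word, the proofs of proposition~\ref{prop:formal_series} and lemma~\ref{lemma:A0_smooth_nat_tr}, checking at each step that the super Weil algebras actually used there may be replaced by Grassmann algebras. Observe first that the range condition \eqref{eq:restricions} appearing in proposition~\ref{prop:formal_series} is vacuous in the present situation, since the target superdomain is all of $\K^{m|n}$; thus the claimed parameter space is simply the set of tuples of formal series of the prescribed parities, and there is nothing to check about base points.

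First I would settle the elementary direction. Given $\F = (F_1,\ldots,F_{m+n})$ with $F_k = \sum_{\nu,J} f^k_{\nu,J} X^\nu \Theta^J$ of the stated parity, the same formula as in proposition~\ref{prop:formal_series}, namely $(\X,\T) \mapsto \sum_{\nu,J} f^k_{\nu,J}(\red{\X}) \nil{\X}^\nu \T^J$, defines a map $\K^{p|q}_\Lambda \to \K^{m|n}_\Lambda$ for every Grassmann algebra $\Lambda$ — the sum is finite because $\nil{\X_i} \in \Lambda_0$ and $\T_j \in \Lambda_1$ are nilpotent — and these maps assemble into a natural transformation, exactly as in the $\SWA$ case. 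That $\F \mapsto \alpha_{(\blank)}$ is injective is the point at which the proof of proposition~\ref{prop:formal_series} invoked the non-Grassmann test algebra $\kspoly{p}{q}/\maxid^s$; I would replace it as follows. Fix $k$, a multiindex $\nu$ with $\abs{\nu} = d$, a subset $J \subseteq \set{1,\ldots,q}$ and a point $x \in \K^p$. Take $\Lambda = \ext{\xi_1,\ldots,\xi_{2d},\zeta_1,\ldots,\zeta_q}$ and the $\Lambda$\nbd point at $x$ whose even increments are assembled from disjoint pairs of the $\xi$'s, so that $\nil{\X}^\nu$ contains the monomial $\xi_1 \cdots \xi_{2d}$ with coefficient $1$, and whose odd coordinates are $\T_j = \zeta_j$. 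Since distinct monomials in a Grassmann algebra with enough generators are linearly independent, the coefficient of $\xi_1 \cdots \xi_{2d}\,\zeta^J$ in ${(\alpha_\Lambda)}_k$ evaluated at this point equals $f^k_{\nu,J}(x)$ up to a fixed sign; hence $\F$ is recovered from $\alpha_{(\blank)}$.

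For surjectivity — every natural transformation arises from some $\F$ — I would define the candidate coefficients $f^k_{\nu,J}(x)$ by the extraction recipe just described and then verify that the resulting $\F$ reproduces $\alpha_{(\blank)}$ on every Grassmann point. The verification proceeds as the $\hat{A}$\nbd argument of proposition~\ref{prop:formal_series}: writing each $\nil{\X_i}$ and each $\T_j$ of an arbitrary $\Lambda$\nbd point $x_\Lambda = (\red{\X_i} + \nil{\X_i},\T_j)$ in terms of the odd generators of $\Lambda$, one factors $x_\Lambda = \funcpt{\rho}(y)$ through a suitable point $y$ over a larger Grassmann algebra $\hat{\Lambda}$, obtained from $\Lambda$ by adjoining enough fresh odd generators and sending them, under $\rho$, to the monomials occurring in the $\nil{\X_i}$ and $\T_j$; naturality of $\alpha_{(\blank)}$ together with the componentwise form \eqref{eq:rhoX...Xrho} of $\funcpt{\rho}$ then forces ${(\alpha_\Lambda)}_k(x_\Lambda) = \rho\big({(\alpha_{\hat{\Lambda}})}_k(y)\big)$, and a comparison across the directed system of Grassmann algebras (the analogue of lemma~\ref{lemma:upper_bound} being immediate, since any two Grassmann algebras are quotients of one on more odd generators) shows that the scalars involved are independent of $\Lambda$ and are precisely the $f^k_{\nu,J}(x)$. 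I expect this reduction — showing that Grassmann algebras alone already ``see'' every Taylor coefficient of an arbitrary formal series — to be the main obstacle, because at the decisive step the proof of proposition~\ref{prop:formal_series} relied on the \emph{non}-Grassmann algebras $\hat{A}$ of \eqref{eq:hatA} and $\kspoly{p}{q}/\maxid^s$, whose free even generators are not available in $\Gras$; the way around this is Voronov's observation that the even nilpotents $\xi_a \xi_b$ of a Grassmann algebra with sufficiently many odd generators simulate free even parameters accurately enough for coefficient extraction, an idea already implicit in the even rule principle recalled before theorem~\ref{theor:azerolinear}.

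Finally, the smoothness criterion is obtained by transcribing the proof of lemma~\ref{lemma:A0_smooth_nat_tr}. The computation \eqref{eq:omega}--\eqref{eq:Alin_diff_2} uses only that $\nil{\X_i}$ is an even nilpotent, hence central, so it is unchanged when $A$ is a Grassmann algebra; the separation of the coefficients $f^k_{\nu,J}$ forced by $\Lambda_0$\nbd smoothness again needs only Grassmann algebras with sufficiently many odd generators, the test algebras $\kspoly{p}{q}/\maxid^s$ of that proof being replaced as above. Combining this with the Chart theorem~\ref{prop:morphisms}, which identifies supermanifold morphisms $\K^{p|q} \to \K^{m|n}$ with tuples in $\sheaf(\K^{p|q})_0^m \times \sheaf(\K^{p|q})_1^n$ — that is, with formal series whose coefficients are smooth (resp.\ holomorphic) functions, the higher coefficients being the derivatives of the $f^k_{0,J}$ as in \eqref{eq:nat_tr_from_morph} — yields the stated equivalence: a natural transformation comes from a morphism precisely when it is $\Lambda_0$\nbd smooth for every Grassmann algebra $\Lambda$.
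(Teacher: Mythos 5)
Your overall strategy (re-run proposition~\ref{prop:formal_series} and lemma~\ref{lemma:A0_smooth_nat_tr} with Grassmann test algebras), your observation that the range condition \eqref{eq:restricions} is vacuous here, and your coefficient-extraction argument via points whose even increments are sums of disjoint pairs of odd generators are all sound, and they match the paper's intent. But there is a genuine gap at exactly the place you yourself flag as ``the main obstacle'': the surjectivity step. Naturality does give ${(\alpha_\Lambda)}_k(x_\Lambda) = \rho\big({(\alpha_{\hat{\Lambda}})}_k(y)\big)$, but this only transports the problem upstairs: ${(\alpha_{\hat{\Lambda}})}_k(y)$ is a priori an \emph{arbitrary} element of the big Grassmann algebra $\hat{\Lambda}$, and nothing in your argument shows that it is a polynomial, with scalar coefficients depending only on the base point, in the distinguished even nilpotents and the odd coordinates of $y$. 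In the $\SWA$ proof this structure is automatic, because the algebra $\hat{A}$ of \eqref{eq:hatA} has free even generators $z_i$ and \emph{every} element of $\hat{A}$ is a polynomial in $z_i,\zeta_j$; in $\Gras$ the substitute even nilpotents are sums of products of odd generators, and the image of $y$ could involve those odd generators in combinations that are not functions of the products. The paper closes this gap with a specific argument: it takes $y$ with even increments the ``formal scalar products'' $\sum_a \eta_{i,a}\xi_{i,a}$ in $\extn{2p(n-1)+q}$, notes that $y$ is fixed by the automorphisms of $\hat{\Lambda}$ induced by simultaneous formal rotations of the vectors $(\eta_{i,a})_a$ and $(\xi_{i,a})_a$, and uses naturality under these automorphisms together with invariant theory to conclude that ${(\alpha_{\hat{\Lambda}})}_k(y)$ can depend on the $\eta$'s and $\xi$'s only through the scalar products --- hence is polynomial in the nilpotent parts of the coordinates. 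Your appeal to ``Voronov's observation that the even nilpotents $\xi_a\xi_b$ simulate free even parameters'' is a restatement of the claim to be proved, not a proof of it.

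A secondary, but real, defect: the factorization you describe --- adjoining fresh \emph{odd} generators and ``sending them, under $\rho$, to the monomials occurring in the $\nil{\X_i}$'' --- is not parity-consistent, since those monomials are even and a superalgebra morphism must send odd generators to odd elements. The correct construction (the paper's) sends $\eta_{i,a}\mapsto\epsilon_a$ and $\xi_{i,a}\mapsto\sum_{b>a}\epsilon_b k_{i,a,b}$, both odd, so that it is the \emph{product} $\eta_{i,a}\xi_{i,a}$, not a single generator, that maps onto the even pieces $\sum_{b>a}\epsilon_a\epsilon_b k_{i,a,b}$ of $\nil{\X_i}$. The remaining parts of your proposal (the elementary direction, injectivity, and the transcription of the $A_0$\nbd smoothness computation \eqref{eq:omega}--\eqref{eq:Alin_diff_2} with Grassmann test algebras) go through once the structure result above is in place.
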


\begin{proof}
The proof is the same as in prop.\ \ref{prop:formal_series} and lemma
\ref{lemma:A0_smooth_nat_tr}. The only difference is in the first
proof. Indeed the algebra \eqref{eq:hatA} is not a Grassmann
algebra. So, if $A = \extn{n} = \ext{\epsilon_1,\ldots,\epsilon_n}$,
we have to consider
\[
    \hat{A} \coloneqq \extn{2p(n-1)+q} = \ext{\eta_{i,a},\xi_{i,a},\zeta_j}
\]
($1 \leq i \leq p$, $1 \leq j \leq q$, $1 \leq a \leq n-1$). A
$\extn{n}$\nbd point can be written as
\[
    x_{\extn{n}} = \left( u_1 + \sum_{a<b} \epsilon_a \epsilon_b k_{1,a,b},\ldots,u_p + \sum_{a<b} \epsilon_a \epsilon_b k_{p,a,b},
    \kappa_1,\ldots,\kappa_q \right)
\]
with $u_i \in \K$, $k_{i,a,b} \in (\extn{n})_0$ and $\kappa_j \in
(\extn{n})_1$. Its image under a natural transformation can be
obtained taking the image of the $\extn{2p(n-1)+q}$\nbd point
\[
    y_{\red{x_{\extn{n}}}} \coloneqq \left( u_1 + \sum_{a=1}^{n-1} \eta_{1,a}\xi_{1,a},\ldots,u_p + \sum_{a=1}^{n-1} \eta_{p,a}\xi_{p,a},\zeta_1,\ldots,\zeta_q \right)
\]
and applying the map $\extn{2p(n-1)+q} \to \extn{n}$
\[
    \left\{
    \begin{aligned}
    \eta_{i,a} &\mapsto \epsilon_a \\
    \xi_{i,a} &\mapsto \textstyle\sum_{b>a} \epsilon_b k_{i,a,b} \\
    \zeta_j &\mapsto \kappa_j
    \end{aligned}
    \right.
\]
to each component. The nilpotent part of each even component of
$y_{\red{x_{\extn{n}}}}$ can be viewed as a formal scalar product
between $(\eta_{i,1},\ldots,\eta_{i,n-1})$ and
$(\xi_{i,1},\ldots,\xi_{i,n-1})$. This is stable under formal
rotations, where $\eta_{i,a}$ and $\epsilon_b$ have to be thought
as coordinates in an $n-1$ dimensional space and the rotation
applies to them.
The same must be for its image. 
So $\eta_{i,a}$ and
$\xi_{i,a}$ can occur in the image only as a polynomial in $\sum_a
\eta_{i,a} \xi_{i,a}$. In other words the image of
$y_{\red{x_{\extn{n}}}}$ (and then of $x_{\extn{n}}$) is polynomial
in the nilpotent part of the coordinates.
\end{proof}

\subsection{Batchelor's approach} \label{Batchelor}

In this section we want to examine Batchelor's approach to supergeometry
(see \cite{Batchelor}, Sec. 1) and relate it with what was done
in the previous sections.

\medskip

In such an approach, a supermanifold is modelled on what
is called a $(r,s)$-dimensional super Euclidean space $E^{r,s}$,
which is given a non Hausdorff topology. Despite
the apparently very different setting, the choice of
an appropriate set of morphisms, makes the category of
such objects equivalent to the category of supermanifolds
as we have defined in Sec. \ref{subsec:SMan}. 
We shall also see the interpretation of
Batchelor's definition in term of our local functors of points.

\medskip

Throughout this section, by a supermanifold we mean a
{\sl differentiable supermanifold}, since Batchelor's approach
cannot be extended (as it is) to the holomorphic category.

\begin{definition}
Let $r$, $s<L$ be integers. We define 
\textit{$(r,s)$-dimensional super Euclidean space}  $E^{r,s}$ to be:
$$
E^{r,s}=\Lambda_\R(\theta_1 \dots \theta_L)^r_0 \oplus 
\Lambda_\R(\theta_1 \dots \theta_L)^s_1
$$
where $\Lambda_\R(\theta_1 \dots \theta_L)$ as usual denotes
the real exterior algebra in the variables $\theta_1 \dots \theta_L$,
while $\Lambda_\R(\theta_1 \dots \theta_L)^r_0 $ means we are taking
the direct product of $r$ copies of the even part of
$\Lambda_\R(\theta_1 \dots \theta_L)$ and similarly for
$\Lambda_\R(\theta_1 \dots \theta_L)^s_1$.
\end{definition}

We endow $E^{r,s}$ with the following topology: $U \subset E^{r,s}$
is \textit{open} if and only if $U=\epsilon^{-1}(V)$, for $V$ open
in $\R^r$ where
$$
\begin{array}{cccc}
\epsilon: & E^{r,s} & \lra & \R^r \\
&(u_1 \dots u_r,v_1 \dots v_s) & \mapsto & (u_1^0, \dots u_r^0)
\end{array}
$$
where $u_i^0$ is the component of $\Z$-degree 0 of the element
$u_i \in \Lambda_\R(\theta_1 \dots \theta_L)_0$.

\medskip

It is clear that the topology so defined is non Hausdorff. In fact
 we have that, as vector spaces, $E^{r,s} \cong \R^{N}$, for
$N>r$, hence if we define the topology via the projection
$\epsilon:E^{r,s}  \lra  \R^r $, 
we are unable to separate points which
lie in the same fiber above the same point in $\R^r$. 
In other words, two points $(u_1 \dots u_r,v_1 \dots v_s)$,
$(u_1' \dots u_r',v_1' \dots v_s') \in E^{r,s} $ with
$(u_1^0 \dots u_r^0)=({u_1^0}' \dots {u_r^0}')$ have the
same neighbourhoods.

\medskip

Let us clarify in the next example the relation of 
$E^{r,s}$ with the $\Lambda$-points and more in general
with the $T$-points of a supermanifold.

\begin{example}
Let us consider the superspace $\R^{r|s}$ as in 
example \ref{superspace-rs} and the $T$-points
of $\R^{r|s}$ for $T=\R^{0|L}$. Notice that topologically 
$|T|=\R^0$, that is, $|T|$ is a point. 
By the Chart theorem 
\ref{prop:morphisms} we have that $\R^{r|s}(T)$ is in bijective
correspondence with the $(r|s)$-uples of elements in
$\cO(T)_0^r \times \cO(T)_1^s$. In other words:
$$
\begin{array}{rl}
\R^{r|s}(\R^{0|L})&:=\Hom(\R^{0|L},\R^{r|s})=
\Hom(\cO(\R^{r|s}),\cO(\R^{0|L}))= \\ \\
&=\left\{\phi:C^{\infty}(\R^r) \otimes \Lambda_\R(\eta_1 \dots \eta_s)
\lra \Lambda_\R(\theta_1 \dots \theta_L) \right\} \\ \\
&=\left\{(u_1 \dots u_r,\nu_1 \dots \nu_s) \in 
\Lambda_\R(\theta_1 \dots \theta_L)_0^r \times 
\Lambda_\R(\theta_1 \dots \theta_L)_1^s
\right\} \cong E^{r,s}.
\end{array}
$$
Hence we have that set-theoretically $E^{r,s}$ are  the $\R^{0|L}$-points
of $\R^{r|s}$.

Consequently in the language of $\Lambda$-points of the previous
sections, $E^{r,s}$ are the 
$\Lambda_\R(\theta_1 \dots \theta_L)$-points of the 
supermanifold $\R^{r|s}$.

\end{example}

We shall now give the definition of supermanifolds according
to Batchelor.

\begin{definition}
Let $|M|$ be a topological space. We define a 
\textit{super-Euclidean chart} on $|M|$ as a pair $(U,\phi)$
where $U \subset |M|$ and $\phi:U \lra E^{r,s}$ is an homeomorphism
onto its image.
We say that $\{(U_\al, \phi_\al)\}$ is a smooth superatlas 
on $|M|$ if the $U_\al$ form an open cover of $|M|$ and
$$
\phi_\al \cdot \phi_\be^{-1}:\phi_\be(U_\al \cap U_\be)
\lra \phi_\al(U_\al \cap U_\be)
$$ 
is a {\sl superdiffeomorphism}.
  
A \textit{$B$-supermanifold} is a topological space together
with a maximal smooth superatlas.

\end{definition} 

In this definition we have not specified what a superdiffeomorphism
is. Certainly it is a diffeomorphism in the ordinary sense
(we are in $\R^N$ for some $N$), but it is necessary to
require further conditions in order to obtain the correct set of arrows,
that will give the equivalence of categories between supermanifolds
and $B$-supermanifolds.
Given the scope of the present paper, we are unable to provide a
characterization of the morphisms of $B$-supermanifolds
and so to properly define the
category of $B$-supermanifolds.
Furthermore it must be noticed the role of $L$ in the construction.
For $L'<L$ we obtain a subcategory of $B$-supermanifolds, which
embeds into the category constructed above. Consequently in order
to take into account all the possible values of $L$ it is necessary
to consider a direct limit. We shall not pursue this point furtherly
in our note.

\medskip

Despite our necessarily short treatment,
the next observation will
give an overview of how this construction stands with
respect to the others we have examined so far.

\begin{observation}
Let $M=(|M|,\cO_M)$ be a supermanifold defined as a superspace
together with a local model. The functor of points of $M$ is
by definition a functor:
\[
\FOP{M} \colon \SMan\op \to \Sets, 
\qquad \fop{M}{S} \coloneqq \Hom(S,M).
\]
One should notice that the arriving category is $\Sets$,
which has a very simple structure. We know that $\FOP{M}$
characterizes $M$, in the sense that supermanifolds and their
functors of points correspond bijectively to each other
(Yoneda's Lemma).
The functor of $\Lambda$-points on the other hand,
weakens considerably the category we start from, 
in the sense that we are considering only 
supermanifolds with underlying topological space made by just
one point: namely those supermanifolds whose superalgebra
of global sections is a Grassmann algebra.
If we want such a functor to characterize the supermanifold, 
we are forced to increase the hypothesis on the arrival category,
namely we need to ask that the set $M_\Lambda$ is in $\cAoMan$, the category
of $\cA_0$-manifolds, and consequently the morphisms appear to
be more complicated as we have seen in the previous sections.
In Batchelor's approach this strategy reaches its limit: 
we are taking into exam 
just one $\Lambda$-point for a special Grassmann algebra, which
has room enough to accomodate the odd dimensions: this the meaning
of the condition on the integer $L$ to be greater than the odd dimension $s$.
Consequently in order to achieve the equivalence
of categories, in other words, in order for this special $\Lambda$-point to
completely characterize the supermanifold, it is necessary to
add extra hypotheses on the arrival category, and this is the
meaning of the complicated non Hausdorff local model and of the
difficulty in describing the correct set of arrows, which we have not detailed.
The Weil-Berezin functor of points represents, in our opinion,
a good compromise between the functor of points $\FOP{M}$, the
functor of $\Lambda$-points and Batchelor's approach. 
The morphisms appear in a natural and reasonably simple
form, while we still have not to deal with the category of $\SMan$.

\end{observation}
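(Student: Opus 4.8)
The plan is to turn the informal comparison above into a precise statement and to prove it: a $B$\nbd supermanifold is the same datum as a differentiable supermanifold $M$ presented through the single Weil--Berezin value $M_\Lambda$, for the fixed Grassmann algebra $\Lambda \coloneqq \ext[\R]{\theta_1,\dots,\theta_L}$ with $r,s<L$, equipped with the coarse topology for which the base-point projection is continuous. Granting this identification, the equivalence of Batchelor's category with $\SMan$ follows formally from the $\Gras$\nbd version of theorem \ref{theor:full_and_faithful} (subsection \ref{lambdapoints}) together with the representability criterion.

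First I would fix $\Lambda$ as above and, using the preceding example and the chart theorem (prop.\ \ref{prop:morphisms}), identify the super Euclidean space $E^{r,s}$ with $\R^{r|s}_\Lambda$ as a set; then, invoking theorem \ref{theor:azerolinear} specialized to $\Gras$, I would upgrade this to an identification of $\Lambda_0$\nbd manifolds. Under this identification the structure map $\epsilon \colon E^{r,s} \to \R^r$ is $\funcpt{\pr_\Lambda} \colon \R^{r|s}_\Lambda \to \R^{r|s}_\K$ followed by the canonical bijection $\R^{r|s}_\K \isom \R^r$ --- equivalently, the base-point map \eqref{eq:base_point} --- so the ``open sets'' of $E^{r,s}$ are precisely the preimages $\funcpt{\pr_\Lambda}^{-1}(V)$ with $V$ open in $\R^r$. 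This makes transparent, on the functorial side, why Batchelor's topology is non Hausdorff: it is the topology pulled back along $\funcpt{\pr_\Lambda}$ from the reduced manifold, which is strictly coarser than the $\Lambda_0$\nbd manifold topology carried by $\R^{r|s}_\Lambda$.

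Next, given a $B$\nbd supermanifold $(\topo{M},\set{(U_\al,\phi_\al)})$, I would read its transition maps $\phi_\al \circ \phi_\be^{-1}$ --- which are ``superdiffeomorphisms'' between open subsets of $E^{r,s} = \R^{r|s}_\Lambda$ --- as the $\Lambda$\nbd components of natural transformations in $\dfunct{\Gras}{\cAoMan}$. By the proposition of subsection \ref{lambdapoints}, the $\Gras$\nbd version of prop.\ \ref{prop:formal_series} and lemma \ref{lemma:A0_smooth_nat_tr}, these correspond to $r|s$\nbd tuples of formal series $F_k = \sum_{\nu,J} f^k_{\nu,J} X^\nu \Theta^J$ with smooth coefficients obeying $\partial_i f^k_{\nu,J} = (\nu_i+1)\, f^k_{\nu+\delta_i,J}$, i.e.\ exactly the transition data of an $r|s$\nbd dimensional superdomain. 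Gluing the associated superdomains along these transitions, as in the representability proposition of subsection \ref{subsec:representability}, produces a supermanifold $M$ with $\topo{M}$ underneath; conversely $M_\Lambda$, endowed with its $\funcpt{\pr_\Lambda}$\nbd topology and the charts $(U_i)_\Lambda$, is a $B$\nbd supermanifold, and these two passages are mutually inverse because a superdomain morphism into a fixed $\K^{m|n}$ is already recovered from its action on $\Lambda$\nbd points once $L$ is large enough --- which is precisely what the hypothesis $r,s<L$ (and the direct limit over $L$) is for.

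The step I expect to be the main obstacle is the morphism side. The definition of $B$\nbd supermanifold as stated does not by itself pin down which diffeomorphisms of the ambient $\R^N$ count as ``superdiffeomorphisms'', so one must either import from \cite{Batchelor,Voronov} that Batchelor's morphisms are exactly the $\Lambda_0$\nbd smooth ones in the sense of definition \ref{def:Az_man} --- restricted to the single $\Lambda$ --- or re-derive this, and then verify that the direct limit over the auxiliary integers with $r,s<L$ is compatible with the surjections $\ext[\R]{\theta_1,\dots,\theta_{L'}} \to \ext[\R]{\theta_1,\dots,\theta_L}$ for $L<L'$ and with the induced $\Lambda_0$\nbd manifold structures, so that the resulting equivalence with $\SMan$ is independent of $L$. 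Once these compatibilities are in place the equivalence follows, as noted, from the full and faithfulness of the Shvarts embedding in its $\Gras$\nbd version and from the representability criterion.
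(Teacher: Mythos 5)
This Observation is a discursive remark: the paper offers no proof of it, and indeed explicitly states that, within its scope, it cannot characterize the morphisms of $B$\nbd supermanifolds and hence cannot properly define that category. You have instead set out to prove a precise theorem (an equivalence between Batchelor's category and $\SMan$) that the paper deliberately does not claim. That is legitimate in principle, and your identification of $E^{r,s}$ with $\R^{r|s}_\Lambda$ and of Batchelor's topology with the one pulled back along $\funcpt{\pr_\Lambda}$ from the reduced manifold matches the paper's example and is a genuinely useful sharpening of the remark.

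However, there is a real gap at the step you yourself flag as the main obstacle, and it is not merely a matter of importing compatibilities. Your assertion that ``a superdomain morphism into a fixed $\K^{m|n}$ is already recovered from its action on $\Lambda$\nbd points once $L$ is large enough'' conflates faithfulness with fullness. Faithfulness for a single $\Lambda=\ext[\R]{\theta_1,\dots,\theta_L}$ with $L\geq s$ does hold (evaluate at $\T_j=\theta_j$, $\nil{\X}=0$ to read off the coefficients $s_{k,J}$). But fullness fails for every fixed finite $L$: the characterization of which maps $U_\Lambda \to V_\Lambda$ arise from supermanifold morphisms (prop.\ \ref{prop:formal_series}, lemma \ref{lemma:A0_smooth_nat_tr}, and their $\Gras$\nbd versions) is a statement about \emph{natural} transformations, and its proof extracts the relations $\partial_i f^k_{\nu,J}=(\nu_i+1)f^k_{\nu+\delta_i,J}$ by varying the algebra over auxiliary Grassmann algebras with arbitrarily many generators. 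A single $\Lambda_0$\nbd smooth map for one fixed $\Lambda$ constrains only the finitely many Taylor coefficients visible below the nilpotency order of $\Lambda_0$, so $\Lambda_0$\nbd smoothness alone does not single out the maps of the form \eqref{eq:nat_tr_from_morph}. Consequently you cannot reduce the equivalence to theorem \ref{theor:full_and_faithful} in its $\Gras$ version; the class of ``superdiffeomorphisms'' must be imposed as extra data (Batchelor's $H^\infty$/Grassmann-analyticity condition), which is precisely the point the Observation is making and the paper declines to detail. Any complete proof would have to build that condition into the definition of a $B$\nbd supermanifold from the outset, or work with the direct system over all $L$ in a way that restores naturality, rather than with one $\Lambda$\nbd point.
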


In the next section we want to show why we believe the Weil-Berezin functor 
of points shows a definite advantage, with respect to the other
equivalent descriptions we have
examined so far, when dealing with differential geometry issues.

\section{Applications to differential calculus} \label{sec:diff_calc}

In this section we discuss some aspects of super differential
calculus on supermanifolds using the language of the Weil--Berezin
functor. In particular we establish a relation between the
$A$\nbd points of a supermanifold $M$ and the finite support
distributions over it, which
play a crucial role in Kostant's
seminal approach to supergeometry.

We also prove the super version of the Weil transitivity
theorem, which is a key tool for the study of the infinitesimal
aspects of supermanifolds, and we apply it in order to define
the ``tangent functor'' of $A\mapsto M_A$.

\subsection{Point supported distributions and $A$\nbd points} \label{subsec:distributions}

In this subsection we want to introduce and discuss
Kostant's approach (see \cite{Kostant}) using
the Weil--Berezin functor formalism.

\medskip

Let $(\topo{M},\sheaf_M)$ a supermanifold of dimension $p|q$ and $x
\in \topo{M}$. As in \cite[\S~2.11]{Kostant}, let us
consider the distributions with support at $x$.

\begin{definition}
If $\stalk{M}{x}'$ is the algebraic dual of the stalk at $x$, the
\emph{distributions with support at $x$ of order $k$} are defined as:
\[
    \distr[k]{M}{x} \coloneqq \Set{v \in \stalk{M}{x}' |
v \big( \maxid_x^{k+1} \big)=0}
\]
where $\maxid_x$ is, as usual, the maximal ideal of the germs of
sections that are zero if evaluated at $x$. Clearly $\distr[k]{M}{x}
\subseteq \distr[k+1]{M}{x}$. The
\emph{distributions with support at $x$} are given by the union
\[
    \distr{M}{x} \coloneqq \bigcup_{k = 0}^\infty \distr[k]{M}{x} \text{.}
\]
\end{definition}

\begin{observation}
The distributions $\distr[k]{M}{x}$
form a super vector space: an even distribution is
$0$ on an odd germ and vice versa. If
$x_1,\ldots,x_p,\theta_1,\ldots,\theta_q$ are coordinates in a
neighbourhood of $x$, a distribution of order $k$ is of the form
\[
    v = \sum_{\substack{\nu \in \N^p \\
    J \subseteq \set{1,\ldots,q} \\
    \abs{\nu} + \abs{J} \leq k}} a_{\nu,J} \;
    \ev_x \frac{\partial^{\abs{\nu}}}{\partial x^\nu}
    \frac{\partial^{\abs{J}}}{\partial \theta^J}
\]
with $a_{\nu,J} \in \K$. {This is immediate since we have the
following isomorphisms:
\[
    \distr[k]{M}{x}
    \isom \big( \sheaf_{\topo{M},x} \otimes \ext{\theta_1,\dots,\theta_q} \big)^*
    \isom \sheaf_{\topo{M},x}^{*} \otimes {\Lambda(\theta_1,\dots,\theta_q)}^*
\]
and $ \sheaf_{\topo{M},x}^{*} =\sum a_{\nu,J} \;
\ev_x \frac{\partial^{\abs{\nu}}}{\partial x^\nu}$ because of
the classical theory.}
\end{observation}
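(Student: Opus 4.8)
The plan is to reduce the statement to two facts: that $\maxid_x$ is a \emph{graded} ideal, and that $\stalk{M}{x}/\maxid_x^{k+1}$ is finite dimensional with an explicit monomial basis. For the grading, note that $\ev_x \colon \stalk{M}{x} \to \K$ is a morphism of superalgebras onto the purely even algebra $\K$, so its kernel $\maxid_x$ is a graded ideal, each power $\maxid_x^{k+1}$ is graded, and hence the annihilator $\distr[k]{M}{x} = \Set{v \in \stalk{M}{x}' | v(\maxid_x^{k+1}) = 0}$ is a graded subspace of the algebraic dual $\stalk{M}{x}'$. This is exactly the assertion that $\distr[k]{M}{x}$ is a super vector space and that an even distribution annihilates odd germs and conversely, by the very definition of the grading on the dual of a super vector space.

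For the explicit form I would first identify $\distr[k]{M}{x}$ with the dual of the finite dimensional space $\stalk{M}{x}/\maxid_x^{k+1}$. Fixing coordinates $x_1,\dots,x_p,\theta_1,\dots,\theta_q$ near $x$, Hadamard's lemma (lemma \ref{lemma:polynomials}) shows that every germ is congruent modulo $\maxid_x^{k+1}$ to a polynomial in $\germ{x_i},\germ{\theta_j}$, and, since $\maxid_x$ is generated by these coordinates (cf.\ the proof of lemma \ref{lemma:SWA}), that polynomial may be taken to be a $\K$\nbd combination of the monomials $x^\nu \theta^J$ with $\abs{\nu} + \abs{J} \leq k$. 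Thus these finitely many classes span the quotient; their linear independence is the classical fact that a nonzero polynomial of degree $\leq k$ cannot lie in $\maxid_x^{k+1}$, which one sees by applying the functionals $\ev_x\, \frac{\partial^{\abs{\nu}}}{\partial x^\nu}\frac{\partial^{\abs{J}}}{\partial \theta^J}$: these vanish on $\maxid_x^{k+1}$ but detect $x^\nu \theta^J$ up to the nonzero factor $\nu!$ and a sign. Dualising the monomial basis then shows that the functionals $\ev_x\, \frac{\partial^{\abs{\nu}}}{\partial x^\nu}\frac{\partial^{\abs{J}}}{\partial \theta^J}$ with $\abs{\nu} + \abs{J} \leq k$ form, up to normalisation, the dual basis, so every $v \in \distr[k]{M}{x}$ is the asserted unique $\K$\nbd combination of them.

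The tensor factorisation displayed in the statement then follows from the smooth local model $\stalk{M}{x} \isom \Cinf_{M,x} \otimes \ext{\theta_1,\dots,\theta_q}$ (example \ref{superspace-rs}; in the holomorphic case $\Cinf$ is replaced by $\Hol$ and the argument only simplifies, germs being convergent power series), together with the compatible description of $\maxid_x$ as the ideal generated by the classical maximal ideal of $\Cinf_{M,x}$ and by $\theta_1,\dots,\theta_q$. At each order $k$ this splits the dual, over the subsets $J$, into tensor products of the order\nbd$(k-\abs{J})$ point-supported distributions on the even factor with the dual coefficient functionals on $\ext{\theta_1,\dots,\theta_q}$; on the even factor one inserts the classical statement that point-supported distributions on $\R^p$ are spanned by the $\ev_x\, \frac{\partial^{\abs{\nu}}}{\partial x^\nu}$, and reassembling produces the formula for $v$.

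The only genuinely non-formal input is this last classical statement about finite-order distributions supported at a point of $\R^p$ --- equivalently, that no nonzero polynomial of degree $\leq k$ lies in $\maxid_x^{k+1}$; everything else is an application of Hadamard's lemma of the kind already carried out in lemma \ref{lemma:SWA} and routine bookkeeping with the $\Z_2$\nbd grading. Accordingly the only point I would expect to need care is keeping the even/odd filtration consistent when the purely even factor and the Grassmann factor are combined.
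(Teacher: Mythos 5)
Your proposal is correct, and it is both more detailed and more self-contained than what the paper actually does: the paper simply asserts the factorisation $\distr[k]{M}{x} \isom \big( \Cinf_{M,x} \otimes \ext{\theta_1,\dots,\theta_q} \big)^* \isom \sheaf[C]^{\infty,*}_{M,x} \otimes \Lambda(\theta_1,\dots,\theta_q)^*$ and invokes the classical description of point-supported distributions on the even factor, with no further argument. Your primary route is different in a useful way: you work directly with the finite-dimensional quotient $\stalk{M}{x}/\maxid_x^{k+1}$, use Hadamard's lemma to exhibit the monomials $x^\nu\theta^J$ with $\abs{\nu}+\abs{J}\leq k$ as a spanning set, and verify linear independence by pairing against the functionals $\ev_x\,\partial^{\abs{\nu}}/\partial x^\nu\,\partial^{\abs{J}}/\partial\theta^J$ (which kill $\maxid_x^{k+1}$ by the Leibniz rule), so that the asserted basis of $\distr[k]{M}{x}$ drops out as the dual basis. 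This avoids a small imprecision in the paper's displayed isomorphism, whose left-hand side is the finite-dimensional order-$k$ space while the right-hand side, read literally, is the full algebraic dual of the stalk; your quotient formulation keeps the order filtration explicit throughout. Your grading argument (the kernel of the superalgebra morphism $\ev_x$ is graded, hence so are its powers and their annihilators) is also a clean justification of the first sentence of the observation, which the paper leaves implicit. The trade-off is only one of economy: the paper's tensor-product shortcut imports the classical result wholesale, while your argument reproves the relevant finite-dimensional linear algebra but in return is uniform in the smooth and holomorphic cases and needs no external citation beyond Hadamard's lemma.
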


\begin{proposition} \label{prop:distributions}
Let $A$ be a super Weil algebra and $A^*$ its dual. Let
\[
    x_A \colon \stalk{M}{x} \to A
\]
be an $A$\nbd point near $x \in \topo{M}$. If $\omega \in A^*$, then
\[
    \omega \circ x_A \in \distr{M}{x} \text{.}
\]
Moreover each element of $\distr[k]{M}{x}$ can be obtained in this
way with
\[
    A = \stalk{M}{x} / \maxid_x^{k+1} \isom \kspoly{p}{q} / \maxid_0^{k+1}
\]
(see lemma \ref{lemma:SWA}).
\end{proposition}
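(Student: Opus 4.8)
The plan is to prove both halves of the statement directly, leaning on Hadamard's lemma and the structure theory from lemma \ref{lemma:SWA}.

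\medskip

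\textbf{First part: $\omega \circ x_A$ is a point supported distribution.} I would argue as follows. Since $x_A \in \Hom_\SAlg(\stalk{M}{x},A)$ is a morphism of local superalgebras, it sends $\maxid_x$ into the maximal ideal $\nil{A}$ of $A$, and hence $\maxid_x^j$ into $\nil{A}^j$ for every $j$. As $A$ is a super Weil algebra, $\nil{A}$ is nilpotent: there is an $r$ (the height of $A$) with $\nil{A}^{r+1}=0$. Therefore $x_A(\maxid_x^{r+1})=0$, so for any $\omega\in A^*$ the linear functional $\omega\circ x_A$ annihilates $\maxid_x^{r+1}$, which is precisely the condition for $\omega\circ x_A\in \distr[r]{M}{x}\subseteq\distr{M}{x}$. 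The parity statement is automatic since both $x_A$ and (the homogeneous components of) $\omega$ are parity-respecting.

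\medskip

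\textbf{Second part: every element of $\distr[k]{M}{x}$ arises this way.} Here I would take $A=\stalk{M}{x}/\maxid_x^{k+1}$, which by Hadamard's lemma (lemma \ref{lemma:polynomials}) and lemma \ref{lemma:SWA} is a finite-dimensional super Weil algebra isomorphic to $\kspoly{p}{q}/\maxid_0^{k+1}$; indeed Hadamard's lemma says every germ agrees with a polynomial in the coordinates modulo $\maxid_x^{k+1}$, so the quotient is spanned by the finitely many monomials $\germ{x}^\nu\germ{\theta}^J$ with $\abs{\nu}+\abs{J}\le k$. Let $x_A\colon\stalk{M}{x}\to A$ be the canonical projection; it is an $A$\nbd point with base point $x$. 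Now given $v\in\distr[k]{M}{x}$, by definition $v$ annihilates $\maxid_x^{k+1}$, so $v$ factors as $v=\bar v\circ x_A$ for a unique linear functional $\bar v\colon A\to\K$, i.e.\ $\bar v\in A^*$. Setting $\omega=\bar v$ gives $v=\omega\circ x_A$, as required. One should also remark that $x_A$ is indeed an $A$\nbd point near $x$ in the sense of the earlier definition, which is clear since $\pr_A\circ x_A=\ev_x$.

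\medskip

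\textbf{Main obstacle.} The genuinely nontrivial input is the identification $A=\stalk{M}{x}/\maxid_x^{k+1}\isom\kspoly{p}{q}/\maxid_0^{k+1}$ and the fact that this quotient is finite dimensional; but this is exactly what Hadamard's lemma and the proof of lemma \ref{lemma:SWA} already deliver, so the proof is mostly a matter of assembling these facts. The only mild subtlety is checking that the universal factorization $v=\bar v\circ x_A$ is literally the quotient map construction, which is immediate from linear algebra once one knows $\ker x_A=\maxid_x^{k+1}$.
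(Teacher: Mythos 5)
Your proof is correct and follows essentially the same route as the paper's (much terser) argument: nilpotency of $\nil{A}$ gives the forward direction, and factoring $v$ through the quotient $\stalk{M}{x}\to\stalk{M}{x}/\maxid_x^{k+1}$ gives the converse. The extra detail you supply about why this quotient is a super Weil algebra is exactly what the paper delegates to lemma \ref{lemma:SWA}, so nothing is missing.
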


\begin{proof}
If $A$ has height $k$, since $x_A(\maxid_x) \subseteq \nil{A}$, $\omega \circ x_A \in
\distr[k]{M}{x}$. If vice versa $v \in \distr[k]{M}{x}$, it factorizes through
\[
    \stalk{M}{x} \stackrel{\pr}{\to} \stalk{M}{x} / \maxid_x^{k+1} \stackrel{\omega}{\to} \K
\]
for a suitable $\omega$.
\end{proof}

In the next observation we relate the finite support distributions,
together with their interpretation via the Weil--Berezin functor,
with the tangent superspace.

\begin{observation} \label{obs:tangent_bundle}
Let us first recall that the tangent superspace to a supermanifold
$M$ at a point $x$ is the super vector space consisting of all the
derivations of the stalk at $x$:
\[
    T_x(M) \coloneqq \set{ v \colon \sheaf_{M,x} \to \K | \text{$v$ is a derivation} } \text{.}
\]

As in the classical setting we can recover the tangent space by
using the algebra of \emph{super dual numbers}. Let us consider $A =
\K(e,\epsilon)=\K[e,\epsilon]/ \langle e^2, e\epsilon,
\epsilon^2 \rangle$ be the super Weil algebra of super dual
numbers (see example \ref{example:SDN}). If $x_A \in M_{A,x}$ and
$s,t \in \stalk{M}{x}$, we have
\[
    x_A(st) = \ev_x(st) + x_e(st) e + x_\epsilon(st) \epsilon
\]
with $x_e,x_\epsilon \colon \stalk{M}{x} \to \K$. On the other hand
\begin{align*}
    x_A(st)
    &= x_A(s) x_A(t) \\
    &= \ev_x(s) \ev_x(t)
        + \big( x_e(s) \ev_x(t) + \ev_x(s) x_e(t) \big) e \\
        &\qquad + \big( x_\epsilon(s) \ev_x(t) + \ev_x(s) x_\epsilon(s) \big) \epsilon \text{.}
\end{align*}
Then $x_e$ (resp.\ $x_\epsilon$) is a derivation of the stalk that
is zero on odd (resp.\ even) elements and so $x_e \in T_x(M)_0$
(resp.\ $x_\epsilon \in T_x(M)_1$). The map
\[
    \begin{aligned}
        T(M) \coloneqq \bigsqcup_{x \in \topo{M}} T_x(M) &\to M_{\K(e,\epsilon)} \\
        v_0 + v_1 &\mapsto \ev_x + v_0 e + v_1 \epsilon
    \end{aligned}
\]
(with $v_i \in T_x(M)_i$) is an isomorphism of vector bundles over
$\red{M} \isom M_\K$, where $\red{M}$ is the
classical manifold associated with $M$, as in subsection
\ref{subsec:SMan} (see also \cite[ch.~8]{KMS} for an exhaustive
exposition in the classical case). The reader should not confuse
$T(M)$, which is the classical bundle obtained by the union of all
the tangent superspaces at the different points of $\topo{M}$, with
$\sheaf[T]_M$ which is the super vector bundle of all the derivations of
$\sheaf_M$.
\end{observation}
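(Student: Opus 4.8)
The plan is to prove the stated isomorphism $T(M)\isom M_{\K(e,\epsilon)}$ first fibrewise over $\red{M}\isom M_\K$, and then to check that the fibrewise identifications glue to a smooth map which is linear on fibres and covers the identity of $\red{M}$, hence a vector bundle isomorphism. For the fibrewise part, fix $x\in\topo{M}$ and let $x_A\in M_{A,x}=\Hom_\SAlg(\stalk{M}{x},A)$ with $A=\K(e,\epsilon)$. Since $\pr_A\circ x_A$ is an algebra map to $\K$ it equals $\ev_x$, so $x_A=\ev_x+L$ with $L\colon\stalk{M}{x}\to\nil{A}$ linear; as $x_A$ is parity preserving and $\nil{A}=\K e\oplus\K\epsilon$ with $e$ even and $\epsilon$ odd, $L$ splits as $L=e\cdot x_e+\epsilon\cdot x_\epsilon$ with $x_e$ vanishing on odd germs and $x_\epsilon$ on even germs. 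Because $\nil{A}^2=0$, expanding $x_A(st)=x_A(s)x_A(t)$ and reading off the $e$\nbd and $\epsilon$\nbd components — exactly the displayed computation, the odd side being the same modulo a trivial super\nbd sign since $\ev_x$ annihilates odd and nilpotent germs — shows that multiplicativity of $x_A$ is equivalent to the Leibniz rule for $x_e$ and for $x_\epsilon$; thus $x_e\in T_x(M)_0$ and $x_\epsilon\in T_x(M)_1$. Running the computation backwards, for any $v_0\in T_x(M)_0$ and $v_1\in T_x(M)_1$ the $\K$\nbd linear map $\ev_x+v_0 e+v_1\epsilon$ is multiplicative, parity preserving, and carries $\maxid_x$ into $\nil{A}$, hence lies in $M_{A,x}$. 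These two constructions are mutually inverse and $\K$\nbd linear, giving a canonical linear bijection $T_x(M)\isom M_{A,x}$.

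For the global statement, by theorem \ref{theor:azerolinear} the set $M_A$ is an $A_0$\nbd manifold, and the base\nbd point map $M_A\to M_\K\isom\red{M}$, $x_A\mapsto\red{x_A}$, exhibits it (as $\nil{A}^2=0$) as a smooth vector bundle with fibre $M_{A,x}$ over $x$. On the other side, $T(M)=\bigsqcup_{x}T_x(M)$ is a smooth vector bundle over $\red{M}$ whose even part $\bigsqcup_x T_x(M)_0$ is the classical tangent bundle $T\red{M}$ — a derivation of $\stalk{M}{x}$ killing all odd germs kills the whole nilpotent ideal $(\sheaf[J]_M)_x$ and hence factors through $\stalk{\red{M}}{x}$ — while the odd part is a complementary bundle. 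Now pick a chart $(U,\{x_i,\theta_j\})$: by proposition \ref{prop:valori_assegnati} and observation \ref{obs:coordinates} an $A$\nbd point over $U$ is recorded by $x_A(x_i)=\red{\X_i}+\alpha_i e$ and $x_A(\theta_j)=\beta_j\epsilon$ with $\alpha_i,\beta_j\in\K$, and the tangent vector associated to it by the fibrewise bijection above is $\sum_i\alpha_i\,\partial_{x_i}+\sum_j\beta_j\,\partial_{\theta_j}$ at $\red{x_A}$. In these coordinates the map is the identity on the fibre parameters $(\alpha_i,\beta_j)$ and covers the identity of $\red{M}$, so it is smooth, fibrewise linear and bijective, i.e.\ a vector bundle isomorphism over $\red{M}\isom M_\K$. (This is the super analogue of the classical fact that the Weil functor of the dual numbers is the tangent functor; see \cite[ch.~8]{KMS}.)

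The only point deserving care — the ``hard part'', such as it is — is the matching of the two vector bundle structures: the one on $M_A$ produced by theorem \ref{theor:azerolinear} and the one on $T(M)$ assembled from the classical tangent bundle together with the odd directions, via the identification $T_x(M)_0=T_x(\red{M})$. Inside a fixed coordinate chart this evaporates, because both structures are read off from the very same linear data $(\alpha_i,\beta_j)$ attached to an $A$\nbd point, so in the end no genuine obstacle remains.
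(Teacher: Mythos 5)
Your proof is correct and follows essentially the same route as the paper's own argument: decompose $x_A=\ev_x+x_e\,e+x_\epsilon\,\epsilon$, read the Leibniz rules for $x_e$ and $x_\epsilon$ off the multiplicativity of $x_A$ using $\nil{A}^2=0$, and identify the fibres. You additionally spell out the converse direction and the chart-level check that the fibrewise bijections assemble into a smooth bundle isomorphism, which the paper leaves implicit; these details are accurate and consistent with proposition \ref{prop:valori_assegnati} and theorem \ref{theor:azerolinear}.
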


\subsection{Transitivity theorem and applications} \label{subsec:trans_th}

We now want to give a brief account on how we can
perform differential calculus using the language of $A$\nbd
points. The essential ingredient is the super version of the
transitivity theorem that we discuss below.

\medskip

In the following, when classical smooth (resp.\
holomorphic) manifolds are considered, $\sheaf$ denotes the
corresponding sheaf of smooth (resp.\ holomorphic) functions.

\begin{theorem}[Weyl transitivity theorem] \label{theor:transitivity}
Let $M$ be a smooth (resp.\ holomorphic) supermanifold, $A$ a super
Weil algebra and $B_0$ a purely even Weil algebra, both real (resp.\
complex). Then
\[
    {(M_A)}_{B_0} \isom M_{A \otimes B_0}
\]
as $(A_0 \otimes B_0)$\nbd manifolds.
\end{theorem}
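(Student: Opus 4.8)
The plan is to reduce the theorem to the case of superdomains and then glue. Recall that $M_A$ is an $A_0$\nbd manifold by theorem \ref{theor:azerolinear}, so $(M_A)_{B_0}$ makes sense as the Weil--Berezin functor (with respect to purely even Weil algebras) of the classical $A_0$\nbd manifold $M_A$; since $B_0$ is purely even we should interpret $(M_A)_{B_0}$ via the smooth-category description of prop.\ \ref{prop:smoothcase}, i.~e.\ as $\Hom_{\cAoMan}$-type data, or more concretely via the local coordinate picture of observation \ref{obs:coordinates}. First I would fix an atlas $\{(U_i,h_i)\}$ of $M$ with $U_i \isom \K^{p|q}$. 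By observation \ref{obs:coordinates}, $(U_i)_A \isom \topo{U_i} \times \nil{A_0}^p \times A_1^q$ as an open subset of the $A_0$\nbd module $A_0^p \times A_1^q$, and $(U_i)_{A\otimes B_0}$ is the corresponding open subset of $(A\otimes B_0)_0^p \times (A\otimes B_0)_1^q = (A_0\otimes B_0)^p \times (A_1 \otimes B_0)^q$. On the other hand, since $(U_i)_A$ is (an open piece of) a free $A_0$\nbd module, its $B_0$\nbd points are naturally $\big((U_i)_A \otimes_{?} B_0\big)$, and the key elementary computation is the canonical identification
\[
    \big( A_0^p \times A_1^q \big) \otimes B_0 \isom (A\otimes B_0)_0^p \times (A \otimes B_0)_1^q
\]
which is just $(A \otimes B_0)_0 \isom A_0 \otimes B_0$ and $(A\otimes B_0)_1 \isom A_1 \otimes B_0$ together with associativity of tensor product. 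This gives the isomorphism $\big((U_i)_A\big)_{B_0} \isom (U_i)_{A\otimes B_0}$ at the level of sets, respecting base points in $\topo{U_i}$.

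Next I would check that this bijection is an isomorphism of $(A_0\otimes B_0)$\nbd manifolds and is natural. For the $(A_0\otimes B_0)$\nbd smooth structure: the left side carries the $B_0$\nbd manifold structure over the $A_0$\nbd manifold $(U_i)_A$, hence an $(A_0 \otimes B_0)$\nbd module structure on tangent spaces, and the identification above is visibly $(A_0\otimes B_0)$\nbd linear on each chart. For naturality and well-definedness of the global object one must verify that the chart transitions match: the coordinate change $h_i \circ h_j^{-1}$ on $M$ induces, by lemma \ref{lemma:A0_smooth_nat_tr}, the $A_0$\nbd smooth map $(h_i\circ h_j^{-1})_A$ on $M_A$, whose $B_0$\nbd point functor is $\big((h_i\circ h_j^{-1})_A\big)_{B_0}$; one checks this equals $(h_i\circ h_j^{-1})_{A\otimes B_0}$ under the identification, because both are given in coordinates by applying the same power series $F_k$ (from prop.\ \ref{prop:formal_series}) — on the left we first substitute $A$\nbd valued coordinates and then $B_0$\nbd expand, on the right we substitute $(A\otimes B_0)$\nbd valued coordinates directly, and these agree by the functoriality of the formal Taylor expansion \eqref{eq:formaltaylor} together with the ring identification $(A\otimes B_0)_0 \isom A_0\otimes B_0$. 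Hence the local isomorphisms glue to a global isomorphism $(M_A)_{B_0} \isom M_{A\otimes B_0}$.

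The step I expect to be the main obstacle is the compatibility check in the previous paragraph: making rigorous that ``$B_0$\nbd expand then evaluate'' coincides with ``evaluate over $A\otimes B_0$''. The subtlety is that $(M_A)_{B_0}$ is defined intrinsically from the $A_0$\nbd manifold $M_A$, with no reference to $A$ or the supergeometry of $M$, so one must genuinely prove that the smooth functions on $M_A$ used to define $(M_A)_{B_0}$ are precisely those arising from the $A_0$\nbd smooth structure coming from $\sheaf_M$, and that their $B_0$\nbd expansion reproduces the super-Taylor formula. A clean way to organize this is to prove the superdomain case first as an identity of functors $\SWA' \to \cat{(A_0\otimes B_0)}\cat{Man}$ (where $\SWA'$ denotes purely even Weil algebras), using prop.\ \ref{prop:formal_series} to reduce every natural transformation to formal series, and then invoke the gluing argument in the proof of theorem \ref{theor:full_and_faithful} verbatim. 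Once the superdomain case is settled, functoriality in $A$, in $B_0$, and in $M$ is routine, so I would state those naturalities and omit the diagram chases.
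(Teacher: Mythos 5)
Your strategy is sound and would work, but it is genuinely different from the paper's. You go local-to-global: identify $\big((U_i)_A\big)_{B_0}$ with $(U_i)_{A\otimes B_0}$ chart by chart via the tensor-product computation $(A\otimes B_0)_0\isom A_0\otimes B_0$, $(A\otimes B_0)_1\isom A_1\otimes B_0$, then verify that $\big((h_i\circ h_j^{-1})_A\big)_{B_0}=(h_i\circ h_j^{-1})_{A\otimes B_0}$ and glue. The paper instead builds the isomorphism globally and intrinsically from the start: for $s\in\sheaf_M(U)$ one has the $A$\nbd valued function $\hat{s}\colon y_A\mapsto y_A(s)$ on $U_A$, giving a superalgebra map $\tau_{x_A}\colon\stalk{M}{\red{x_A}}\to A\otimes\stalk{M_A}{x_A}$, and the isomorphism is simply $\xi(X)=(\id_A\otimes X)\circ\tau_{\red{X}}$ for $X\in\Hom_\SAlg(\stalk{M_A}{x_A},B_0)$; a single chart computation then shows that $\xi$ reads as the identity in the lifted coordinates, and globality is automatic because $\xi$ is fibered over the identity. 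The step you flag as the main obstacle --- proving that ``substitute $A$\nbd valued coordinates, then $B_0$\nbd expand'' agrees with ``substitute $(A\otimes B_0)$\nbd valued coordinates'' --- is exactly what the intrinsic definition of $\xi$ circumvents: since $\xi$ is defined without choosing coordinates, no transition-compatibility check is needed, and the composite-Taylor-expansion identity you would have to prove is absorbed into the one-line verification that $h_{A\otimes B_0}\circ\xi\circ(h_A)_{B_0}^{-1}=\id$. So your approach buys a very concrete, coordinate-level picture of the isomorphism at the cost of an honest (and somewhat fiddly) computation with iterated formal Taylor expansions, which you have identified but not carried out; the paper's approach buys a short proof at the cost of having to introduce the auxiliary map $s\mapsto\hat{s}$ and the identification $\sheaf_{M_A}^A\isom A\otimes\sheaf_{M_A}$. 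If you want to salvage your write-up with minimal extra work, define the candidate map intrinsically as the paper does and use your chart computation only to check it is a local $(A_0\otimes B_0)$\nbd diffeomorphism.
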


\begin{proof}
Let $\sheaf_{M_A}$ and $\sheaf_{M_A}^A$ be the sheaves of smooth
(resp.\ holomorphic) maps from the classical manifold $M_A$ to $\K$
and $A$ respectively. Clearly $\sheaf_{M_A}^A \isom A \otimes
\sheaf_{M_A}$ through the map $f \mapsto \sum_i a_i \otimes
\pair{a_i^*}{f}$, where $\set{a_i}$ is a homogeneous basis of $A$.

If $x_A \in M_A$, let
\[
    \begin{aligned}
        \tau_{x_A} \colon \stalk{M}{\red{x_A}} &\to \stalk{M_A}{x_A}^A \isom A \otimes \stalk{M_A}{x_A} \\
        \germ{s}_{\red{x_A}} &\mapsto \germ{\hat{s}}_{x_A}
    \end{aligned}
\]
where, if $s \in \sheaf_M(U)$ and $\red{x_A} \in U$,
\[
    \hat{s} \colon y_A \mapsto y_A(s)
\]
for all $y_A \in M_A$ such that $\red{y_A} \in U$ (it is not
difficult to show that this map descends to a map between stalks).

Recalling that
\begin{align*}
    {(M_A)}_{B_0} &\coloneqq \bigsqcup_{x_A \in M_A} \Hom_{\SAlg}(\stalk{M_A}{x_A},B_0) \\
    M_{A \otimes B_0} &\coloneqq \bigsqcup_{x \in \topo{M}} \Hom_{\SAlg}(\stalk{M}{x},A \otimes B_0) \text{,}
\end{align*}
we can define a map
\[
    \begin{aligned}
        \xi \colon {(M_A)}_{B_0} &\to M_{A \otimes B_0} \\
        X &\mapsto \xi(X)
    \end{aligned}
\]
setting
\[
    \xi(X) \colon \germ{s}_{\rred{X}} \mapsto (\id_A \otimes X)\tau_{\red{X}}(\germ{s}_{\rred{X}}) \text{.}
\]
This definition is well-posed since $\xi(X)$ is a superalgebra map,
as one can easily check.

Fix now a chart $(U,h)$, $h \colon U \to \K^{p|q}$, in $M$ and
denote by $(U_A,h_A)$, $\big({(U_A)}_{B_0}, {(h_A)}_{B_0}\big)$
and $(U_{A\otimes B_0}, h_{A\otimes B_0})$ the corresponding charts
lifted to $M_A$, ${(M_A)}_{B_0}$ and $M_{A \otimes B_0}$
respectively. If $\set{e_1,\ldots,e_{p+q}}$ is a homogeneous basis
of $\K^{p|q}$, we have (here, according to observation
\ref{obs:coordinates}, we tacitly use the identification
$\K^{p|q}_A \isom (A \otimes \K^{p|q})_0$)
\[
    \begin{aligned}
        {(h_A)}_{B_0} \colon {(U_A)}_{B_0} &\to (A \otimes B_0 \otimes \K^{p|q})_0 \\
        X &\mapsto \sum_{i,j} a_i \otimes X \big( h_A^*(a_i^* \otimes e_j^*) \big) \otimes e_j
    \end{aligned}
\]
and
\[
    \begin{aligned}
        h_{A\otimes B_0} \colon U_{A \otimes B_0} &\to (A \otimes B_0 \otimes \K^{p|q})_0 \\
        Y &\mapsto \sum_k Y \big( h^*(e_k^*) \big) \otimes e_k \text{.}
    \end{aligned}
\]
Then, since
\begin{equation*}
    \xi(X) \big( h^*(e_k^*) \big)
    = (\id \otimes X) \big( \widehat{\smash{h^*}(e_k^*)} \big)
    = (\id \otimes X) \big( \textstyle\sum_i a_i \otimes h_A^*(a_i^* \otimes e_k^*) \big) \text{,}
\end{equation*}
we have
\[
    h_{A\otimes B_0} \circ \xi \circ {(h_A)}_{B_0}^{-1} = \id_{{(h_A)}_{B_0}({(U_A)}_{B_0})} \text{.}
\]
This entails in particular that $\xi$ is a local $(A_0 \otimes
B_0)$\nbd diffeomorphism. The fact that it is a global
diffeomorphism follows noticing that it is fibered over the
identity, being
\[
    \xi(X) \in M_{A \otimes B_0,\smash{\rred{X}}} \text{.}
    \qedhere
\]
\end{proof}

From now on we assume all supermanifolds to be smooth.

\medskip

We want to briefly explain some applications of the
Weil transitivity theorem to the smooth category. Let $M$
be a smooth supermanifold and let $A$ be a real super Weil algebra.
As we have seen in the previous section, $M_A$ has a natural
structure of classical smooth manifold and, due to prop.\
\ref{prop:smoothcase}, we can identify $M_A$ with the space of
superalgebra maps $\sheaf(M) \to A$.

\begin{definition}
If $x_A \in M_A$, we define the space of \emph{$x_A$\nbd linear
derivations} of $M$ (\emph{$x_A$\nbd derivations} for short) as the
$A$\nbd module\footnote{%
    We recall that if $V = V_0 \oplus V_1$ and
    $W = W_0 \oplus W_1$ are super vector spaces then $\HOM(V,W)$
    denotes the super vector space of all linear
    morphisms between $V$ and $W$ with the gradation $\HOM(V,W)_0
    \coloneqq \Hom(V_0,W_0) \oplus \Hom(V_1,W_1)$, $\HOM(V,W)_1
    \coloneqq \Hom(V_0,W_1) \oplus \Hom(V_1,W_0)$.
}
\begin{align*}
    \Der_{x_A} \big( \sheaf(M),A \big)
    \coloneqq \Big\{\, & X \in \HOM \big( \sheaf(M),A \big) \,\Big|\, \forall s,t \in \sheaf(M) , \\
    & X(st)= X(s) x_A(t) + (-1)^{\p{X}\p{s}}x_A(s)X(t) \,\Big\} \text{.}
\end{align*}
\end{definition}

\begin{proposition}
The tangent superspace at $x_A$ in $M_A$ canonically identifies with
$\Der_{x_A} \big( \sheaf(M), A \big)_0$.
\end{proposition}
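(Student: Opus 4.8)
The plan is to reduce the statement to the Weil transitivity theorem \ref{theor:transitivity}. Recall that by theorem \ref{theor:azerolinear} and proposition \ref{prop:smoothcase} the set $M_A$ carries a canonical structure of classical smooth manifold, with $M_A \isom \Hom_{\SAlg_\R}\big(\sheaf(M),A\big)$; I write $T_{x_A}(M_A)$ for its tangent space at $x_A$. I would apply theorem \ref{theor:transitivity} with $B_0 \coloneqq \R(e) = \R[e]/\langle e^2\rangle$, the algebra of dual numbers (example \ref{example:DN}), which is a purely even Weil algebra. This yields an isomorphism of classical manifolds $(M_A)_{\R(e)} \isom M_{A \otimes \R(e)}$, fibred over $M_A \isom (M_A)_\K$, and the proof then amounts to identifying the fibre over $x_A$ on each side.

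On the left, for any classical manifold $N$ there is a canonical identification $N_{\R(e)} \isom TN$: an algebra map $\stalk{N}{n} \to \R(e)$ has the form $f \mapsto \ev_n(f) + v(f)\,e$ with $v \in T_n N$ (this uses only that $\stalk{N}{n}$ is local), exactly as in observation \ref{obs:tangent_bundle}, cf.\ \cite[ch.~8]{KMS}. Hence the fibre over $x_A$ on the left is $T_{x_A}(M_A)$.

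On the right, the fibre of $M_{A\otimes\R(e)}$ over $x_A$ — with respect to the projection induced by $\id_A \otimes \pr_{\R(e)} \colon A\otimes\R(e)\to A$ — consists of the superalgebra morphisms $\psi \colon \stalk{M}{\red{x_A}} \to A\otimes\R(e) = A\oplus Ae$ with $(\id_A \otimes \pr_{\R(e)})\circ\psi = x_A$. Writing $\psi(s) = x_A(s) + X(s)\,e$, I would observe that $\psi$ being parity preserving forces $X\colon \stalk{M}{\red{x_A}}\to A$ to be even, while, since $e$ is central, even and squares to zero, the identity $\psi(st)=\psi(s)\psi(t)$ is equivalent to
\[
    X(st) = X(s)\,x_A(t) + x_A(s)\,X(t) \text{,}
\]
i.e.\ (as $X$ is even) to $X \in \Der_{x_A}\big(\sheaf(M),A\big)_0$, where one passes freely between germs at $\red{x_A}$ and global sections by proposition \ref{prop:smoothcase}. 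Conversely every such $X$ produces a valid $\psi$, so the right-hand fibre is canonically $\Der_{x_A}\big(\sheaf(M),A\big)_0$; composing through the transitivity isomorphism gives the desired canonical identification.

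The point I expect to require care is checking that the transitivity isomorphism $\xi$ of theorem \ref{theor:transitivity} is compatible with the two projections invoked above (the one induced by $\pr_{\R(e)}\colon\R(e)\to\R$ on the left and the one induced by $\id_A\otimes\pr_{\R(e)}$ on the right), so that it indeed restricts to an isomorphism between the two fibres over $x_A$. This should follow from the naturality of $\xi$ in the second algebra; alternatively it can be verified directly from the explicit formula $\xi(X)\colon \germ{s}\mapsto(\id_A\otimes X)\tau_{\red{X}}(\germ{s})$ used in the proof of theorem \ref{theor:transitivity}. Everything else is routine bookkeeping.
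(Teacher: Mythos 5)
Your proposal is correct and follows essentially the same route as the paper: identify $T(M_A)$ with $(M_A)_{\R(e)}$ via dual numbers, apply the Weil transitivity theorem to pass to $M_{A\otimes\R(e)}$, and decompose a point of the latter over $x_A$ as $x_A + X\,e$ with $X$ an even $x_A$\nbd derivation. Your extra care about the compatibility of $\xi$ with the two projections is already covered by the paper's remark that $\xi$ is fibered over the identity.
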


\begin{proof}
If $\R(e)$ is the algebra of dual number (see example
\ref{example:DN}), $(M_A)_{\R(e)}$ is isomorphic, as a vector
bundle, to the tangent bundle $T(M_A)$, as we have seen in observation
\ref{obs:tangent_bundle}. Due to theorem
\ref{theor:transitivity}, we thus have an isomorphism
\[
    \xi \colon T(M_A) \isom {(M_A)}_{\R(e)} \to M_{A\otimes\R(e)} \text{.}
\]
On the other hand, it is easy to see that $x_{A\otimes \R(e)} \in
M_{A\otimes\R(e)}$ can be written as $x_{A\otimes \R(e)} = x_A
\otimes 1 + v_{x_A} \otimes e$, where $x_A \in M_A$ and $v_{x_A}
\colon \sheaf(M) \to A$ is a parity preserving map satisfying the
following rule for all $s,t \in \sheaf(M)$:
\[
    v_{x_A}(st) = v_{x_A}(s) \, x_A(t) + x_A(s) \, v_{x_A}(t) \text{.}
\]
Then each tangent vector on $M_A$ at $x_A$ canonically identifies a
even $x_A$\nbd derivation and, vice versa, each such derivation
canonically identifies a tangent vector at $x_A$.
\end{proof}

We conclude studying more closely the structure of $\Der_{x_A} \big(
\sheaf(M), A \big)$. The following proposition describes it
explicitly.

Let $K$ be a right $A$\nbd module and let $L$ be a left $B$\nbd
module for some algebras $A$ and $B$. Suppose moreover that an
algebra morphism $\rho \colon B \to A$ is given. One defines the
$\rho$\nbd tensor product $K \otimes_\rho L$ as the quotient of the
vector space $K \otimes L$ with respect to the equivalence relation
\[
    k \otimes b \cdot l \sim k \cdot \rho(b) \otimes l
\]
for all $k \in K$, $l \in L$ and $b \in B$.

Moreover, if $M$ is a supermanifold, we denote by $\sheaf[T]_M$ the
\emph{super tangent bundle} of $M$, i.~e.\ the sheaf defined by
$\sheaf[T]_M \coloneqq \Der(\sheaf_M)$.

\begin{proposition}
Let $M$ be a smooth supermanifold and let $x \in \topo{M}$. Denote
$\stalk[T]{M}{x}$ the germs of vector fields at $x$. One has the
identification of left $A$\nbd modules
\[
    \Der_{x_A} \big( \sheaf(M), A \big)
    \isom A \otimes T_{\red{x_A}}(M)
    \isom A \otimes_{x_A} \stalk[T]{M}{\red{x_A}} \text{.}
\]
\end{proposition}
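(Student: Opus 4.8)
The plan is to realise the right-hand identification $\Der_{x_A}(\sheaf(M),A)\isom A\otimes_{x_A}\stalk[T]{M}{\red{x_A}}$ as a \emph{canonical} isomorphism of left $A$\nbd modules, and then to read off the middle term $A\otimes T_{\red{x_A}}(M)$ simply by passing to a local coordinate system. Write $x\coloneqq\red{x_A}$ throughout.

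First I would reduce to the stalk and record a finiteness lemma. By the bump-function argument of the (unlabelled) lemma preceding prop.\ \ref{prop:smoothcase} — choose $t$ equal to $1$ near $x$ with $st=0$, so that $0=X(st)=X(s)x_A(t)\pm x_A(s)X(t)$ with $x_A(s)=0$ and $x_A(t)$ invertible — every $x_A$\nbd derivation $X\colon\sheaf(M)\to A$ kills sections vanishing near $x$, so $\Der_{x_A}(\sheaf(M),A)$ coincides with the space of $x_A$\nbd derivations $\stalk{M}{x}\to A$. Since $\pr_A\circ x_A=\ev_x$ gives $x_A(\maxid_x)\subseteq\nil{A}$, hence $x_A(\maxid_x^j)\subseteq\nil{A}^j$, the iterated Leibniz rule shows such an $X$ annihilates $\maxid_x^N$ once $\nil{A}^{N-1}=0$ (each term of $X(s_1\cdots s_N)$ carries a factor lying in $\nil{A}^{N-1}$). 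Combined with Hadamard's lemma \ref{lemma:polynomials} this is the derivation analogue of prop.\ \ref{prop:valori_assegnati}: after fixing coordinates $\set{x_i,\theta_j}$ at $x$, an $x_A$\nbd derivation is uniquely determined by the values $X(\germ{x_i}),X(\germ{\theta_j})\in A$, which may be prescribed arbitrarily subject only to the parity constraints. Existence is immediate, since $s\mapsto\sum_i\xi_i\,x_A(\partial_{x_i}s)+\sum_j\eta_j\,x_A(\partial_{\theta_j}s)$ is visibly an $x_A$\nbd derivation. Thus $\Der_{x_A}(\sheaf(M),A)$ is a free left $A$\nbd module of rank $p|q=\dim M$.

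Next I would set up the comparison map: for a germ of vector field $D\in\stalk[T]{M}{x}$ and $a\in A$, let $\Phi(a\otimes D)(s)\coloneqq a\cdot x_A(Ds)$. Because $x_A$ is an algebra morphism, $x_A\circ D$ obeys the $x_A$\nbd Leibniz rule; left multiplication by $a$ preserves it, provided one checks — using supercommutativity of $A$ — that $(a\cdot Y)(s)\coloneqq aY(s)$ really is a left $A$\nbd module structure on $\Der_{x_A}(\sheaf(M),A)$, i.e.\ that the Koszul signs cancel. The resulting assignment is left $A$\nbd linear in $a$ and factors through $\otimes_{x_A}$ because $x_A\big((fD)(s)\big)=x_A(f)\,x_A(Ds)$, so it defines a left $A$\nbd module map $\Phi\colon A\otimes_{x_A}\stalk[T]{M}{\red{x_A}}\to\Der_{x_A}\big(\sheaf(M),A\big)$. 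To see $\Phi$ is an isomorphism, fix coordinates at $x$: then $\set{\partial_{x_i},\partial_{\theta_j}}$ is an $\stalk{M}{x}$\nbd basis of $\stalk[T]{M}{x}$, so $\set{1\otimes\partial_{x_i},\,1\otimes\partial_{\theta_j}}$ is an $A$\nbd basis of the source, and $\Phi$ carries it to the $x_A$\nbd derivations $s\mapsto x_A(\partial_{x_i}s)$ and $s\mapsto x_A(\partial_{\theta_j}s)$, whose values on the coordinate germs are $\delta_{ik}$ resp.\ $0$; by the finiteness lemma these form an $A$\nbd basis of the target. Finally, the same coordinates identify $T_x(M)$ with the super vector space spanned by $\ev_x\partial_{x_i}$ and $\ev_x\partial_{\theta_j}$, of dimension $p|q$, so $A\otimes_\K T_x(M)$ is free of rank $p|q$ as well, and matching the bases gives $A\otimes T_{\red{x_A}}(M)\isom A\otimes_{x_A}\stalk[T]{M}{\red{x_A}}$, closing the chain.

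I expect two spots to contain all the real work. The routine-but-delicate one is the sign bookkeeping in the previous paragraph: verifying that the left $A$\nbd action is well defined on $x_A$\nbd derivations, that $x_A\circ D$ stays a derivation when $D$ is odd, and that $\Phi$ respects the relation $a\otimes fD\sim a\,x_A(f)\otimes D$ defining the $x_A$\nbd tensor product. The conceptual step is the finiteness lemma — that an $x_A$\nbd derivation annihilates a high power of $\maxid_x$ and is therefore pinned down by finitely many scalars — but this is just the derivation version of prop.\ \ref{prop:valori_assegnati} and uses only nilpotency of $\nil{A}$ together with Hadamard's lemma. One should also note that the middle identification $A\otimes T_{\red{x_A}}(M)\isom A\otimes_{x_A}\stalk[T]{M}{\red{x_A}}$ becomes canonical only after the coordinate choice, whereas the isomorphism $\Phi$ with $A\otimes_{x_A}\stalk[T]{M}{\red{x_A}}$ is natural.
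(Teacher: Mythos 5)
Your proof is correct and follows essentially the same route as the paper: the heart of both arguments is that an $x_A$\nbd derivation kills a high power of $\maxid_{\red{x_A}}$ (nilpotency of $\nil{A}$ plus the Leibniz rule), hence by Hadamard's lemma is freely determined by its values on coordinate germs, giving the explicit form $X(s)=\sum_i f_i\,x_A(\partial_{x_i}s)+\sum_j F_j\,x_A(\partial_{\theta_j}s)$ from which both identifications are read off. Your bump-function reduction from $\sheaf(M)$ to the stalk and the explicit map $\Phi(a\otimes D)(s)=a\,x_A(Ds)$ are welcome refinements of steps the paper dispatches with ``this result is clearly local'' and ``the second descends from eq.~\eqref{eq:x_A-derivation}'', but they do not change the underlying argument.
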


This result is clearly local so that it is enough to prove it in the
case $M$ is a superdomain. Next lemma do this for the first
identification. The second descends from eq.\
\eqref{eq:x_A-derivation}, since $\stalk[T]{M}{\red{x_A}} =
\stalk{M}{\red{x_A}} \otimes T_{\red{x_A}}(M)$.

\begin{lemma}
Let $U$ be a superdomain in $\R^{p|q}$ with coordinate system
$\set{x_i,\theta_j}$, $A$ a super Weil algebra and $x_A \in U_A$. To
any list of elements
\[
    \f = (f_1,\ldots,f_p,F_1,\ldots,F_q)
    \qquad f_i, F_j \in A
\]
there corresponds a $x_A$\nbd derivation
\[
    X_{\f} \colon \sheaf(U) \to A
\]
given by
\begin{equation} \label{eq:x_A-derivation}
    X_{\f}(s) = \sum_i f_i \, x_A \left( \pd{s}{x_i} \right) + \sum_j F_j \, x_A \left( \pd{s}{\theta_j} \right) \text{.}
\end{equation}
$X_{\f}$ is even (resp.\ odd) if and only if the $f_i$ are even
(resp.\ odd) and the $F_j$ are odd (resp.\ even). Moreover any
$x_A$\nbd derivation is of this form for a uniquely determined $\f$.
\end{lemma}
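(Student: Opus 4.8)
The plan is to verify directly, using the chart theorem (proposition \ref{prop:morphisms}) and Hadamard's lemma (lemma \ref{lemma:polynomials}), that every $x_A$\nbd derivation $X \colon \sheaf(U) \to A$ is of the form $X_\f$ in \eqref{eq:x_A-derivation} for a unique $\f$, and that the parity statement holds. The underlying principle is that a derivation of a polynomial (or Taylor-controlled) algebra is determined by its values on the coordinate generators.

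\medskip

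First I would check that $X_\f$ as defined in \eqref{eq:x_A-derivation} genuinely is an $x_A$\nbd derivation: this is a routine computation using that $s \mapsto x_A(\partial s/\partial x_i)$ and $s \mapsto x_A(\partial s/\partial \theta_j)$ are themselves $x_A$\nbd derivations (the composition of a derivation $\sheaf(U) \to \sheaf(U)$ with the algebra map $x_A$), and that an $A$\nbd linear combination of $x_A$\nbd derivations, with the sign bookkeeping dictated by the parities of the $f_i$ and $F_j$, is again an $x_A$\nbd derivation. The parity claim for $X_\f$ is immediate from \eqref{eq:x_A-derivation}: $\partial/\partial x_i$ is even and $\partial/\partial \theta_j$ is odd, so $X_\f$ has the parity of $f_i$ (even case) precisely when it also has the parity obtained from $F_j$ being odd, and symmetrically.

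\medskip

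For the converse and the uniqueness, suppose $X \colon \sheaf(U) \to A$ is any $x_A$\nbd derivation. Set $f_i \coloneqq X(x_i) \in A$ and $F_j \coloneqq X(\theta_j) \in A$; this forces the value of $\f$, giving uniqueness once we know $X = X_\f$. To see the equality, note that by lemma \ref{lemma:SWA} the kernel of $x_A \colon \stalk{M}{\red{x_A}} \to A$ contains some power $\maxid_{\red{x_A}}^k$, hence by the Leibniz rule $X$ kills $\maxid_{\red{x_A}}^{k+1}$ as well (a product of $k+1$ elements of $\maxid$, differentiated once, still has each summand containing a factor in $\maxid^k$ which $x_A$ annihilates). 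Therefore both $X$ and $X_\f$ factor through $\sheaf(U)/\maxid_{\red{x_A}}^{k+1}$, which by Hadamard's lemma is spanned by the monomials $x^\nu \theta^J$ in the coordinates; on such a monomial both $X$ and $X_\f$ give the same answer by repeated application of the Leibniz rule, since they agree on the generators $x_i,\theta_j$ and on constants (both vanish on $\K$). Hence $X = X_\f$ on this quotient, and so on all of $\sheaf(U)$.

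\medskip

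The step I expect to require the most care is the reduction to a finite-codimensional quotient: one must make precise that an $x_A$\nbd derivation, although not a derivation of $\sheaf(U)$ in the ordinary sense, still annihilates a suitable power of the maximal ideal at the base point, so that the formal Taylor argument of proposition \ref{prop:valori_assegnati} applies and only finitely many monomials are relevant. Everything else — well-definedness, $A$\nbd linearity of the correspondence $\f \mapsto X_\f$, and the parity bookkeeping — is routine. Passing from the superdomain case to the general one, as the text notes, is immediate by localization, and the two isomorphisms in the preceding proposition then follow, the second via $\stalk[T]{M}{\red{x_A}} = \stalk{M}{\red{x_A}} \otimes T_{\red{x_A}}(M)$ and the $\rho$\nbd tensor product construction with $\rho = x_A$.
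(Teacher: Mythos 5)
Your proposal is correct and follows essentially the same route as the paper: define $f_i = X(x_i)$, $F_j = X(\theta_j)$, and show $X - X_{\f}$ vanishes by combining Hadamard's lemma (reduction to polynomials in the coordinates), the Leibniz rule, and the fact that $x_A$ annihilates a power of $\maxid_{\red{x_A}}$ so that the error lands in $\nil{A}^k$. The only cosmetic difference is that you fix one $k$ and factor through a finite-codimensional quotient, whereas the paper lets $k$ be arbitrary and concludes $D(s) \in \bigcap_k \nil{A}^k = 0$; the substance is identical.
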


\begin{proof}
That $X_{\f}$ is a $x_A$\nbd derivation is clear. That the family
$\f$ is uniquely determined is also immediate from the fact that
they are the value of $X_{\f}$ on the coordinate functions.

Let now $X$ be a generic $x_A$\nbd derivation. Define
\begin{align*}
    f_i &= X(x_i) \text{,} &
    F_j &= X(\theta_j) \text{,}
\end{align*}
and
\[
    X_{\f} = f_i \, x_A \circ \pd{}{x_i} + F_j \, x_A \circ \pd{}{\theta_j} \text{.}
\]
Let $D = X-X_{\f}$. Clearly $D(x_i) = D(\theta_j) = 0$. We now show
that this implies $D=0$. Let $s\in\sheaf(U)$. Due to lemma
\ref{lemma:polynomials}, for each $x \in U$ and for each integer
$k\in\N$ there exists a polynomial $P$ in the coordinates such that
$\germ{s}_x - \germ{P}_x \in \maxid_x^{k+1}$. Due to Leibniz rule
$D(s - P) \in \nil{A}^k$ and, since clearly $D(P)= 0$, $D(s)$ is in
$\nil{A}^k$ for arbitrary $k$. So we are done.
\end{proof}

The previous result gives the following corollary.

\begin{corollary}
We have the identification
\[
    T_{x_A} M_A
    \isom \big( A \otimes T_{\red{x_A}}(M) \big)_0
    \isom \big( A \otimes_{x_A} \stalk[T]{M}{\red{x_A}} \big)_0 \text{.}
\]
\end{corollary}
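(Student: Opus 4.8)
The plan is to read the corollary off from the two immediately preceding propositions by passing to even parts. First I would invoke the proposition identifying the classical tangent space $T_{x_A} M_A$ of the $A_0$\nbd manifold $M_A$ with $\Der_{x_A}\big(\sheaf(M),A\big)_0$; this already carries the main content, since by the dual numbers construction of observation \ref{obs:tangent_bundle} together with the transitivity theorem \ref{theor:transitivity} a tangent vector at $x_A$ is exactly a parity-preserving (hence even) $x_A$\nbd derivation.

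Next I would recall the proposition giving the chain of left $A$\nbd module isomorphisms
\[
    \Der_{x_A}\big(\sheaf(M),A\big) \isom A \otimes T_{\red{x_A}}(M) \isom A \otimes_{x_A} \stalk[T]{M}{\red{x_A}} \text{.}
\]
The one point that needs checking --- and it is essentially already contained in the lemma proving that proposition --- is that these isomorphisms are homogeneous of degree zero, i.e.\ isomorphisms of $\Z_2$\nbd graded modules. Indeed, the lemma shows that the derivation $X_{\f}$ attached to $\f = (f_1,\dots,f_p,F_1,\dots,F_q)$ is even precisely when $f_i \in A_0$ and $F_j \in A_1$, which is exactly the condition that $\sum_i f_i \otimes \partial/\partial x_i + \sum_j F_j \otimes \partial/\partial\theta_j$ lie in $\big(A \otimes T_{\red{x_A}}(M)\big)_0$; and the $x_A$\nbd tensor product $A \otimes_{x_A} \stalk[T]{M}{\red{x_A}}$ is formed from $A \otimes T_{\red{x_A}}(M)$ in a parity-preserving manner. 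Hence the degree-zero parts of the displayed isomorphisms are $A_0$\nbd module isomorphisms.

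Combining the two, I would conclude
\[
    T_{x_A} M_A \isom \Der_{x_A}\big(\sheaf(M),A\big)_0 \isom \big(A \otimes T_{\red{x_A}}(M)\big)_0 \isom \big(A \otimes_{x_A} \stalk[T]{M}{\red{x_A}}\big)_0 \text{,}
\]
which is the claim. As everything here is local and described explicitly in coordinates, there is no gluing to perform and no real obstacle; the only thing to keep straight is the bookkeeping of the $\Z_2$\nbd gradings and of the two distinct roles of the word ``tangent'' --- the classical tangent space of the $A_0$\nbd manifold $M_A$ on the one hand, and the super tangent space, respectively the super tangent sheaf, of $M$ at $\red{x_A}$ on the other --- which is routine rather than hard.
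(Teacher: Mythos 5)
Your proposal is correct and follows exactly the paper's (implicit) argument: the corollary is obtained by combining the proposition identifying $T_{x_A}M_A$ with $\Der_{x_A}\big(\sheaf(M),A\big)_0$ with the graded $A$\nbd module isomorphisms $\Der_{x_A}\big(\sheaf(M),A\big) \isom A \otimes T_{\red{x_A}}(M) \isom A \otimes_{x_A} \stalk[T]{M}{\red{x_A}}$ and passing to even parts. Your explicit check that the isomorphisms respect the $\Z_2$\nbd grading (via the parity condition on $\f$ in the lemma) is a sound way to make the routine bookkeeping precise.
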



\end{document}